\newcommand{\threeone}{\frac{\partial t_3}{\partial t_1}}
\newcommand{\threetwo}{\frac{\partial t_3}{\partial t_2}}
\newcommand{\zeroone}{\frac{\partial t_0}{\partial t_1}}
\newcommand{\zerotwo}{\frac{\partial t_0}{\partial t_2}}
\newcommand{\Bone}{\frac{\partial B}{\partial x_1}}
\newcommand{\Btwo}{\frac{\partial B}{\partial x_2}}
\newcommand{\Bthree}{\frac{\partial B}{\partial x_3}}
\newcommand{\half}{\frac{1}{2}}
\newcommand{\R}{{\mathbb R}}
\newcommand{\T}{{\mathcal T}}
\def\sign{\operatorname{sign}}
\def\const{\operatorname{const}}
\def\Dom{\operatorname{Dom}}
\newcommand{\ci}[1]{_{{}_{\scriptstyle{#1}}}}
\newcommand{\Bel}{\mathbf B}
\newcommand{\av}[2]{\langle #1\rangle\ci {#2}}
\newcommand{\ve}{\varepsilon}
\newtheorem{theorem}{Theorem}
\newtheorem{lemma}[theorem]{Lemma}
\theoremstyle{definition}
\theoremstyle{remark}
\numberwithin{equation}{section}
\begin{document}
\thispagestyle{empty}

\title[Monge--Amp\`ere equation for Carleson Embedding Theorems]
{{Monge--Amp\`ere equation and Bellman optimization of Carleson Embedding
Theorems}}
\author{Vasily Vasyunin}\address{Vasily Vasyunin, V. A. Steklov Math. Inst., St Petersburg
\newline{\tt vasyunin@pdmi.ras.ru}}
\author{Alexander Volberg}\address{Alexander Volberg,
Department of  Mathematics, Michigan State University and the University of Edinburgh
{\tt volberg@math.msu.edu}\,\,and\,\,{\tt a.volberg@ed.ac.uk}}

\thanks{Research of the authors was supported in part by NSF grants  DMS-0501067 (Volberg) }
\subjclass{Primary: 28A80.  Secondary: 28A75,        60D05}

\begin{abstract}
Monge--Amp\`ere equation plays an important part in Analysis. For example, it
is instrumental in mass transport problems. On the other hand, the Bellman function technique appeared recently
as a way to consider certain Harmonic Analysis problems as the problems of
Stochastic Optimal Control. This brings us to Bellman PDE, which in stochastic
setting is often a Monge--Amp\`ere equation or its close relative. We explore
the way of solving Monge--Amp\`ere equation by a sort of method of
characteristics to find the Bellman function of certain classical Harmonic
Analysis problems, and, therefore, of finding full structure of sharp constants
and extremal sequences for those problems.
\end{abstract}

\maketitle

\section{{\bf Introduction}}

The goal of this article is to show how Monge--Amp\`ere equation allows us to
solve a certain class of Harmonic Analysis problems. We choose two problems to
illustrate the method: John--Nirenberg inequality (JNI) and Carleson embeddding
theorem (CET).

In these problems we determine Bellman function of the problem (see later).
Bellman function carries all the information about the problem: sharp constant,
construction of extremal functions/extremal sequences; and sometimes it also
carries auxiliary information.

In both JNI and CET Bellman functions were found before: in \cite{SV} for JNI
and for CET in \cite{M}. However, the way they were found is rather specific
for each problem and not ``from basic principles" so-to-speak.

We propose here a universal method which fits many problems (we count at least
seven of them). Among those we chose JNI and CET bacause of their classical
nature and because the reader can easily compare our approach and our Bellman
functions with those in the literature \cite{SV}, \cite{M}. Our approach will
differ completely from the previous ones, the functions will coincide.

We borrowed the Monge--Amp\`ere solution by ``the method of characteristics"
from the literature, and the most important inspiration came from Slavin and
Stokolos  \cite{SlSt}.

Let us notice that the aim of the paper is not only to present a new and
unified approach to finding Harmonic Analysis Bellman functions. In fact, for
CET we plan to prove some new results, not just to reprove the old ones due to
\cite{M}.

\subsection{Bellman functions}
\label{bf}

The method of Bellman functions in Harmonic Analysis appeared probably in the 80's  at the series of papers of Burkholder devoted to sharp $L^p$-estimates   of martingale transform. It appeared there under certain disguise (and Bellman function and Bellman PDE were not even mentioned). The method was rediscovered in 1995 in the first preprint version of \cite{NTV1}. It was very much developed in \cite{NT}. The method turned out to be useful. Sometimes it solves certain important Harmonic Analysis problems that do not have an alternative (classical) approach so far, see e.g. \cite{PVo}.
There is now certain amount of literature on Bellman function technique. We note here an excellent paper \cite{M}, which serves as our inspiration here, and two small surveys: \cite{NTV2} and \cite{Vo}.

We fix $\ve\in (0,1)$ and an interval $I$. Subintervals of $I$ will be called
$J$. Here is what is called in \cite{SV} the Bellman function for the
John--Nirenberg inequality:

\begin{multline}
\Bel_{JNI}(x_1,x_2;\ve):=\sup\big\{\av{e^\phi}I\colon\ \av\phi I= x_1,\
\av{\phi^2}I=x_2,
\\
\av\phi J^2\le\av{\phi^2}J\le\av\phi J^2+\ve^2\quad\forall J\subset I \big\}\,.
\label{jnibel}
\end{multline}

Function $\Bel_{JNI}$ is defined in
\begin{equation}
\Omega_{JNI}:= \{x=(x_1,x_2)\colon\ x_1^2 \le x_2 \le x_1^2 +\ve^2\}\,.
\label{jnidom}
\end{equation}

Now given any $I$ we consider its dyadic lattice of subintervals $D=D(I)$ (it
consists of left and right halves (sons) of $I$ and then all left and right
grandsons and et cetera). Let $\mu$ denote a non-negative measure on $I$
without atoms. We want to write the Bellman function for the Carleson Embedding
Theorem (dyadic).

\begin{multline}
\Bel_{CET}(x_1,x_2,x_3;m,M):=\sup\frac1{|I|}\Big\{\int_I
|\phi(t)|^2\,d\mu(t)\,+\!\!\sum_{J\in D(I)} |\,\av{\phi}J|^2 \alpha_J\colon
\\
\av{\phi}I=x_1,\quad \av{\phi^2}I=x_2,\quad
\frac1{|I|}\Bigl(\mu(I)\,+\!\!\sum_{J\in D(I)}\alpha_J\Bigr)=x_3,
\\
m|J|\le\mu(J)\,+\!\!\sum_{\ell\in D(J)}\alpha_{\ell}\le M|J|\quad\forall J\in
D(I)\Big\}\,. \label{cetbel}
\end{multline}

Functions $\Bel_{CET}$ is defined in
\begin{equation}
\Omega_{CET}:= \{x=(x_1,x_2,x_3)\colon\ x_1^2 \le x_2 ,\ m\le x_3\le M\}\,.
\label{cetdom}
\end{equation}

We introduce also $\Bel(x_1,x_2,x_3)$. This is what is often called the Bellman function for the Carleson Embedding Theorem, see \cite{M}, \cite{NT}.

\begin{multline}
\label{Bel}
\Bel(x_1,x_2,x_3):=\sup\frac1{|I|}\Bigl(
\!\!\sum_{J\in D(I)} |\,\av{\phi}J|^2 \alpha_J\colon
\\
\av{\phi}I=x_1,\quad \av{\phi^2}I=x_2,\quad
\frac1{|I|}\Bigl(\!\!\sum_{J\in D(I)}\alpha_J\Bigr)=x_3,
\\
\!\!\sum_{\ell\in D(J)}\alpha_{\ell}\le |J|\quad\forall J\in
D(I)\Big\}\,.
\end{multline}

\noindent{\bf Remark.} The reader can easily see that the definitions of all
Bellman functions $\Bel_{CET},\Bel$ does not depend on the interval $I$.

The formula for the function $\Bel_{JNI}(x_1,x_2)$ was found in~\cite{V} and~\cite{SV},
the formula for $\Bel(x_1,x_2,x_3)$ was found in~\cite{M}, the general
formula for $\Bel_{CET}(x_1,x_2,x_3;m,M)$ below is new. Let us also notice that
formula for $\Bel(x_1,x_2,x_3)$ was found in~\cite{M} from intricate combinatorial consideration. We tried to explain in the present paper a natural way to find this formula from ``the basic principles".

Namely, all formulae will be
obtained here by a unified method of solving the Monge--Amp\`ere equation.

\begin{theorem}
\label{bdelta}
$$
\Bel_{JNI}(x_1,x_2;\ve) =
\frac{e^{-\ve}}{1-\ve}\left(1-\sqrt{\ve^2-(x_2-x_1^2)}\right)
e^{x_1+\sqrt{\ve^2 -(x_2-x_1^2)}}\,.
$$
\end{theorem}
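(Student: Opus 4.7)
The plan is to verify the stated formula for $\Bel_{JNI}(x_1,x_2;\ve)$ by establishing two matching inequalities against the candidate
$$B(x_1,x_2):=\frac{e^{-\ve}}{1-\ve}\bigl(1-\sqrt{\ve^2-(x_2-x_1^2)}\bigr)e^{x_1+\sqrt{\ve^2-(x_2-x_1^2)}}\,.$$
The derivation of $B$ itself would proceed by the method of characteristics for the degenerate Monge--Amp\`ere equation $\det\mathrm{Hess}\,B=0$; this equation is forced in the interior because the main inequality $B(x)\ge\half(B(x^+)+B(x^-))$ must saturate along the optimal direction of dyadic splitting, making the Hessian have a non-trivial kernel. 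One then looks for a foliation of $\Omega_{JNI}$ by chords along which $B$ is affine. Parametrizing the chords by their endpoints $(t,t^2)$ on the lower parabola $x_2=x_1^2$---where the only admissible test function is $\phi\equiv t$, forcing $B(t,t^2)=e^t$---and writing the chord direction as $(1,u(t))$ for an unknown slope $u$, the matching condition at the upper parabola $x_2=x_1^2+\ve^2$ yields a first-order ODE for $u(t)$. Integrating this ODE and propagating $B$ affinely along each chord produces the announced closed form, with the prefactor $e^{-\ve}/(1-\ve)$ appearing as the normalization forced at the upper boundary.

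With $B$ in hand, the upper bound $B\ge\Bel_{JNI}$ follows from a direct verification that $B$ is concave on $\Omega_{JNI}$---a routine Hessian computation in which the determinant vanishes while the trace is non-positive. Concavity together with the identity $B(x_1,x_1^2)=e^{x_1}$ yields, by induction on the dyadic tree $D(I)$, the telescoping estimate
$$|J|\,B(\av{\phi}J,\av{\phi^2}J)\ge\int_J e^{\phi(t)}\,dt$$
for every admissible $\phi$ and every $J\in D(I)$; taking $J=I$ and passing to the supremum gives $\Bel_{JNI}\le B$. For the reverse inequality I would build extremal sequences $\phi_n$ by splitting, at each dyadic step, the current point along the characteristic chord through it into two symmetric points still inside $\Omega_{JNI}$, iterating to depth $n$, and freezing to the endpoint constants on each leaf; the resulting $\phi_n$ are admissible and satisfy $\av{e^{\phi_n}}{I}\to B(x_1,x_2)$.

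The main obstacle is the derivation step: guessing the correct \emph{ansatz} for the chord directions, setting up the compatibility condition at the upper parabola so that the ODE for the slope function is explicitly integrable, and then inverting the implicit parametrization so that $B$ can be written in closed form directly in $(x_1,x_2)$ rather than along the characteristics. Once the foliation is identified, the subsequent concavity verification and the construction of extremal sequences are essentially mechanical.
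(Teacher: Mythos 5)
Your overall scheme --- derive a candidate $B$ by solving the degenerate Monge--Amp\`ere equation via characteristics, then establish $B\ge\Bel$ by concavity and $B\le\Bel$ by extremal sequences --- is the same four-step plan the paper outlines in Section~\ref{plan}. But two points need flagging.

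First, the paper does not actually prove Theorem~\ref{bdelta}: Section~\ref{MA} carries out only Steps I--II (the heuristic derivation of the family $B_\delta$), explicitly deferring Steps III--IV to \cite{SV} and \cite{V}. So your proposal is more ambitious than what this paper does; the derivation part is comparable (the paper pins down the ODE for the characteristics from the homogeneity relation~\eqref{hm1} rather than a ``matching condition at the upper parabola,'' but both are heuristics that lead to the same tangent-line foliation of $\Omega_{JNI}$).

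Second, and this is a genuine gap, your Step III does not go through as written. The domain $\Omega_{JNI}=\{x_1^2\le x_2\le x_1^2+\ve^2\}$ is \emph{not convex} (the upper constraint $x_2\le x_1^2+\ve^2$ is the hypograph of a convex function). Local concavity of $B$, i.e.\ $d^2B\le 0$, therefore does \emph{not} directly yield the discrete inequality $B(x)\ge\tfrac12\big(B(x^+)+B(x^-)\big)$ for all admissible triples $x,\,x^\pm\in\Omega_{JNI}$ with $x=\tfrac12(x^++x^-)$: the segment $[x^-,x^+]$ can escape $\Omega_{JNI}$ through the upper parabola even when its two endpoints and its midpoint lie in $\Omega_{JNI}$. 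Concretely, along a line segment the quantity $q(t):=x_2(t)-x_1(t)^2$ is a concave parabola of $t$, and one can choose parameters so that $q(-1)=0$, $q(0)=\ve^2-\eta$, $q(1)=\ve^2$ while $q$ has an interior maximum exceeding $\ve^2$ for $\eta$ small. The paper explicitly warns of this (``The fact that $\Omega$ is not convex makes step III rather delicate.''), and the resolution in \cite{SV} requires an argument that goes beyond the routine Hessian check plus dyadic telescoping you sketch. Your plan needs either a proof that $B$ satisfies the discrete inequality directly (not merely $d^2B\le0$), or the more careful chain argument used in \cite{SV}, and in the non-dyadic setting of~\eqref{jnibel} even the correct choice of $\delta$ versus $\ve$ is nontrivial --- the paper notes it differs between the dyadic and non-dyadic problems.
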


\begin{theorem}
\label{bc01} Consider
\begin{equation}
\label{eqcet101} B(x_1,x_2,x_3)=
\frac{x_3x_2}{\bigl[1-2a\bigr]^2\bigl[1-4a(1-x_3)\bigr]} \,,
\end{equation}
where $a= a(x)$ is the unique solution of the cubic equation
\begin{equation}
\label{cubic01}
\frac{x_1^2}{x_2} = \left[  \frac{1-2a(1-x_3)}{1-2a} \right]^2
\frac{1-4a}{1-4a(1-x_3)}
\end{equation}
on the interval $[0,\frac1{4}]$. Then
$$
\Bel(x) =
B(x)\quad x_1^2 \le x_2,\quad 0<x_3\le 1\,.
$$
\end{theorem}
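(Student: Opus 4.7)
The plan follows the Bellman function paradigm combined with the Monge--Amp\`ere machinery advocated throughout this paper. The starting point is the main inequality inherited by $\Bel$ from the dyadic splitting of $I$: writing $a=\alpha_I/|I|$ and $x^\pm$ for the Bellman variables on the two halves $I_\pm$, one has
\[
\Bel(x)\;\ge\;a\,x_1^2+\half[\Bel(x^+)+\Bel(x^-)],
\]
whenever $x_1=\half(x_1^++x_1^-)$, $x_2=\half(x_2^++x_2^-)$, $x_3=a+\half(x_3^++x_3^-)$ with $x^\pm$ admissible and $a\in[0,1]$. Any function obeying this inequality and matching the correct boundary data on $\{x_1^2=x_2\}\cup\{x_3=1\}$ is a majorant of $\Bel$; conversely, splittings that achieve near-equality give the reverse bound.

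In its infinitesimal form this requirement becomes, after optimization in $a$, a degenerate homogeneous Monge--Amp\`ere equation $\det D^2 B=0$ in the interior of the domain, coupled with a sign condition on $D^2 B$. I would solve it by the method of characteristics: the graph of $B$ is ruled by line segments along which $B$ is linear, and the foliation is naturally parametrized by $a\in[0,\tfrac14]$. The cubic \eqref{cubic01} would then be identified as the compatibility condition selecting the unique characteristic through a given interior point $x$, and the formula \eqref{eqcet101} would be recovered by integrating along the characteristic, using the boundary data $B(x_1,x_1^2,x_3)=x_3\,x_1^2$ on the parabolic face $x_2=x_1^2$ (where necessarily $\phi\equiv x_1$) and matching Melas' formula on the face $x_3=1$.

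With the candidate $B$ in hand, the inequality $B\ge\Bel$ reduces to checking the full main inequality for the explicit $B$. The characteristic structure allows one to parametrize the splitting $(x^+,x^-)$ along the ruling direction, collapsing the multivariable concavity check to a one-dimensional inequality where the degenerate Hessian is genuinely negative semi-definite by construction. A telescoping induction over $D(I)$ then gives $\Bel(x)\le B(x)$ for every admissible dyadic datum.

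The reverse inequality $B\le\Bel$ is obtained through explicit extremal sequences: for each $x$ one walks along the characteristic toward the boundary, at each binary step recording the optimal $(a,x^\pm)$ and translating this into concrete choices of $\phi$ and of Carleson weights $\alpha_J$ on a finite dyadic subtree; equality is achieved at each node, and the limit yields $\Bel(x)\ge B(x)$. The main obstacle I anticipate is the concavity verification in the previous step: one must differentiate implicitly through \eqref{cubic01}, keep track of the correct root in $[0,\tfrac14]$, and show that $D^2 B$ has precisely the signature dictated by the characteristic foliation. Controlling the signs of the partial derivatives $\partial a/\partial x_i$ and the dependence of the Hessian's null direction on $a$ is the technical heart of the computation, even though the Monge--Amp\`ere framework reduces it from a genuinely multivariable check to an essentially one-parameter one.
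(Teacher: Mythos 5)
Your outline reproduces the paper's own four-part roadmap, and the Monge--Amp\`ere / characteristics framework is exactly theirs, so the skeleton is right. But the extremal-sequence step as you describe it would not go through, and that is a genuine gap.

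You propose that ``for each $x$ one walks along the characteristic toward the boundary, at each binary step recording the optimal $(a,x^\pm)$ \dots equality is achieved at each node.'' The trouble is that if $x$ sits on the upper lid $\{x_3=1\}$ (and the theorem must be established there first, then transported along the rulings to $x_3<1$ by linearity of $B$ on each characteristic), one cannot keep \emph{both} children $x^\pm$ on the characteristic through $x$: moving one way along the ruling lowers $x_3$ toward the terminal corner $\{x_2=x_1^2,\ x_3=0\}$ where $\phi$ is forced constant (this is fine), but moving the other way pushes $x_3>1$, out of $\Omega$. So the greedy walk breaks on one side at the very first step. The paper's construction repairs this with a geometric observation about the upper-lid foliation: the horizontal projection of each ruling onto $\{x_3=1\}$ is tangent to a parabola $P_A=\{x_2=Ax_1^2\}$ through the intersection point, and along $P_A$ extremal test data rescale multiplicatively. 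Thus instead of following the characteristic out of $\Omega$, one projects back onto the upper lid and rescales. That is precisely what produces the self-similar $\phi_n$ with its two constants, $c_n$ (the terminal constant value at the bottom of the characteristic) and $d_n$ (the multiplicative adjustment on one dyadic half coming from the parabolic rescaling). Without this additional idea the lower bound $B\le\Bel$ does not close.

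Two smaller remarks. The concavity verification you defer is indeed the technical heart, but the paper does not simply differentiate implicitly through \eqref{cubic01}: it decomposes the Hessian into $2\times2$ blocks with transparent signs, uses $\det d^2B=0$ together with the explicit null direction, and separately establishes $\partial B/\partial x_3\ge x_1^2$, which is essential to absorb the extra term $x_1^2\,[x_3-\half(x_3^++x_3^-)]$ in the discrete main inequality when $\alpha_I>0$. Finally, $\Bel$ in \eqref{Bel} involves no boundary measure $\mu$, while the concavity argument naturally majorizes $\Bel_{CET}(\cdot;0,1)$, which allows $\mu$; these need not coincide a priori, and the paper sandwiches $\Bel\le\Bel_{CET}(\cdot;0,1)\le B\le\Bel$. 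Your proposal does not register that distinction.
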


\begin{theorem}
\label{bc} Consider
\begin{equation}
\label{eqcet1} B(x_1,x_2,x_3;m,M)=
\frac{(x_3-m)x_2}{\bigl[1-2a(M-m)\bigr]^2\bigl[1-4a(M-x_3)\bigr]}+mx_2 \,,
\end{equation}
where $a= a(x)$ is the unique solution of the cubic equation
\begin{equation}
\label{cubic}
\frac{x_1^2}{x_2} = \left[  \frac{1-2a(M-x_3)}{1-2a(M-m)} \right]^2
\frac{1-4a(M-m)}{1-4a(M-x_3)}
\end{equation}
on the interval $[0,\frac1{4(M-m)}]$. Then
$$
\Bel_{CET}(x;m,M) =
\begin{cases}
B(x;m,M)\quad& x_1^2 \le x_2,\quad m<x_3\le M
\\
\qquad mx_2 &x_1^2 \le x_2,\quad x_3=m.
\end{cases}
$$
\end{theorem}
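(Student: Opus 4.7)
\textbf{Proof proposal for Theorem~\ref{bc}.}
The plan is to follow the standard Bellman-function scheme refracted through the Monge--Amp\`ere viewpoint of this paper: derive the dynamic programming inequality satisfied by $\Bel_{CET}$, pass to its infinitesimal form (a degenerate Monge--Amp\`ere equation), solve that PDE by the method of characteristics in the spirit of Slavin--Stokolos \cite{SlSt} to produce the explicit candidate \eqref{eqcet1}--\eqref{cubic}, and finally prove both inequalities $\Bel_{CET}\le B$ and $\Bel_{CET}\ge B$ by direct verification.

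The starting point is the dyadic splitting. Fix the ambient interval $I$ and split it into its two halves $I_\pm$. Gluing admissible pairs on $I_\pm$ and choosing freely the extra weights $\alpha_{I_\pm}\ge 0$ (subject to the Carleson constraints), the definition~\eqref{cetbel} yields
\begin{equation*}
\Bel_{CET}(x;m,M)\ge\tfrac12\bigl[\Bel_{CET}(x^+;m,M)+\Bel_{CET}(x^-;m,M)\bigr]+\tfrac12\bigl[\beta_+(x_1^+)^2+\beta_-(x_1^-)^2\bigr],
\end{equation*}
where $\beta_\pm=2\alpha_{I_\pm}/|I|\ge 0$, $x_1=\tfrac12(x_1^++x_1^-)$, $x_2=\tfrac12(x_2^++x_2^-)$, $x_3=\tfrac12(x_3^++x_3^-)+\tfrac12(\beta_++\beta_-)$, and $m\le x_3^\pm\le M$. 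This is the main inequality any candidate for $\Bel_{CET}$ must satisfy; the boundary data $B|_{x_2=x_1^2}=x_1^2 x_3$ and $B|_{x_3=m}=m x_2$ follow immediately from~\eqref{cetbel}.

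Taylor-expanding the main inequality as $x^+-x^-\to 0$ and $\delta:=\tfrac12(\beta_++\beta_-)\to 0$ turns it into pointwise PDE conditions: the Hessian of $B$ is nonpositive in admissible directions and $\partial_{x_3}B\ge x_1^2$. Extremality forces simultaneous equality in some of these, producing a degenerate Monge--Amp\`ere equation. Its characteristics form a one-parameter family of leaves foliating $\Omega_{CET}$; the natural parameter is precisely $a\in[0,1/(4(M-m))]$. Integrating the ODE along each leaf and using the boundary data to fix the constants yields the closed form~\eqref{eqcet1}, while the requirement that a given $x$ lie on the leaf with parameter $a$ becomes the cubic~\eqref{cubic}. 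Uniqueness of its root in the stated interval follows from monotonicity in $a$ of the right-hand side of~\eqref{cubic}.

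With $B$ in hand, the upper bound $\Bel_{CET}\le B$ follows by checking directly from the formula that $B$ satisfies the main inequality globally (not merely infinitesimally), and iterating down the dyadic tree to the boundary using the matching boundary data. The lower bound $B\le\Bel_{CET}$ is obtained by constructing extremal test sequences: start at the boundary and march along the characteristic leaf through $x$, at each dyadic scale selecting $\phi$, $\mu$ and the $\alpha_J$ so as to saturate the main inequality at leading order; in the limit these sequences realize $B(x)$. The degenerate case $x_3=m$ forces $\mu=m\,dx$ and all $\alpha_J=0$, producing the functional value $m x_2$ directly. The principal difficulty lies in the middle step: identifying the correct foliation of $\Omega_{CET}$ and the algebraic change of variables that reduces the equation selecting the characteristic through $x$ to the cubic~\eqref{cubic}; once these are in place, the remaining verifications are computations guided by the explicit structure of the solution.
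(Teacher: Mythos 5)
Your proposal follows the paper's own scheme essentially step for step: the dyadic dynamic-programming inequality, its infinitesimal degenerate Monge--Amp\`ere form, the method of characteristics producing the explicit $B$ and the cubic \eqref{cubic} (with uniqueness of the root coming from monotonicity of the right-hand side in $a$), the upper bound by iterating the main inequality down the dyadic tree, and the lower bound via self-similar extremal sequences built along the characteristic leaves. One detail worth flagging: you write ``checking directly from the formula that $B$ satisfies the main inequality globally,'' but the paper does not (and cannot feasibly) verify it by brute force from the implicit formula --- instead it proves separately that $d^2B\le 0$ (using $\det d^2B=0$ by construction together with sign checks on the $2\times2$ minors) and that $\partial_{x_3}B\ge x_1^2$, then assembles the main inequality from these two facts by a one-dimensional convexity argument along a broken segment and iterates it via a Green-type formula on the dyadic tree; also note the paper charges the gain $\alpha_I/|I|$ to the parent node $I$, i.e.\ $x_3-\tfrac12(x_3^++x_3^-)=\alpha_I/|I|$ with gain $x_1^2[x_3-\tfrac12(x_3^++x_3^-)]$, whereas your $\beta_\pm$ bookkeeping attaches the weights of the children to the split, which does not mesh cleanly with the recursive definition of $x_3^\pm$.
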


\bigskip

\noindent{\bf Remark.} The cubic equation \eqref{cubic} has actually sometimes
two solutions  and sometimes one solution (for the left hand side in $[0,1)$).
The one (called $a(x)$ above) is always in $[0,\frac1{4(M-m)}]$. Another one
(if exists) is negative. We will call it $a^-=a^-(x)$. We will see now that it
is also responsible for the meaningful and interesting extremal problem.

\bigskip

\subsection{Lower Bellman function.}
\label{minimizing}

 If we
denote the Bellman function~\eqref{cetbel} by $\Bel_{\max}$ and introduce
another Bellman function
\begin{multline}
\Bel_{\min}(x_1,x_2,x_3;m,M):=\inf\Big\{\int_I
|\phi(t)|^2\,d\mu(t)\,+\!\!\sum_{J\in D(I)} |\,\av{\phi}J|^2 \alpha_J\colon
\\
\av{\phi}I=x_1,\quad \av{\phi^2}I=x_2,\quad
\frac1{|I|}\Bigl(\mu(I)\,+\!\!\sum_{J\in D(I)}\alpha_J\Bigr)=x_3,
\\
m|J|\le\mu(J)\,+\!\!\sum_{\ell\in D(J)}\alpha_{\ell}\le M|J|\quad\forall J\in
D(I)\Big\}\,,
\label{cetbelmin}
\end{multline}
then we can show that $a=a^-$ corresponds to this extremal problem. 

\begin{theorem}
\label{bcmin} Consider
\begin{equation}
\label{eqcet1min} B(x_1,x_2,x_3;m,M)=
\frac{(x_3-m)x_2}{\bigl[1-2a(M-m)\bigr]^2\bigl[1-4a(M-x_3)\bigr]}+mx_2 \,,
\end{equation}
where $a= a^-(x)$ is the unique negative solution of the cubic equation
$$
\frac{x_1^2}{x_2} = \left[  \frac{1-2a(M-x_3)}{1-2a(M-m)} \right]^2
\frac{1-4a(M-m)}{1-4a(M-x_3)}\,.
$$
Then
$$
\Bel_{\min}(x;m,M) =
\begin{cases}
B(x;m,M)\quad& x_1^2 \le x_2,\quad \displaystyle
M-(M-m)\frac{x_1^2}{x_2}<x_3\le M \rule[-15pt]{0pt}{15pt}
\\
\qquad mx_2&x_1^2 \le x_2,\quad \displaystyle m\le x_3\le
M-(M-m)\frac{x_1^2}{x_2}\,.
\end{cases}
$$
\end{theorem}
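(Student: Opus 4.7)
The plan is to mirror the Monge--Amp\`ere / method of characteristics proof of Theorem \ref{bc}, but tracking the second (negative) branch of the characteristic system. Conceptually, $\Bel_{\min}$ satisfies a Bellman inequality opposite in direction to that of $\Bel_{\max}$: along any admissible dyadic splitting $x=\tfrac12(x^++x^-)$ any candidate $B$ must obey
\begin{equation*}
B(x)\;\le\;\tfrac12 B(x^+)+\tfrac12 B(x^-)+(\text{gain produced at this step})\,,
\end{equation*}
which on the infinitesimal level still forces the homogeneous Monge--Amp\`ere equation $\det(\operatorname{Hess} B)=0$, but with convexity rather than concavity along the averaging directions.

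Running the characteristics exactly as for Theorem \ref{bc} yields a one-parameter family of candidate surfaces; elimination of the parameter produces the same cubic \eqref{cubic}. A short analysis of its right-hand side as a function of $a$ (for fixed $x$) shows that it has a local maximum equal to $1$ at $a=0$, decreases on both sides, and tends to $(M-x_3)/(M-m)$ as $a\to -\infty$. The positive root was used in Theorem \ref{bc}; the negative root $a^-$ therefore exists precisely on the region $x_3>M-(M-m)x_1^2/x_2$, and substituting it into the candidate formula gives \eqref{eqcet1min}. Its Hessian has the opposite sign along the averaging directions, making it the natural minimum candidate. On the boundary $x_1^2=x_2$, the cubic forces $a^-=0$ and the formula reduces to $x_3x_2$, matching the trivial evaluation for a constant test function. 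On the obstacle $x_3=m$ the formula collapses to $mx_2$, which is the natural lower bound $\int_I|\phi|^2\,d\mu\ge m\int_I|\phi|^2\,dt=mx_2|I|$. Along the interior switching surface $x_3=M-(M-m)x_1^2/x_2$ one has $a^-\to -\infty$, and expansion of the denominator shows that the candidate also tends to $mx_2$, so the two branches of the formula in Theorem \ref{bcmin} paste continuously; the required $C^1$ matching of gradients across the surface must be checked directly.

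With the candidate in place, I would close the argument in the standard two steps. For the lower bound $\Bel_{\min}\ge B$, the verified reversed Bellman inequality, applied inductively along the dyadic tree, bounds the functional from below by $B(x_1,x_2,x_3;m,M)$ for any admissible $\phi,\mu,\{\alpha_J\}$. For the matching upper bound $\Bel_{\min}\le B$, I would integrate the characteristic ODE backwards from a generic point $x$: the trajectories yield explicit choices of $\phi$, $\mu$ and weights $\alpha_J$ on a sequence of refined dyadic subtrees which realize $B(x)$ up to $o(1)$ as the refinement depth grows, exactly parallel to the construction of extremizers in Theorem \ref{bc}.

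The main obstacle I anticipate is the global verification of the reversed Bellman inequality across the switching surface $x_3=M-(M-m)x_1^2/x_2$: each branch is a smooth Monge--Amp\`ere solution with the correct local behavior, but the glued function must still satisfy the infimum inequality for admissible splittings whose endpoints $x^\pm$ lie on opposite sides of the surface. Together with this, one has to track carefully the signs of the factors $1-2a(M-m)$ and $1-4a(M-x_3)$ when $a=a^-<0$ (where each is greater than $1$, so $B$ stays finite and the term $(x_3-m)x_2/([\cdots]^2[\cdots])$ remains non-negative) to confirm that $B$ is well defined on the whole region of interest. Once that geometric piece is in place, the remainder is the same Bellman-function bookkeeping used for Theorem \ref{bc}.
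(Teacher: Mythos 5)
Your outline is essentially the proof the paper itself invites: the authors explicitly state in the ``Discussion'' following Theorem \ref{bcmin} that they only prove Theorem \ref{bc} in detail and ``leave for the reader to fill out the details of the proof of Theorem \ref{bcmin}, where concavity should be replaced by convexity.'' Your proposal is precisely that transfer --- same characteristics, same cubic, negative branch $a^-$, reversed sense of the Hessian, reversed boundary inequality $\partial B/\partial x_3 \le x_1^2$ (which follows immediately from $a^-<0 \Rightarrow 1-2a(M-x_3)>1$ in the analogue of Theorem \ref{Bx3}), and the $C^1$ pasting across $x_3 = M-(M-m)x_1^2/x_2$ onto the linear piece $mx_2$; once the two convex pieces match in $C^1$, piecewise convexity upgrades to global convexity, so the concern you raise about splittings straddling the switching surface is resolved. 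Your domain analysis of the cubic (local max $1$ at $a=0$, limit $(M-x_3)/(M-m)$ at $a\to-\infty$) matches the paper's own description in the text following \eqref{cubicc}. This is the same approach the paper has in mind, carried out with reasonable care.
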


\medskip

\noindent{\bf Discussion.}
\begin{itemize}
\item 1. We are going to provide a detailed proof of Theorem \ref{bc} only. We leave for the reader to fill out the details of the proof of Theorem \ref{bcmin}, where concavity should be repalced by convexity.
\item 2. An interesting observation follows from the
comparison of~\eqref{eqcet101} and~\eqref{eqcet1}. In fact, we can see that the
following rescaling relationship holds
\begin{equation}
\label{symm}
\Bel_{CET}(x_1, x_2, x_3; m,M)  = (M-m)\, B(x_1,x_2, \frac{x_3-m}{M-m}) + m\,x_2\,.
\end{equation}

In principle, \eqref{symm} should have followed just from the definitions of
$\Bel$, $\Bel_{CET}$ above. Let us provide a small explanation.

First, introduce the notations, by denoting $a_l$ to be the center of dyadic interval $l$, then
$$
c_l:= a_l + i \frac{|l|}{2}\,,\,\,d\alpha:= \sum_{l\in D(I)} \alpha_l\,d\delta_{c_l}\,,
$$
where $\delta_{z}$ stands for the delta measure at $z$ as usual. Actually,
\eqref{symm} means that if we are forced to have the uniform estimate from
below on how much measure we have in any {\em closed} Carleson square:
$$
(\mu + \alpha)(\overline{Q}_J) \ge m\, |J|\,,\forall J\in D(I)\,,
$$
then to maximize the quantity in the definition of $\Bel_{CET}(x;m,M)$ over all
admissible $\mu$'s and $\alpha$'s (that is to obtain $\Bel_{CET}(x;m,M)$) one needs
to keep ``boundary measure" $\mu$ to be exactly equal to $m\,dx$. In principle,
it is easy to believe that always $\mu$ should be $\ge m\,dx$. But~\eqref{symm}
claims more, namely, the equality. It means, that if we want to maximize the
outcome in the definition of $\Bel_{CET}(x;m,M)$ we need to keep minimal
possible mass (namely, $m\,dx$) on the boundary, as a reserve so-to-speak, and
we need to distribute the rest of mass in the form of measure $\alpha$
somewhere inside the half-plane. Also \eqref{symm} means that this distribution
of $\alpha$ mimics and actually equals the rescaled distribution of the
extremal measure $\alpha$ in the case when we maximize $\Bel$ and not
$\Bel_{CET}(x;m,M)$.

\item 3. Of course \eqref{symm} shows that function $\Bel$ is ``more equal'' among
$\Bel_{CET}$. But interestingly enough, \eqref{symm} seems not to follow
directly from the definitions of functions $\Bel$ and $\Bel_{CET}(x;m,M)$
in \eqref{Bel} and \eqref{cetbel} but rather from the proofs of
Theorems \ref{bc01} and \ref{bc}. In fact, it is a priori not clear why the optimal distribution of measure for $\Bel_{CET}(x;m,M)$ should have these two features: a) it leaves minimal possible ``reserve" on the boundary of the circle; b) it distributes the rest by repeating the best distribution for $\Bel$
(with correctly rescaled parameters) as \eqref{symm} shows.
\end{itemize} 

\vspace{.2in}

We shall not prove Theorem \ref{bdelta}, its proof can be found in~\cite{V}
or~\cite{SV}, in the latter paper the comparison of dyadic and non-dyadic cases
for JNI can be found as well. We use this theorem as a ``simple'' example to
illustrate how to find the Bellman function by using the general method of
solving the Monge--Amp\`ere equation. Then we use the same approach in much
more involved situations of Theorem~\ref{bc}. This Bellman function is also not
quite new, for the case $m=0$, $M=1$ the function was found by Melas
in~\cite{M}, where the reader can find the complete proof of the theorem for
this the most important case.

However, in~\cite{M} there is no explanation how the author was able to guess
his very non-trivial function. And the main accent of our paper is just the
explanation how it is possible to guess right, or more correctly, how to solve
the corresponding Bellman equation, which is a Monge--Amp\`ere equation in the
situation under consideration. Nevertheless, we will not stop after finding the
Bellman function from Theorem~\ref{bc} and provide the complete proof for the
sake of completeness and the convenience of the reader and because in such more
general setting it is new.

\section{How we proceed?}
\label{plan}

The consideration of the theorems above can be split to four parts.

\begin{itemize}
\item I. In the first part one observes that just by definition Bellman
functions $\Bel$ satisfy a certain concavity condition in their domain of
definition and boundary conditions on  (part of) the boundary of this domain.
\item II. In the second part one considers {\bf all} function satisfying these
concavity  and boundary conditions. We denote this class by $\mathcal{V}$. And
one makes the following supposition: as $b$ belongs to $\mathcal{V}$ and is the
``best'' such function, it has to satisfy not only the concavity condition but
also this concavity should become degenerate, i.e., the inequality has to turns
into equality along some vector field on our domain $\Omega$. This brings to
the picture the Monge--Amp\`ere equation. One solves it using the boundary
conditions mentioned above. The result is a function $B\in \mathcal{V}$.
Function $B$ carries an interesting geometric information to be used later.
\end{itemize}

These two steps are in fact not necessary for the proof of the results, they
are needed only to finding a function $B$, a candidate for a r\^ole of the
Bellman function $\Bel$. For example, in the excellent paper~[M] very
complicated Bellman functions appear as {\it deus ex machina}. As we shall see
the analysis of Monge--Amp\`ere equation not only supplies us with a candidate,
but it helps in the next two steps as well, namely, in the prove that the found
candidate really is the desired Bellman function.

\begin{itemize}
\item III. The third part consists of proving that $B\ge\Bel$. In convex
domains of definition this is usually not difficult. Otherwise it may require a
non-trivial proof, see \cite{SV} for non-convex $\Omega_{\ve}$.
\item IV. The
fourth part consists of proving $B\le\Bel$. This is achieved by presenting the
extremal functions or extremal sequences of functions. In its turn such
functions are found from the geometric structure of $B$ (mentioned above in
II).
\end{itemize}

\section{Monge--Amp\`ere equation.}
\label{MA}

This section is here for illustrative purposes. Through this section the
function $\Bel$ is the Bellman function $\Bel_{JNI}$ for the John--Nirenberg
inequality defined in~\eqref{jnibel} and $\Omega=\Omega_{JNI}$ is its domain
defined in~\eqref{jnidom}.

Let us very briefly review {\bf Part I} for Theorem \ref{bdelta}.

It is very easy to see that in the dyadic case (i.e. in the case when only
dyadic intervals $J\in D(I)$ are considered in~\eqref{jnibel}) the function
$\Bel$ has to satisfy concavity condition
\begin{equation}
\Bel(x) - \frac12\big(\Bel(x^+) + \Bel(x^-)\big) \ge 0\,,
\label{mi1}
\end{equation}
for all triples $x,\,x^+,\,x^-$ such that $x=\frac{x^++x^-}2$ and all three
points are from $\Omega$.

In the non-dyadic case (i.e. in the case when all subintervals $J\subset I$
are considered in~\eqref{jnibel}) it is not clear a priori why the Bellman
function $\Bel$ must satisfy the concavity condition~\eqref{mi1}. And it turns
out that for the Bellman function condition~\eqref{mi1} is not fulfilled.
However, it turns out that the Bellman function is locally concave, i.e.,
concave in any convex subdomain of $\Omega$. We shall not discuss here this
subtle moment, especially because we shall use just the local concavity
condition, which is of course weaker than the global concavity, and coincides
with the latter only for convex domains. The discussion of the difference
between dyadic and non-dyadic Bellman functions for the John--Nirenberg
inequality the reader can find in~\cite{SV}. Here we only add that for the
functions smooth enough, the concavity condition can be rewritten in the
differential form:

\begin{equation}
d^2\Bel(x) := \begin{pmatrix}
\Bel_{x_1x_1} & \Bel_{x_1x_2}\\
\Bel_{x_2x_1} & \Bel_{x_2x_2}
\end{pmatrix} \le 0\,,\qquad\forall x\in \Omega\,,
\label{mi1diff}
\end{equation}
where by $\Bel_{x_ix_j}$ we denote the partial derivatives
$\displaystyle\frac{\partial^2\Bel}{\partial x_i\partial x_j}$\,.

We have one obvious (from the definition) boundary condition on the lower parabola
of the boundary $\partial\Omega$:

\begin{equation}
\Bel(x_1, x_1^2) = e^{x_1}\,,
\label{bc1}
\end{equation}
because only the constant test functions $\phi$ correspond the points $x$ with
$x_2=x_1^2$.

We also have a simple (from definition) homogeneity condition

\begin{equation}
\Bel(x_1+t, x_2 + 2x_1 t + t^2) = e^t \Bel(x_1, x_2)\,.
\label{hm1}
\end{equation}

It follows from the definition of $\Bel$ if to take $\phi +t$ as a test
function rather than~$\phi$.

Let us move to the {\bf Part II.} Put
\begin{equation*}
\mathcal{V}=\lbrace v\in C^{2}(\Omega)\colon v \text{ satisfies }
\eqref{mi1diff},\, \eqref{bc1},\, \eqref{hm1}\rbrace\,.
\end{equation*}

Usually any function from $\mathcal{V}$ (and moreover, any function satisfying
only the concavity condition~\eqref{mi1}) supply us with some estimate. But if
we looked for a sharp estimate we need to choose the minimal possible function
$v$ from this class, which must be the Bellman function $\Bel$. This function,
``a candidate in the true Bellman function'' will be denoted by the usual
letter $B$. For every point $x\in\Omega$ there exists an extremal function
$\phi$ (or ``almost extremal'' function $\phi_n$, i.e., a sequence of
functions) realizing the supremum in the definition~\eqref{jnibel}. The usual
procedure of using the Bellman function consists in the consecutive application
of~\eqref{mi1}, when splitting the interval $I$, where a test function is
defined. For the extremal test function there has to be no lost in such
procedure, therefore, for the Bellman function the equality has to occur at
least for one splitting the point $x$ into a pair $\{x^+,\,x^-\}$. For a
concave function this means that it is linear at some direction. If we have
almost extremal functions, i.e., an extremal sequence, then we must have
``almost equality'' in~\eqref{mi1}, at least up to the terms of second order.
In any case this means that the Hessian matrix~\eqref{mi1diff} has to be
degenerated. Thus, we are looking for the ``best'' $B$, on the top of this
condition of negativity of Hessian we will impose the following {\bf
degeneration} condition:

\begin{equation}
\forall x\in \Omega\ \exists\Theta\in \R^2\setminus \{0\}\colon \
\bigl((d^2B)\,\Theta,\Theta\bigr) = \left(\begin{pmatrix} B_{x_1x_1} &
B_{x_1x_2}
\\
B_{x_2x_1} & B_{x_2x_2}
\end{pmatrix}
\begin{pmatrix} \Theta_1\\ \Theta_2\end{pmatrix},
\begin{pmatrix} \Theta_1\\ \Theta_2\end{pmatrix}\right)= 0\,.
\label{degxi1}
\end{equation}

Since the matrix $d^2 B$ is negatively defined, we conclude from~\eqref{degxi1}
the following degeneration condition on the Hessian:

\begin{equation}
\det (d^2 B) = \det \begin{pmatrix} B_{x_1x_1} & B_{x_1x_2}
\\
B_{x_2x_1} & B_{x_2x_2}
\end{pmatrix} = 0\,,\quad\forall x\in\Omega\,.
\label{degdet1}
\end{equation}

We underline once more that in the first two steps we can allow ourself not too
rigorous arguments and various assumptions, because this is not the proof, it
is heuristic way of finding a candidate in the Bellman function. The rigorous
proof that the found candidate is indeed the required Bellman function starts
from the Part III.

\noindent{\bf Claim.} {\em There is a simple algorithm to find the family of
functions enumerated by a parameter $\delta$ $(\ve\le\delta\le1)$
$$
B=B_{\delta}(x_1,x_2):=
\frac{e^{-\delta}}{1-\delta}\left(1-\sqrt{\delta^2-(x_2-x_1^2)}\right)
e^{x_1+\sqrt{\delta^2 -(x_2-x_1^2)}}
$$
that solves Monge--Amp\`ere equation~\eqref{degdet1} with boundary condition
\eqref{bc1} and homogeneity condition~\eqref{hm1}.}

To prove this claim we need

\subsection{Monge--Amp\`ere equation and method of ``characteristics"}
\label{MAE}

Let $v= v(x_1,..., x_n)$ is a smooth
function satisfying the following Monge--Amp\`ere equation in some domain
$\Omega$

\begin{equation}
\det d^2 v = \det \begin{pmatrix}
v_{x_1x_1} & \cdots & v_{x_1x_n}\\
\cdots&\cdots&\cdots\\
v_{x_nx_1} &\cdots&  v_{x_nx_n}
\end{pmatrix} = 0\,,\qquad \forall x=(x_1,\dots,x_n) \in \Omega\,,
\label{maeq}
\end{equation}
and suppose that this matrix has rank $n-1$, i.e., all smaller minors od $d^2
v$ are non-zero. Then there are functions $t_i(x_1,\dots,x_n)$, $i= 0,1,\dots,
n$, such that

\begin{equation}
v(x) = t_0 + t_1 x_1 + t_2 x_2 +\dots+ t_n x_n
\label{vrepr}
\end{equation}
and  the following $n-1$ linear equations hold:

\begin{equation}
dt_0+x_1 dt_1+x_2 dt_2+\dots+x_{n-1}dt_{n-1}+x_n dt_n=0\,.
\label{lineqrepr}
\end{equation}

Let us explain why this is $n-1$ equations and why they are linear. One needs
to read~\eqref{lineqrepr} as follows: we think that, say, $t_1,\dots,t_{n-1}$
are $n-1$ independent variables and  $t_n$, $t_0$ are functions of them.
Then~\eqref{lineqrepr} can be rewritten as follows
$$
\Bigl(\frac{\partial t_0}{\partial t_1}+x_1+ x_n\frac{\partial t_n}{\partial
t_1}\Bigr) dt_1+\dots+\Bigl(\frac{\partial t_0}{\partial t_{n-1}}+x_{n-1}+
x_n\frac{\partial t_n}{\partial t_{n-1}}\Bigr)dt_{n-1} =0\,,
$$
whence
$$
x_i+x_n\frac{\partial t_n}{\partial t_i}+\frac{\partial t_0}{\partial t_i}=0\,,
\qquad i=1,\dots,n-1\,.
$$
So we get $n-1$ equations.

\medskip

\noindent {\bf Remark.} In general we can choose {\bf any} $n-1$ variables as
independent, of course. Since the order of variables is arbitrary, sometimes
the first $n-1$ is not the most convenient choice.

\medskip

Now why these are linear equations? We think that $t_1,\dots,t_{n-1}$ is fixed.
Then the $n-1$ equations give us linear relationships in $x_1,\dots,x_n$, so
$n-1$ hyperplanes.

Therefore, \eqref{lineqrepr} gives the intersection of $n-1$ hyperplanes, so
gives us a line. We can call it $L_{t_1,\dots,t_{n-1}}$. These lines foliate
domain $\Omega$ and~\eqref{vrepr} shows that $v$ is a linear function on each
such line.

Let us prove all these propositions. Matrix $d^2 v$ annihilates one vector
$\Theta(x)$ at every $x=(x_1,...,x_n)\in\Omega$. So we get a vector field
$\Theta$. Consider its integral curve $x_1= x_1(x_n),\dots,
x_{n-1}=x_{n-1}(x_n)$. Vector $\Theta(x)$ is a tangent vector to that curve,
i.e.,
\begin{equation}
\Theta=\Theta_n
\begin{pmatrix}
x_1'\\x_2'\\ \cdots\\x_{n-1}'\\1
\end{pmatrix}.
\label{d2g}
\end{equation}
Consider a new function $g(x_n)= v(x_1(x_n),\dots,x_{n-1}(x_n), x_n)$. Due
to~\eqref{d2g} its second derivative is
\begin{equation}
 g''= \Big\langle d^2 v
\begin{pmatrix}
x_1'\\x_2'\\ \cdots\\x_{n-1}'\\1
\end{pmatrix},
\begin{pmatrix}
x_1'\\x_2'\\ \cdots\\x_{n-1}'\\1
\end{pmatrix}
\Big\rangle + v_{x_1} x_1''+\dots+ v_{x_{n-1}} x_{n-1}''= v_{x_1} x_1''+\dots+
v_{x_{n-1}} x_{n-1}''\,.
\label{t1tn}
\end{equation}
Now, let us also show that $v_{x_{1}},\dots,v_{x_{n-1}}$ are constants on this
integral curve. Suppose we are standing on the integral curve. The surface
$v_{x_1}= t_1=\const$ has normal $(v_{x_1x_1},\dots, v_{x_1x_n})$, that is the
first row of matrix $d^2 v$, which is orthogonal to the directional vector
$\Theta$ of the integral curve. Hence $\Theta$ is in the tangent hyperplane to
the surface $v_{x_1}= t_1$. The same is true for the surfaces
$v_{x_i}=t_i=\const$, $i=2,\dots, n-1$. Intersection of these surfaces gives us
our integral curves, because $\Theta$ is in the intersection of all tangent
planes to these surfaces, therefore the curves $C_{t_1,\dots,t_{n-1}}$
enumerated by constants $t_1,\dots,t_n$ are just the integral curves of the
tangent bundle $\Theta$. Thus,~\eqref{t1tn} can be rewritten as
\begin{equation}
\frac{d^2}{dx_n^2}\big(g-(t_1\,x_1+\dots+ t_n\,x_{n-1})\big)=0. \label{tttt}
\end{equation}

We obtain that the second derivative of a function (of $x_n$) in~\eqref{tttt}
is zero. So function is linear in $x_n$, that is $t_n x_n +t_0$, where the
constants $t_n$, $t_0$ depend only on the curve $C_{t_1,\dots,t_{n-1}}$, that
is
$$
t_n=t_n(t_1,\dots,t_{n-1})\,,\qquad t_0=t_0(t_1,\dots,t_{n-1})\,.
$$
We obtain on $C_{t_1,....,t_{n-1}}$
$$
v(x_1(x_n),\dots,x_{n-1}(x_n), x_n)=t_0+x_1t_1+\dots+x_{n-1}t_{n-1}+t_nx_n\,.
$$
Since we assumed our vector field to be smooth and its integral curves foliate
the whole domain, varying parameters $t_1,\dots,t_n$ we get~\eqref{vrepr}. To
check~\eqref{lineqrepr} take a full differential in~\eqref{vrepr}. Then
\begin{equation}
[v_{x_1}dx_1+\dots+v_{x_n}dx_n]=dv =[dt_0+t_1dx_1+\dots+t_ndx_n]
+x_1dt_1+\dots+x_ndt_n\,. \label{brackets}
\end{equation}
We are on $C_{t_1,\dots,t_n}$ and so $v_{x_i}=t_i$, $i=1,\dots,n-1$ as we
established already. But it is also easy to see that
$$
v_{x_n} = t_n(t_1,\dots,t_{n-1})
$$
on  $C_{t_1,\dots,t_{n-1}}$. In fact, all $v_{x_i}$ and all $t_i$ are
symmetric. We could have chosen to represent the integral curve of $\Theta$ not
as $x_i= x_i(x_n), i=1,\dots,n-1$ but as $x_j=x_j(x_1)$, $j=2,\dots,n$. Now we
see that two expression in brackets in~\eqref{brackets} are equal. Then we
obtain $x_1 dt_1+ \dots+ x_n dt_n + dt_0=0$, which is desired $n-1$ linear
relationships~\eqref{lineqrepr}.

\bigskip

\subsection{The proof of the claim}
\label{claimproof}

We can use the previous Section~\ref{MAE}. But we prefer to repeat it for the
simple case of only two variables!

\begin{proof}

Equation~\eqref{degdet1} of course is solved by linear function in $x_1$,
$x_2$, but linear function cannot satisfy the boundary condition. So  we assume
that matrix $d^2B$ is negatively defined, non-zero, and there exists exactly
one vector $\Theta\in\R^2\setminus \{0\}$ that annihilates it: $d^2B\Theta=0$.
This vector may depend of course on $(x_1,x_2)$ and we get a vector field
$\Theta(x)$ in $\Omega$. Let us see now that $B_{x_i}, i=1,2,$ are  constants on the
integral curves of this vector field. Locally our integral curves are level
sets of a certain function $s=s(x_1,x_2)$. Of course, the function $s(x)$ is
not uniquely defined, it is defined up to change of variables
$s\mapsto\varphi\circ s$, where $\varphi$ is a function of one variable. But in
any parametrization the vector $(-s_{x_2}, s_{x_1})$ is a tangent vector to the
integral curve, i.e., it is collinear with our vector field $\Theta$ and thus
annihilates $d^2B$:
\begin{equation}
\begin{pmatrix}
B_{x_1x_1} & B_{x_1x_2}\\
B_{x_2x_1} & B_{x_2x_2}
\end{pmatrix} \begin{pmatrix} -s_{x_2}\\ s_{x_1}\end{pmatrix}=0\,.
\label{theta}
\end{equation}
Notice that both functions $B_{x_i}(x_1,x_2)$ instead of $s(x_1,x_2)$ satisfy
the last equation as well. In fact, the vector $(-(B_{x_1})_{x_2},
(B_{x_1})_{x_1})$ annihilates the first row of $d^2B$ obviously. Then it
annihilates the second row just by~\eqref{degdet1}. The same is true for the
vector $(-(B_{x_1})_{x_2}, (B_{x_1})_{x_1})$. Therefore, the integral curves of
our vector field are the level sets of the functions both $B_{x_1}(x_1,x_2)$
and $B_{x_2}(x_1,x_2)$. Thus, we can conclude that (at least locally)
$B_{x_i}=t_i\circ s$ for some functions $t_i$ of one variable. Moreover, this
means that we can take any of $B_{x_i}$ to play the role of $s$: to define the
integral curves of our vector field $\Theta$ as its level sets.

Let us check now that the function $t_0:=B-t_1x_1-t_2x_2$ is also constant
along any integral curve. For this aim we have to check that we can put $t_0$
instead of $s$ into~\eqref{theta}, but it is evident, because at each point
$(x_1,x_2)$ we have
\begin{align*}
-\frac{\partial t_0}{\partial x_2}=&-\frac{\partial B}{\partial x_2}+
\frac{\partial t_1}{\partial x_2}x_1+\frac{\partial t_2}{\partial x_2}x_2+t_2=
B_{x_1x_2}x_1+B_{x_2x_2}x_2
\\
\frac{\partial t_0}{\partial x_1}=&\frac{\partial B}{\partial x_1} -
\frac{\partial t_1}{\partial x_1}x_1-\frac{\partial t_2}{\partial x_1}x_1-t_1=
-B_{x_1x_1}x_1-B_{x_2x_1}x_2
\end{align*}

So, we obtained
\begin{equation}
B(x)=t_0+t_1x_1+t_2x_2.
\label{v2}
\end{equation}
The integral curve of the kernel field of the Hessian of this function are the
straight lines given by the equation
\begin{equation}
dt_0+x_1dt_1+x_2dt_2=0\,.
\label{dv2}
\end{equation}
Indeed, on the one hand
$$
dB=B_{x_1}dx_1+B_{x_2}dx_2\,,
$$
on the other hand the differential of~\eqref{v2} is
$$
dB=dt_0+x_1dt_1+t_1dx_1+x_2dt_2+t_2dx_2\,.
$$
Comparison of the these two equalities yields~\eqref{dv2}.

If the vertical integral curves of our vector field are nowhere dense (so they
could be given by $x_2=x_2(x_1)$), then they could be considered as the level
sets $t_2=t$, and two other parameters are functions of $t$.
Equations~\eqref{v2} and~\eqref{dv2} could be rewritten as
\begin{equation}
B(x)=t_0(t)+t_1(t)x_1+tx_2\,,\qquad t_0(t)'+t_1(t)'x_1+x_2=0\,.
\end{equation}
If the horizontal integral curves of our vector field are nowhere dense, they
could be written by $x_1=x_1(x_2)$, or as the level sets $t_1=t$, and two other
parameters are functions of $t$. Equations~\eqref{v2} and~\eqref{dv2} then
takes the form
\begin{equation}
B(x)=t_0(t)+tx_1+t_2(t)x_2\,,\qquad t_0(t)'+x_1+t_2(t)'x_2=0\,.
\end{equation}

Till now we have considered general equation when the determinant of the
Hessian for a function of two variables is zero. Now we return to our specific
problem and use~\eqref{hm1} to determine the structure of the family of
integral lines.

Differentiating \eqref{hm1} in $t$ and setting $t=0$ we get
$$
B_{x_1} + 2x_1 B_{x_2} =B\,
$$
or
$$
t_1+2x_1t_2=t_0+t_1x_1+t_2x_2\,.
$$
This is the equation of a straight line
\begin{equation}
(t_0-t_1)+x_1(t_1-2t_2)+x_2t_2=0
\label{extremal}
\end{equation}
representing a level set of the
functions $t_i$, therefore it has to be the same straight line that is given by
equation~\eqref{dv2}. Hence the following two equations must be true
$$
\frac{dt_0}{t_0-t_1}=\frac{dt_1}{t_1-2t_2}=\frac{dt_2}{t_2}\,.
$$
They could be rewritten as
\begin{align}
d\Big(\frac{t_0}{t_2}\Big)=-t_1\frac{dt_2}{t_2^2}\,,
\label{eqt0}
\\
d\Big(\frac{t_1}{t_2}\Big)=-2\frac{dt_2}{t_2}\,.
\label{eqt1}
\end{align}
Solution of~\eqref{eqt1} is
\begin{equation}
t_1=-2t_2\log|t_2|+2c_1t_2 \label{t1}
\end{equation}
(we denoted the integration constant by $2c_1$ for the future convenience). After
plugging  this solution into~\eqref{eqt0}, we get
\begin{equation}
t_0=t_2\log^2|t_2|-2c_1t_2\log|t_2|+c_2t_2\,. \label{t0}
\end{equation}

Now we rewrite equation~\eqref{extremal} using the obtained expressions for
$t_i$:
$$
\Bigl(\log^2|t_2|+2(1-c_1)\log|t_2|+(c_2-2c_1)\Bigr)-2x_1\Bigl(\log|t_2|-c_1+1\Bigr)+x_2=0\,.
$$

It is convenient introduce a new variable
$$
a:=\log|t_2|-c_1+1\,.
$$
Then the formulas~\eqref{t1} and~\eqref{t0} can be rewritten as follows:
\begin{align}
t_1=&2(1-a)t_2 \label{t1a}\,,
\\
t_0=&\big((1-a)^2-c_1^2+c_2\big)t_2 \label{t0a}\,,
\end{align}
and the extremal lines are
\begin{equation}
\label{alines} x_2-2ax_1 + a^2+c_2-1-c_1^2=0\,.
\end{equation}
Since
$$
c_1^2+1-c_2=x_2-2ax_1 + a^2=(x_2-x_1^2)+(x_1-a)^2\ge0\,,
$$
we can introduce a new non-negative constant $\delta$ such that
$$
\delta^2=c_1^2+1-c_2
$$
and rewrite equation~\eqref{alines} in the form
\begin{equation}
\label{tanlines} x_2-2ax_1 + a^2-\delta^2=0\,.
\end{equation}

We observe that these are lines tangent to parabola $x_2 = x_1^2 +\delta^2$ at
the point $x_1=a$, $x_2=a^2+\delta^2$ and intersecting the lower boundary
$x_2=x_1^2$ at the points with $x_1=a\pm\delta$. We want these lines to foliate
domain $\Omega=\{(x_1, x_2): x_1^2 \le x_2 \le x_1^2 + \ve^2\}$. Hence,
$$
\delta\ge\ve\,.
$$

Now we can almost write down $B(x_1,x_2)$. The variable $a$ can be found in
terms of $x_i$ by solving equation~\eqref{tanlines}:
$$
a=x_1+\sqrt{\delta^2 -(x_2-x_1^2)}\,.
$$
We have chosen the solution with the plus sign in front of the square root. The
explanation, why  does it correspond to our problem, and to what extremal
problem corresponds the opposite sign,  can be found  in~\cite{V, SV}.  Using
now~\eqref{t1a}, \eqref{t0a}, and~\eqref{tanlines} we obtain the expression for
$B$:
\begin{align*}
B(x_1,x_2)&=t_0+t_1x_1+t_2x_2
\\
&=\big((1-a)^2-c_1^2+c_2+2(1-a)x_1+x_2\big)t_2
\\
&=2(1-a+x_1)t_2=\pm2(1-a+x_1)e^{a+c_1-1}
\\
&=\pm2\Big(1-\sqrt{\delta^2 -(x_2-x_1^2)}\Big)e^{x_1+\sqrt{\delta^2
-(x_2-x_1^2)}+c_1-1}\,.
\end{align*}
Since $B$ has to be positive and to satisfy boundary condition~\eqref{bc1} we
choose the sign and the value of the constant $c_1$. Finally, we get a family
of solutions
$$
B(x_1,x_2;\delta)=\frac{1-\sqrt{\delta^2
-(x_2-x_1^2)}}{1-\delta}e^{x_1+\sqrt{\delta^2 -(x_2-x_1^2)}-\delta}
$$
depending on a parameter $\delta$. Acceptable values of the parameter $\delta$
are $0<\delta<1$. Why it is so, as well as the choice of this parameter for a
given $\epsilon$ (by the way, different for the dyadic and non-dyadic cases),
an explanation of all these facts can be found in~\cite{SV}.
\end{proof}

The above family of functions $B$ was found in~\cite{V} and~\cite{SV} from
different reasoning. After finding this candidate the steps I, II are
completely finished for John--Nirenberg inequality. The reader is referred to
\cite{SV}, where steps III and IV are done. The fact that $\Omega$ is not convex makes
step III rather delicate.

\medskip

Now we are going to present for the reader all four steps for another more
difficult problem of finding the Bellman functions of Carleson embedding
theorems (CET).

\section{Step I for Carleson embedding theorems }
\label{ICET}

From the definition of function $\Bel$ above we can immediately see that it
satisfies the inequality (just concavity)

\begin{equation}
\label{miB} \Bel(x) - \half \big(\Bel(x^-) + \Bel(x^+)\big) \ge
0\,,\quad\forall x^\pm\in\Omega\,,\quad x= \half (x^+ + x^-)\,.
\end{equation}
Note that now $\Omega=\Omega_{CET}$ (see~\eqref{cetdom}) is convex and for all
$x^\pm\in\Omega$ we have $x\in\Omega$.

For the sake of brevity, we shall usually omit the parameters $m$ and $M$.

The boundary conditions also follow  from the definition:

\begin{equation}
\label{BCB} \Bel(x_1, x_1^2, x_3) = x_1^2\, x_3\,.
\end{equation}

\begin{equation}
\label{BCBUL} \frac{\partial\Bel}{\partial x_3} (x_1,x_2,M;m,M) = x_1^2\,.
\end{equation}

It is equally easy to see that homogeneity condition holds

\begin{equation}
\label{hmB} \Bel(tx_1, t^2x_2, x_3) = t^2\,\Bel(x_1,x_2,x_3)\,,\,\, t\in\R\,.
\end{equation}
The detailed explanation of these properties can be found in~\cite{NT}, where
this Bellman function was defined and a majorant (so called supersolution) was
found.

Let us consider all smooth functions in $\Omega$ satisfying~\eqref{miB},
\eqref{BCB}, \eqref{hmB}. Call this family $\mathcal{V}$.

As before, we are looking for the ``best" $B\in \mathcal{V}$, so on the top of
the condition~\eqref{miB} of negativity of Hessian we will impose the following
{\bf degeneration}  condition:

\begin{multline}
\label{degxiB} \forall x=(x_1,x_2, x_3) \in \Omega\quad \exists \Theta\in
\R^3\setminus \{0\}
\\
\big((d^2B)\Theta,\Theta\big) = \left(\begin{pmatrix}
B_{x_1x_1} & B_{x_1x_2} & B_{x_1x_3}\\
B_{x_2x_1} & B_{x_2x_2} & B_{x_2x_3}\\
B_{x_3x_1} & B_{x_3x_2} & B_{x_3x_3}
\end{pmatrix} \begin{pmatrix} \Theta_1\\ \Theta_2\\ \Theta_3\end{pmatrix} ,
\begin{pmatrix} \Theta_1\\\Theta_2\\ \Theta_3\end{pmatrix}\right)= 0\,.
\end{multline}

As matrix $d^2B$ is negatively defined we conclude from \eqref{degxi1} the
following degeneration condition on the Hessian:

\begin{equation}
\label{degdetB}
\det (d^2 B) = \det \begin{pmatrix}
B_{x_1x_1} & B_{x_1x_2} & B_{x_1x_3}\\
B_{x_2x_1} & B_{x_2x_2} & B_{x_2x_3}\\
B_{x_3x_1} & B_{x_3x_2} & B_{x_3x_3}\end{pmatrix} = 0\,,\qquad \forall x\in
\Omega\,.
\end{equation}

\bigskip

\noindent{\bf Claim.} {\em There is a simple algorithm to find the function
$B(x;m,M)$  that solves Monge--Amp\`ere equation~\eqref{degdetB} in the
domain~\eqref{cetdom} with boundary conditions~\eqref{BCB}--\eqref{BCBUL} and
homogeneity condition~\eqref{hmB}.}

\bigskip

\begin{proof}
Again we consider the vector field $\Theta$ such that $d^2B(x)\Theta(x)=0$,
$x=(x_1,x_2, x_3) \in \Omega$. And we consider its integral curves. Section
\ref{MAE} shows that these integral curves are straight lines (segments of
straight lines). Section \ref{MAE} and its the method of characteristics allows
us to write the following ``parametric'' equation for these lines, which we
first write in the invariant form (compare with \eqref{dv2})

\begin{equation}
\label{invlinesB} x_1 dt_1 + x_2 dt_2 + x_3 dt_3 + dt_0=0\,.
\end{equation}

This can be rewritten as follows if we choose $(t_1, t_2)$ as the set of
independent parameters defining lines $L_{t_1, t_2}$ foliating our domain:

\begin{equation}
\label{linesB}
\begin{cases}
x_1 + \threeone\cdot x_3 + \zeroone = 0
\\
x_2 + \threetwo\cdot x_3 + \zerotwo =0
\end{cases}
\end{equation}
And similarly to the previous section the solution of the Monge--Amp\`ere
equation is given by
\begin{equation}
\label{B} B(x_1, x_2, x_3) = t_1\cdot x_1 + t_2\cdot x_2 + t_3\cdot x_3 +
t_0\,.
\end{equation}
Here each line is given by fixing two free parameters $(t_1, t_2)$, which are
$t_1= \Bone$, $t_2= \Btwo$. (This can be obtained exactly as in the previous
section.) Parameters $t_3, t_0$ are not free, they are unknown functions of
$t_1$, $t_2$, for which we will find (a priori non-linear) PDE. They will be
easy in our case (and linear). And we will solve them easily. This will bring
us the formula for $B$ in the same way we get when proving the claim of
Section~\ref{MA}.

Let us use \eqref{hmB} now. Differentiating in $t$ and setting $t=1$ we get
\begin{equation}
x_1 t_1 + 2 x_2 t_2 = 2 B\,.
\label{shortB}
\end{equation}
Whence,
\begin{equation}
\label{planeB} t_1 x_1 + 2 t_3 x_3 + 2 t_0 =0\,.
\end{equation}

Homogeneity gave us \eqref{planeB} and we conclude that
this plane (for each fixed $t_1$, $t_2$) contains the line \eqref{linesB}.  But
the line \eqref{linesB} is passing through the point
\begin{equation}
\label{pointon}
( -\zeroone, -\zerotwo, 0)\,,
\end{equation}
and has the direction
\begin{equation}
\label{direction}
(-\threeone, -\threetwo, 1)\,.
\end{equation}
Hence we get from \eqref{planeB}:



\begin{equation}
\begin{cases}
\displaystyle 2 t_3-t_1\,\threeone=0\\
\displaystyle 2 t_0-t_1\,\zeroone=0\,.
\end{cases}
\end{equation}

From these ``PDE" we easily write down
\begin{equation}
\label{AD}
\begin{cases}
t_3 = A(t_2) t_1^2\\
t_0 = D(t_2) t_1^2\,.
\end{cases}
\end{equation}
Then the equations of the extremal lines $L_{t_1,t_2}$ can be rewritten in the
form
\begin{equation}
\label{linesAD}
\begin{cases}
x_1 + 2 t_1 A x_3 + 2 t_1 D=0
\\
x_2 + t_1^2 A' x_3 + t_1^2 D' =0
\end{cases}
\end{equation}

We need to work a bit to define functions $A$ and $D$. We assume that
our line intersects $\partial\Omega$ in a point
$\zeta=\zeta(t_1,t_2)$ on the ``upper lid'' $x_3=M$ and in a point
$\xi=\xi(t_1,t_2)$ on the ``side'' $x_2=x_1^2$. Then we have two pairs of
equations~\eqref{linesAD} asserting that our points are on the line
$L_{t_1,t_2}$
\begin{align}
\zeta_1 + 2 t_1 A M + 2 t_1 D&=0\label{zeta1}
\\
\zeta_2 + t_1^2 A' M + t_1^2 D'&=0\label{zeta2}
\\
\xi_1 + 2 t_1 A \xi_3 + 2 t_1 D&=0\label{xi11}
\\
\xi_1^2 + t_1^2 A' \xi_3 + t_1^2 D'&=0\label{xi12}
\end{align}
and two boundary conditions~\eqref{BCB} and~\eqref{BCBUL}
\begin{align}
\half\xi_1 t_1 + \xi_1^2 t_2&=\xi_1^2\xi_3\label{xi13}
\\
A t_1^2&=\zeta_1^2\,.\label{A}
\end{align}

Thus, we have six equations with six unknown functions: $\zeta_1$, $\zeta_2$,
$\xi_1$, $\xi_3$, $A$, and $D$. Equation~\eqref{zeta2} determines the function
$\zeta_2$. If we take $\zeta_1$ from~\eqref{zeta1} and plug into~\eqref{A} we
get
\begin{equation}
A=4(AM+D)^2\,.\label{A1}
\end{equation}
So, introducing a new function of $t_2$
\begin{equation}
a=2(AM+D)\label{a}
\end{equation}
we can express both functions $A$ and $D$ in terms of $a$: \eqref{A1} yields
\begin{equation}
A=a^2\label{A2}
\end{equation}
and directly from the definition of $a$~\eqref{a} we get
\begin{equation}
D=\half a-Ma^2\,.\label{D}
\end{equation}

Note that deducing~\eqref{A1} we divided both parts of~\eqref{A} over $t_1$.
This is a correct operation because the Bellman function $\Bel$ clearly depends
on all variables $x_i$, and therefore its partial derivatives $t_i$ cannot be
identically zero. By the way, since $t_3$ is not identically zero, so is $a$.
Moreover, we can assert that $a$ is not a constant function. Indeed, assuming
$a$ to be a constant, we have $\zeta_2=0$ from~\eqref{zeta2}, whence
$\zeta_1=0$ due to $\zeta_1^2\le\zeta_2=0$. But~\eqref{zeta1} can be written as
$\zeta_1+at_1=0$, and since neither $t_1$ nor $a$ is not zero, we come to a
contradiction.

In result we reduce our system to the system of three equations with three
unknown functions $a(t_2)$, $\xi_1(t_1,t_2)$, and $\xi_3(t_1,t_2)$. Indeed,
equations~\eqref{xi11}, \eqref{xi12}, and~\eqref{xi13} can be rewritten as
follows
\begin{align}
\xi_1 + t_1 a \big(1-2a(M-\xi_3)\big)&=0\label{xi21}
\\
\xi_1^2 + t_1^2 a'\big(\half-2a(M-\xi_3)\big)&=0\label{xi22}
\\
\half\xi_1 t_1 -\xi_1^2(\xi_3-t_2)&=0\,.\label{xi23}
\end{align}

To have possibility divide~\eqref{xi23} over $\xi_1$, we need to check that
$\xi_1=0$ cannot be an appropriate solution. Assuming that identically $\xi_1=0$ we get
$1-2a(M-\xi_3)=0$ from~\eqref{xi21}, which turns~\eqref{xi22} into $-\half
t_1^2a'=0$. But this is impossible because, $t_1\ne0$ and $a$ is not a constant
function. So, we can rewrite~\eqref{xi23} as
\begin{equation}
\xi_3=t_2+\frac{t_1}{2\xi_1}\,.\label{xi3}
\end{equation}

In fact $\xi_3$ does not depend on $t_1$. Indeed, if we introduce
\begin{equation}
\eta=1-2a(M-\xi_3)\,,\label{eta}
\end{equation}
equations~\eqref{xi21} and~\eqref{xi22} turn into
\begin{align}
\xi_1 + t_1 a \eta&=0\label{xi31}
\\
\xi_1^2 + t_1^2 a'\big(\eta-\half\big)&=0\label{xi32}\,,
\end{align}
whence
\begin{equation}
a'\big(\eta-\half\big)+a^2\eta^2=0\,.\label{eta1}
\end{equation}

Since $a$ does not depend on $t_1$, the function $\eta$ (as a solution of this
equation) also depends only on $t_2$. Soon we shall show that $x_3$ does not
depend on $t_2$ either, it is a constant function!

But now we remove $\xi_3$ temporary from the play. From~\eqref{eta} we have
\begin{equation}
\xi_3=M-\frac{1-\eta}{2a}\,,\label{xi3a}
\end{equation}
and~\eqref{xi3} together with~\eqref{xi31} yield
\begin{equation}
\xi_3=t_2-\frac1{2a\eta}\,.\label{xi3b}
\end{equation}
The resulting equation is
$$
t_2-\frac1{2a\eta}=M-\frac{1-\eta}{2a}\,,
$$
or
\begin{equation}
t_2-M=\frac{\eta^2-\eta+1}{2a\eta}\,.\label{eta2}
\end{equation}
In result, we have two equations~\eqref{eta1} and~\eqref{eta2} for two unknown
functions $a$ and $\eta$ of the variable $t_2$. Of course, now we could solve
quadratic equation~\eqref{eta1} with respect to $\eta$ and plug the solution
in~\eqref{eta2} trying to solve the resulting differential equation with
respect to $a$. This is possible, but it leads to many complications. Much
simpler is to differentiate~\eqref{eta2}
$$
1=\frac{a\eta'(\eta^2-1)-a'(\eta^3-\eta^2+\eta)}{2a^2\eta^2}
$$
and then to replace the denominator by using~\eqref{eta1}
$$
a'(2\eta-1)=a\eta'(\eta^2-1)-a'(\eta^3-\eta^2+\eta)\,,
$$
or
\begin{equation}
\label{aeta} (\eta^2-1)(\eta-1) a' = a(\eta^2-1)\eta'\,.
\end{equation}

First we consider the possible constant solutions $\eta(t_2)=\pm1$. Solution
$\eta=1$ is not suitable. Indeed, if $\eta=1$, then from~\eqref{eta2} we get
$$
a=\frac1{2(t_2-M)}\,,
$$
whence
\begin{equation}
\label{ADcalc}
A=a^2=\frac1{4(t_2-M)^2}\,,\,\, D=\half a-Ma^2=\frac{t_2-2M}{4(t_2-M)^2}\,,
\end{equation}
\begin{equation}
\label{ADprimecalc}
A'=-\frac1{2(t_2-M)^3}\,,\,\, D'=-\frac{t_2-3M}{4(t_2-M)^3}\,.
\end{equation}
Therefore, in this case, the extremal line $L_{t_1,t_2}$ (cf.~\eqref{linesAD}
has the form
$$
\begin{cases}
\displaystyle x_1 + \frac{t_1x_3}{2(t_2-M)^2} + t_1\frac{t_2-2M}{2(t_2-M)^2}=0
\\
\displaystyle x_2 - \frac{t_1^2x_3}{2(t_2-M)^3} -
t_1^2\frac{t_2-3M}{4(t_2-M)^3}=0\,.
\end{cases}
$$
But the only point of this line with $x_3=M$ belongs to our domain $\Omega$,
because for other points we have
$$
x_2-x_1^2=\frac{t_1^2(2x_3+t_2-3M)}{4(t_2-M)^3}-\frac{t_1^2(x_3+t_2-2M)^2}{4(t_2-M)^4}
=-\frac{t_1^2(x_3-M)^2}{4(t_2-M)^4}<0\,.
$$

Solution $\eta=-1$ is also impossible.
Therefore, we
come to
\begin{equation}
\label{ab1} (\eta-1) a' =  a\eta'\,,
\end{equation}
which implies $a= C\cdot (\eta-1)$. Then~\eqref{xi3a} yields
\begin{equation}
\label{x3} \xi_3(t_2) = M + \frac1{2C} =: c\,.
\end{equation}
Recall that $\xi_3(t_1,t_2)=c$ is the level on which the line $L_{t_1, t_2}$
intersects the boundary $x_2= x_1^2$, $0\le x_3 \le 1$, i.e., the third
coordinate of the point of intersection. So, $ m\le c\le M$. By our assumptions
these lines intersect the boundary $x_3=M$, therefore, to foliate the whole
domain $\Omega$ (at least its interior) we need to have $c=m$. So, from now on
we assume $\xi_3=c=m$. Of course, we could consider any $c>m$ and try to find
an additional solution of the Monge--Amp\'ere equation in the subdomain $m\le
x_3\le c$ but we shall not do this. First of all, we do not need to look for
another solution. The better way of doing is to try now to check that the found
function is just the Bellman function of the problem. Would we have some
obstacles in proving we could continue our search. Another argument explaining
why we do not need ``to glue'' our candidate from two parts is the fact that
there is no concave solution in the subdomain $m\le x_3\le c$ being continuous
extension of the found solution. 

Recall that we have the following expression for $B$ (see~\eqref{shortB})
\begin{equation}
\label{shortB1} B = \half t_1x_1+t_2x_2\,.
\end{equation}
Thus, to find an expression for $B$ we need to find $t_1$ and $t_2$ as
functions of a point $x$ running over our domain $\Omega$.

From~\eqref{xi3b} we have
\begin{equation}
\label{t2} t_2 = m+\frac1{2a[1-2a(M-m)]}\,.
\end{equation}
This gives us the desired $a$ as a function of $t_2$, but it is clear that $a$
is more convenient parameter than $t_2$. We shall express all other functions
of $t_2$ in terms of $a$ and look for $a$ as a function of $x\in\Omega$. To
this aim we return to the equations of the extremal lines~\eqref{linesAD}
rewriting them in terms of $a$. From~\eqref{t2} we have
$$
\frac{dt_2}{da}=-\frac{1-4a(M-m)}{2a^2[1-2a(M-m)]^2}
$$
therefore
\begin{equation}
\label{aprime}
a'=-\frac{2a^2[1-2a(M-m)]^2}{1-4a(M-m)}
\end{equation}
and
\begin{equation}
\label{Aprime}
A=a^2\,,A'=2aa'=-\frac{4a^3[1-2a(M-m)]^2}{1-4a(M-m)}\,,
\end{equation}
\begin{equation}
\label{Dprime}
D =\half a-Ma^2\,, D'=\half
a'(1-4aM)=-\frac{a^2(1-4aM)[1-2a(M-m)]^2}{1-4a(M-m)}\,.
\end{equation}

Now equation~\eqref{linesAD} of the line $L_{t_1, t_2}$ becomes
\begin{equation}
\label{line}
\begin{cases}
\displaystyle x_1 = -t_1a\,[1-2a(M-x_3)]\,,\rule[-15pt]{0pt}{15pt}
\\
\displaystyle x_2 =
t_1^2a^2\frac{[1-2a(M-m)]^2[1-4a(M-x_3)]}{1-4a(M-m)} \,.
\end{cases}
\end{equation}
As we have known, the side surface $\{x_2=x_1^2\}$ is intersected by
$L_{t_1,t_2}$ at the points of the ``bottom lid'' $\{x_3=m\}$
\begin{equation}
\xi=(\xi_1,\xi_1^2,m)\,,\qquad \xi_1=-t_1a\,[1-2a(M-m)]\,.\label{xi}
\end{equation}
If we take $\xi_1$ for parameterizing the extremal lines, then their equations
will be more symmetric. Indeed,~\eqref{xi} implies
\begin{equation}
\label{Bone} t_1=-\frac{\xi_1}{1+2a(M-m)}\,,
\end{equation}
and therefore~\eqref{line} turns into
\begin{equation}
\label{line1}
\begin{cases}
\displaystyle x_1 = \frac{1-2a(M-x_3)}{1-2a(M-m)}\,\xi_1\,,
\rule[-20pt]{0pt}{20pt}
\\
\displaystyle x_2 = \frac{1-4a(M-x_3)}{1-4a(M-m)}\,\xi_1^2\,.
\end{cases}
\end{equation}

These equations immediately supply us with an expression for $a$, namely,
\hbox{$a=a(x)$} is a root of the following cubic equation
\begin{equation}
\label{cubicc} s:= \frac{x_1^2}{x_2} = \left[  \frac{1-2a(M-x_3)}{1-2a(M-m)}
\right]^2 \frac{1-4a(M-m)}{1-4a(M-x_3)}\,.
\end{equation}

To determine, which of three possible roots of the equation~\eqref{cubicc}
gives us the desired value of $a$, we investigate the above function $s=s(a)$
defined by~\eqref{cubicc} as a function of the parameter $a$, all other
parameters assuming to be fixed.

First of all we note that the extremal line~\eqref{line1} intersects the plane
$\{x_3=M\}$ at the point
\begin{equation}
\zeta=(\zeta_1,\zeta_2,M)\,,\qquad \zeta_1= \frac{\xi_1}{1-2a(M-m)}\,, \qquad
\zeta_2 = \frac{\xi_1^2}{1-4a(M-m)}\,.
\label{zeta}
\end{equation}
Since $\zeta_2\ge0$, we have the first restriction for $a$:
\begin{equation}
\label{doma} a<\frac1{4(M-m)}\,.
\end{equation}
The behavior of $s(a)$ on the semi-axis~\eqref{doma} is the same for all values
of other parameters: on the negative half-line it monotonously increases from
$\frac{M-x_3}{M-m}$ till $1$, and on the interval $[0,\frac1{4(M-m)}]$ it
monotonously decreases from $1$ to $0$. Therefore, for a given $s$, $0\le
s\le1$, we have one or two solutions of~\eqref{cubicc}
satisfying~\eqref{doma}: for all $s$ there exists a positive solution $a=a^+$
and for $s>\frac{M-x_3}{M-m}$ there exists a negative solution $a=a^-$ as well.
We shall see that our solution is $a=a^+$, but the solution $a=a^-$ is not
meaningless: unexpectedly we get a solution of another extremal problem,
namely~\eqref{cetbelmin}.

To complete step II of constructing a Bellman function candidate we need to
write down formula~\eqref{eqcet1} or~\eqref{eqcet1min}. Everything is ready to
do this: we have expression~\eqref{shortB1} for $B$, where we need to
substitute~\eqref{t2} for $t_2$ and to take $t_1$ from the first line
of~\eqref{line}. So, we get
\begin{align}
B(x;m,M)&=-\frac{x_1^2}{2a\,[1-2a(M-x_3)]}+\left(m+\frac1{2a[1-2a(M-m)]}\right)x_2
\label{firstB}
\\
&=\frac{(x_3-m)x_2}{\bigl[1-2a(M-m)\bigr]^2\bigl[1-4a(M-x_3)\bigr]}+mx_2\,,
\label{secondB}
\end{align}
where $a=a^+$ is the positive root ($4(M-m)a^+\le 1$) of the cubic
equation~\eqref{cubicc} for Theorem~\ref{bc}, and $a=a^-$ is the negative one
for Theorem~\ref{bcmin}.

\end{proof}

\section{Foliation according to $B$.}
\label{foli}

We already saw that lines $L_{t_1, t_2}$ are given by equations~\eqref{line1}
and in the parametrization of the family of the extremal lines it is more
convenient to use parameter $a$ instead of $t_2$ and $\xi_1$ instead of $t_1$:
$$
\ell_{a,\xi_1} := L_{t_1, t_2}\,.
$$
For a given $\xi_1$ we have a ``fan'' of extremal lines starting at an
arbitrary point $\xi=(\xi_1,\xi_1^2,m)$ on the ``edge'' $\{x\colon
x_2=x_1^2,\,x_3=m\}$:
\begin{align*}
F_{\xi_1}^+&=\big\{\ell_{a,\xi_1}\colon 0\le a<\frac1{4(M-m)}\big\}\,,
\\
F_{\xi_1}^-&=\big\{\ell_{a,\xi_1}\colon -\infty<a\le0\big\}\,,
\end{align*}
where $F_{\xi_1}^+$ is the fan of extremal lines of $B_{\max}$ and
$F_{\xi_1}^-$ is the fan of extremal lines of $B_{\min}$.

Let us find the trace of our fan on the boundary  $x_3=M$. We remember that the
point of intersection $\zeta$ is given by~\eqref{zeta}:
$$
\begin{cases}
\zeta_1=&\displaystyle\!\!\frac{\xi_1}{1-2a(M-m)}\,,
\\
\zeta_2=&\displaystyle\!\!\frac{\xi_1^2}{1-4a(M-m)}\,.\rule{0pt}{25pt}
\end{cases}
$$
This is a parametric equations of the hyperbola $C_{\xi_1}$
\begin{equation}
\label{curve} \zeta_2=\frac{\zeta_1\xi_1^2}{2\xi_1 -\zeta_1}\,,
\end{equation}
tangent to the parabola $\zeta_2=\zeta_1^2$ at the point $(\xi_1, \xi_1^2)$.

The intersection of the fan $F_{\xi_1}^+$ with the plane $x_3=M$ is the piece
$C_{\xi_1}^+$ of this hyperbola between the points $\xi_1$ and $2\xi_1$, for
the fan $F_{\xi_1}^-$ it is the piece $C_{\xi_1}^-$ of this hyperbola between
$\xi_1$ and zero. For $\xi_1=0$ it is the axis $C_0^\pm=\{(0,\zeta_2)\}$.

Everything is on the ``upper lid" $\{x_3 =M, x_1^2 \le x_2\}$ of $\Omega$, and
the curves $C_{x_1}^+$, as well as the curves $C_{x_1}^-$, foliate this ``upper
lid". Lines $\ell_{a, \xi_1}\in F_{\xi_1}^\pm$ connect the points of
$C_{\xi_1}^\pm$ to the point $(\xi_1, \xi_1^2, m)$ on the boundary of $\Omega$
and each of two sets of lines $ \{F_{\xi_1}^\pm\colon-\infty<\xi_1<+\infty\}$
foliates some subdomain of $\Omega$. The lines of the fan  $F_{\xi_1}^+$
foliate the whole $\Omega$, whereas  $F_{\xi_1}^-$ foliate the subdomain
\begin{equation}
\big\{x=\{x_1,x_2,x_3\}\colon\frac{M-x_3}{M-m}<\frac{x_1^2}{x_2}\le1,\ x_3\le M
\big\}\,, \label{omega-min}
\end{equation}
where the function $B_{\min}$ is defined. Indeed, we have a solution $a=a^+$
of~\eqref{cubicc} for arbitrary point of $\Omega$, while a solution $a=a^-$
exists if and only if the point $x$ is from~\eqref{omega-min}.

\section{Lower boundary of $\Omega$.}

Notice that we found our function $B$ only in $\{x_1^2 \le x_2,\ m<x_3\le M\}$
for $B_{\max}$ and in $\{x_1^2 \le x_2,\ M-(M-m)\frac{x_1^2}{x_2}<x_3\le M\}$
for $B_{\min}$. However both these functions are continuous in the closed
domains. For $B_{\min}$ clearly the limit is $mx_2$ as $x_3\to
M-(M-m)\frac{x_1^2}{x_2}$, what corresponds to $a\to-\infty$. For $B_{\max}$
formula~\eqref{secondB} has indeterminancy $\frac{0}{0}$ when $x_3\to m$, what
corresponds $a\to\frac1{4(M-m)}$. But we can easily pass to the limit
in~\eqref{firstB} and obtain by continuity the values of the function
$B_{\max}(x_1, x_2, m; m, M)$ on the ``lower lid". Namely,
\begin{equation}
\label{ll} B_{\max}(x_1, x_2, m; m, M) = 4(M-m)(x_2 - x_1^2) + m x_2\,.
\end{equation}

\medskip

\noindent{\bf The norm of embedding becomes apparent.}
In fact, the best constant in the inequality
$$
B_{max} \le C\, x_2
$$ can be read from \eqref{ll}, and it is indeed $4(M-m)$. This can be easily proved, but notice that it is not even explicit in the formula for $B_{max}$!

\section{Reducing parameters $M$ and $m$.}

Before we start to prove Theorems~\ref{bc} and~\ref{bcmin}, we would like to
show that it is sufficient to prove them for $m=0$ and $M=1$. On the one hand,
we have
\begin{equation}
\Bel(x_1,x_2,x_3;m,M)=\Bel(x_1,x_2,\frac{x_3-m}{M-m};0,1)+mx_2.
\label{reduce-mM}
\end{equation}
This is direct consequence of definition. Indeed, while the homogeneity
condition~\eqref{hmB} follows from the definition if to compare a set of test
functions $\{\phi\colon \av{\phi}I=x_1,\,\av{\phi^2}I=x_2\}$ and the set
$\{\tilde\phi=t\phi\}$ with the same measure $\mu$ and the same set of point
masses $\alpha_J$, relation~\eqref{reduce-mM} follows from considering the same
set of test functions $\phi$ but comparing with a ``renormalized'' measure
$\tilde\mu(J)=\frac{\mu(J)-m|J|}{M-m}$ and point masses
$\tilde\alpha_J=\frac{\alpha_J}{M-m}$, then we shall have $\tilde m=0$, $\tilde
M=1$, and $\tilde x_3=\frac{x_3-m}{M-m}$.

\medskip

Notice that function $\Bel(x)$ from~\eqref{Bel} is defined differently than
$\Bel_{CET}(x;0,1)$. Its definition does not allow for measure $\mu$ on the
boundary. However, these functions turned out to be equal, extremal $\mu$ must
be zero. It is not clear how to see this immediately from the
definitions~\eqref{Bel} and~\eqref{cetbel} of $\Bel(x)$ and $\Bel_{CET}(x;0,1)$
correspondingly. However, since the supremum in the definition of $\Bel(x)$ is
taken over the smaller set of test measures ($\mu=0$), we have the inequality
$\Bel(x)\le\Bel_{CET}(x;0,1)$. So, the proof of Theorems~\ref{bc01}
and~\ref{bc} will consist in proving two inequalities: $\Bel_{CET}(x;0,1)\le
B(x;0,1)$ and $\Bel(x)\ge B(x;0,1)$.

\medskip

Let us introduce the notations:
\begin{equation}
\label{bece}
B_c(x):= B(x;1-c,1)\,,\Bel_c(x):=\Bel_{CET}(x;1-c,1)\,\, c\in (0,1]\,.
\end{equation}

Obviously we can rescale everything and consider only $B_c$ and $\Bel_c$.
Moreover, it is enough to consider only $c=1$.

In fact, our Bellman function candidate $B(x;m,M)$ clearly satisfies
relation~\eqref{reduce-mM}, therefore, to prove $\Bel_{CET}(x;m,M)=B(x;m,M)$ it is sufficient to check that
$$
\Bel_{CET}(x;0,1)=B(x;0,1)
$$
for all suitable arguments $x$. We will first prove that
$$
B(x)\ge \Bel(x;0,1)\,.
$$
 Then we prove

 $$
 B\le \Bel\,.
 $$
 Then the obvious inequality
 $$
 \Bel\le\Bel(x;0,1)
 $$
 finishes the proof.

\vskip1cm

\section{$B_c(x)\ge \Bel_c(x)$. Concavity.}
\label{Bellsm}

In what follows symbol $B(x)$ stands always for $B(x;0,1)$
from~\eqref{eqcet101} (which is the same as $B_c(x)$ with $c=1$).

\begin{theorem}
\label{conc}
Function $B_c$ is concave in the domain $\Omega_c$.
\end{theorem}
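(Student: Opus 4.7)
The proof of concavity will rest on the defining property of $B_c$ from Section~4: by construction $\det d^2 B_c \equiv 0$, so the Hessian has rank at most two throughout $\Omega_c$, and negative semi-definiteness is equivalent to requiring that both nonzero eigenvalues be nonpositive. The null direction is already identified: it is the tangent direction \eqref{direction} to the extremal line $\ell_{a,\xi_1}$ through the point $x$, along which $B_c$ is affine by construction. So concavity reduces to a two-dimensional sign check on the complementary plane.

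The plan is to compute $d^2 B_c$ explicitly and verify the standard minor test. I would start from the identity $\nabla B_c = (t_1, t_2, t_3)$, which follows from the representation \eqref{B} together with the constancy of the $t_i$ along each extremal line (so that $dB_c = t_1\,dx_1 + t_2\,dx_2 + t_3\,dx_3$, the terms $x_i\,dt_i$ cancelling by \eqref{invlinesB}). Hence $B_{x_i x_j} = \partial t_i/\partial x_j$, and each such derivative decomposes via $\partial a/\partial x_j$ and $\partial \xi_1/\partial x_j$, obtained by implicit differentiation of \eqref{line1}. The structural simplifications that should keep the computation manageable are that $t_2$ depends on $a$ alone while $t_1$ is linear in $\xi_1$, so several Hessian entries collapse to products of one-variable derivatives whose signs can be read off from the explicit formulas of Section~4.

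With the Hessian in hand, the rank deficiency reduces concavity to verifying (i) $B_{x_2 x_2} \le 0$, and (ii) one $2 \times 2$ principal minor $\ge 0$. For (i) I would use the factorization $B_{x_2 x_2} = t_2'(a)\cdot \partial a/\partial x_2$ together with the monotonicity of $s(a)$ in \eqref{cubicc} on $[0, 1/(4c)]$ recorded in Section~4, which makes the sign check immediate; item (ii) will reduce to an algebraic inequality in $a, \xi_1, c$. The hard part will be the algebraic bookkeeping produced by the implicit definition of $a$ as a root of the cubic \eqref{cubicc}; a cleaner organization is to pull back $d^2 B_c$ to the two-dimensional plane transverse to each $\ell_{a,\xi_1}$, which strips the known null direction from the outset and leaves a single $2 \times 2$ matrix in $(da, d\xi_1)$ to be bounded. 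As a sanity check, the boundary cases are transparent: on the lower lid $x_3 = 1-c$, formula \eqref{ll} gives $B_c = (1+3c)\,x_2 - 4c\,x_1^2$, whose Hessian is manifestly negative semi-definite, and the restriction to the upper lid $x_3 = 1$ is concave by direct inspection of the explicit form obtained at $\xi_3 = M$.
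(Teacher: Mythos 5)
Your plan — compute the Hessian $d^2B_c$ explicitly from $\nabla B_c=(t_1,t_2,t_3)$, use $\det d^2B_c\equiv 0$ which holds by construction, and then reduce to a sign check on a two-dimensional piece — is the same strategy the paper uses. But the precise criterion you state in the middle step is not sufficient as written, and this is exactly the point the paper has to be careful about. Knowing $\det M=0$ together with ``$M_{22}\le 0$ and one $2\times 2$ principal minor $\ge 0$'' does \emph{not} imply $M\le 0$: take $M=\operatorname{diag}(1,-1,0)$, for which $M_{22}=-1\le 0$, the $\{2,3\}$ minor equals $0\ge 0$, and $\det M=0$, yet $M$ has a positive eigenvalue. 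What does work — and what the paper actually establishes — is that a particular $2\times 2$ principal submatrix (the paper takes the upper-left block $N$ in the $(x_1,x_2)$ variables) is \emph{strictly} negative definite; by Cauchy interlacing this forces two eigenvalues of $M$ to be negative, and then $\det M=0$ kills the third. So you need to nail down which $2\times 2$ block you test and make the inequalities strict where possible.

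Once the correct criterion is in place, the remaining work is the algebraic bookkeeping you anticipate, and here the paper has a concrete device worth knowing: it splits the upper-left block as $N=L+\operatorname{diag}\!\bigl\{-\tfrac{1}{a[1-2a(1-x_3)]},0\bigr\}$, where $L$ is a rank-one matrix built from the $\partial a/\partial s$ terms whose determinant vanishes identically; since $L\le 0$ and the added diagonal perturbation is a single negative entry in the $(1,1)$ slot, $\det N=L_{22}\cdot\bigl(-\tfrac{1}{a[1-2a(1-x_3)]}\bigr)\ge 0$ drops out with almost no computation, and $N_{11}<0$ is immediate. This sidesteps the implicit-function tangle coming from the cubic \eqref{cubicc} that you were (rightly) worried about. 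Your alternative of projecting $d^2B_c$ onto a plane transverse to $\ell_{a,\xi_1}$ would also work and is conceptually cleaner, but in practice it trades the matrix bookkeeping for a comparable amount of computation in the $(da,d\xi_1)$ coordinates. Your boundary sanity checks are correct; note only that the lower-lid formula \eqref{ll} is the limit $x_3\to m$ of the branch defined for $x_3>m$, and its concavity in $(x_1,x_2)$ is a necessary but not sufficient check (it ignores the $x_3$ direction).
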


\begin{proof}

It is enough to prove this for $c=1$. Then rescaling proves the rest. In what
follows  $a$ is always the unique root of \eqref{cubicc} (with $M=1, m=0$)
lying in $[0,\frac1{4})$. In previous sections we calculated ($c=1$):
\begin{eqnarray}
\label{t1t2t3}
\Bone = t_1 = -\frac{x_1}{a[1-2a(1-x_3)]}\,\\
\Btwo = t_2 =  \frac{1}{2a(1-2ca)}\,\\
\Bthree = t_3 = \frac{x_1^2}{[1-2a(1-x_3)]}\,.
\end{eqnarray}

We can now compute Hessian $d^2\,B$, where (we use the notation $s=x_1^2/x_2$)
$$
d^2\,B(x): = M(x)=
$$
$$
 \begin{bmatrix}
 -\frac{1}{a[1-2a(1-x_3)]} +\frac{1-4a(1-x_3)}{a^2[1-2a(1-x_3)]^2}\cdot \frac{2x_1^2}{x_2}\cdot\frac{\partial a}{\partial s}\,,  & -\frac{1-4a}{a^2(1-2a)^2}\cdot \frac{x_1}{x_2}\cdot\frac{\partial a}{\partial s}\,,  &  -\frac{2x_1}{[1-2a(1-x_3)]^2}-\frac{x_1[1-4a(1-x_3)]}{a^2[1-2a(1-x_3)]^2} \cdot \frac{\partial a}{\partial x_3}\\
 -\frac{1-4a(1-x_3)}{a^2[1-2a(1-x_3)]^2} \cdot \frac{x_1^3}{x_2^2}\cdot\frac{\partial a}{\partial s}\,,  &  \frac{1-4a}{2a^2(1-2a)^2}\cdot \frac{x_1^2}{x_2^2}\cdot\frac{\partial a}{\partial s}\,,  &  -\frac{1-4a}{2a^2(1-2a)^2}\cdot \frac{\partial a}{\partial x_3}\\
  \frac{2x_1}{[1-2a(1-x_3)]^2} + \frac{1-x_3}{[1-2a(1-x_3)]^3}\cdot \frac{8x_1^3}{x_2}\frac{\partial a}{\partial s} &  -\frac{1-x_3}{[1-2a(1-x_3)]^3}\cdot\frac{4x_1^4}{x_2^2}\cdot\frac{\partial a}{\partial x_3} & -\frac{4x_1^2a}{[1-2a(1-x_3)]^3}+ \frac{4x_1^2(1-x_3)}{[1-2a(1-x_3)]^3}\cdot \frac{\partial a}{\partial x_3}
\end{bmatrix}
$$

\medskip

The element $M_{12}$ actually is equal to $M_{21}$, $M_{13}$ is equal to
$M_{31}$  by \eqref{cubicc}.
The reader may try to prove that this matrix is non-positive for every
$x\in\Omega$ by direct calculation. We prefer an oblique way of doing that. Put

$$
N(x) := \begin{bmatrix}
\displaystyle -\frac{1}{a[1-2a(1-x_3)]}+\frac{1-4a(1-x_3)}{a^2[1-2a(1-x_3)]^2}\cdot \frac{2x_1^2}{x_2}\cdot\frac{\partial a}{\partial s}\,,   \,\,  &    \displaystyle -\frac{1-4a}{a^2(1-2a)^2}\cdot \frac{x_1}{x_2}\cdot\frac{\partial a}{\partial s}\\
\displaystyle -\frac{1-4a(1-x_3)}{a^2[1-2a(1-x_3)]^2} \cdot
\frac{x_1^3}{x_2^2}\cdot\frac{\partial a}{\partial s}\,,  &
\displaystyle \frac{1-4a}{2a^2(1-2a)^2}\cdot \frac{x_1^2}{x_2^2}\cdot\frac{\partial
a}{\partial s}
\end{bmatrix}
$$

$$
L(x) := \begin{bmatrix}
\displaystyle \frac{1-4a(1-x_3)}{a^2[1-2a(1-x_3)]^2}\cdot \frac{2x_1^2}{x_2}\cdot\frac{\partial a}{\partial s}\,,   \,\,  &    \displaystyle -\frac{1-4a}{a^2(1-2a)^2}\cdot \frac{x_1}{x_2}\cdot\frac{\partial a}{\partial s}\\
\displaystyle -\frac{1-4a(1-x_3)}{a^2[1-2a(1-x_3)]^2} \cdot
\frac{x_1^3}{x_2^2}\cdot\frac{\partial a}{\partial s}\,,  &
\displaystyle \frac{1-4a}{2a^2(1-2a)^2}\cdot \frac{x_1^2}{x_2^2}\cdot\frac{\partial
a}{\partial s}
\end{bmatrix}
$$

Let us prove that
$$
M_{11}<0\,,\,\,L\le 0\,,\,\,N \le 0\,.
$$

We know that  function $s\rightarrow a(s)$ decreases from $\frac1{4}$ to $0$
when $s$ goes from $0$ to $1$. Therefore, $a'\ge 0$. Also $1-4a(1-x_3)= (1-4a)
+ 4a x_3 \ge 0$ because $a\le\frac1{4}$, $x_3 \ge 0$, and $a>0$.
Therefore, $L_{11}\le0, N_{22}= L_{22}\le 0, N_{11}<0$ for every $x\in
\Omega\cap \{0<x_3\}$. In particular, $M_{11} <0$.

On the other hand, it  is immediate to see that $L_{11} L_{22}-L_{12}L_{21}$ vanishes identically.
Then we can see right away that $N\le 0$ as well, but more than that
\begin{equation}
\label{detN} N< 0\,,\,\, \det N (x) \neq 0\,\,\,\,\forall x\in
\Omega\cap\{0<x_3\}\cap\{x_1\neq 0\}\,.
\end{equation}
In fact, it follows because $N= L + \text{diag}\{\displaystyle -\frac1{a[1-2a(1-x_3)]}\,, 0\}$
if we notice that
$$
-\frac1{a[1-2a(1-x_3)]}<0
$$
and $L_{22} \le 0$.

We would like to recall the reader that we obtained $B$ of
\eqref{firstB}--\eqref{secondB} just by solving the Monge--Amp\`ere  equation
$\det d^2B =0$ in $\Omega$. So, of course, $\det M(x)=0$. However, we
propose to the reader to check this as follows.

Fix a point $x\in\Omega\,, 0< x_3$, and consider the line $L_{t_1,t_2}$
passing through $x$. Its directional vector was computed, it is (see
\eqref{direction}, \eqref{AD}, \eqref{Aprime}, and \eqref{line} combined):
\begin{equation}
\label{d} d(x)= (\frac{2ax_1}{1-2a(1-x_3)}\,; \frac{4a(1-2a)^2
x_1^2}{(1-4a)[1-2a(1-x_3)]^2}\,; 1)^{T}\,.
\end{equation}

Actually we {\em built} $B$ from the condition that $d^2B$ annihilate the
vector field $d(x), x\in \Omega$. Also it is  easy to see that $M(x) d(x)  = 0$ from
the direct calculation.

We saw that for every $x\in \Omega\cap \{0< x_3\}$  we have $M_{11} <0,
M_{11}M_{22} - M_{12}^2 \le 0, \det M(x) =0$. By a well-known fact from linear
algebra we conclude that $d^2\,B=M(x)$ is negatively defined for all such $x$.
Concavity of $B$ is fully proved.

\end{proof}

\begin{theorem}
\label{Bx3}  In $\Omega$ we have $\Bthree(x) \ge x_1^2$. The same is true for $B_c$ in $\Omega_c$ for $c\in(0,1]$.
\end{theorem}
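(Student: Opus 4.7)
The cleanest approach is to combine the concavity of $B_c$ just proved in Theorem~\ref{conc} with the upper-lid boundary condition~\eqref{BCBUL}. Concavity of $B_c$ on $\Omega_c$ implies in particular that $B_{c,\,x_3 x_3} \le 0$, so for fixed $(x_1, x_2)$ the derivative $\Bthree$ is non-increasing in $x_3$ along the vertical segment $\{(x_1, x_2, x_3) : 1-c \le x_3 \le 1\} \subset \Omega_c$ (this segment is an interval because $\Omega_c$ is convex). On the upper lid $x_3 = 1$ we have the boundary identity $\Bthree(x_1, x_2, 1) = x_1^2$; this is precisely condition~\eqref{BCBUL} for $M=1$, and in fact it was built into the construction of $B_c$ through equation~\eqref{A} in Section~\ref{Bellsm}, where we imposed $A\,t_1^2 = \zeta_1^2$ on the surface $x_3 = M$. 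Monotonicity together with this boundary value gives, for every $x \in \Omega_c$,
$$
\Bthree(x_1, x_2, x_3) \,\ge\, \Bthree(x_1, x_2, 1) \,=\, x_1^2.
$$

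As a direct check (and as an alternative route that does not invoke concavity), one can read the inequality off from the explicit parametric formulas derived in Section~\ref{Bellsm}. Combining $t_3 = A(t_2) t_1^2 = a^2 t_1^2$ with $t_1 = -x_1/\bigl(a[1-2a(1-x_3)]\bigr)$ yields
$$
\Bthree \,=\, t_3 \,=\, \frac{x_1^2}{\bigl[\,1-2a(1-x_3)\,\bigr]^2}.
$$
Since the characteristic parameter $a = a(x)$ lies in $[0,\,1/(4c)]$ and $1-x_3 \in [0, c]$ on $\Omega_c$, the product $2a(1-x_3)$ is bounded above by $1/2$; hence the denominator $[1-2a(1-x_3)]^2$ lies in $[1/4,\,1]$, which immediately gives $\Bthree \ge x_1^2$. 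The equality case is precisely $x_3 = 1$.

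There is no substantive obstacle here: both ingredients — the concavity of $B_c$ and the upper-lid identity $\Bthree|_{x_3=1} = x_1^2$ — are already in place from the construction, and either suffices on its own. The only minor point worth flagging is the edge case $x_1 = 0$, where $a$ may not be uniquely determined by~\eqref{cubicc}, but there the inequality reduces to $\Bthree \ge 0$, which is obvious from the explicit formula.
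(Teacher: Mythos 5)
Your second argument (the direct computation) is essentially the paper's own proof: both reduce the claim to the identity $\Bthree = t_3 = x_1^2/[1-2a(1-x_3)]^2$ and then bound the denominator. The paper simply observes $0 < 1 - 2a(1-x_3) \le 1$ (from $a>0$ and $a \le 1/4$), while you produce the slightly sharper bound $1/2 \le 1-2a(1-x_3) \le 1$; either suffices. (Incidentally, the paper's displayed formula \eqref{t1t2t3} has an apparent typo — the denominator appears unsquared there — but the squared version, which you derived directly from $t_3 = A\,t_1^2 = a^2 t_1^2$ and the first line of \eqref{line}, is the correct one and is what the paper actually uses inside the proof.)

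Your first argument — concavity gives $B_{x_3 x_3}\le 0$, hence $\Bthree$ is non-increasing in $x_3$, and then the upper-lid identity $\Bthree|_{x_3=1}=x_1^2$ forces $\Bthree \ge x_1^2$ below the lid — is a genuinely different and valid route. It is conceptually cleaner in that it requires no explicit formula at all, only the two qualitative facts already established: Theorem \ref{conc} and the boundary condition \eqref{BCBUL} (which does hold for the constructed $B$ by design, via \eqref{A}/\eqref{AD}, as you correctly observe; one can also check it directly from the squared-denominator formula at $x_3=1$). The tradeoff is that it makes Theorem \ref{Bx3} logically dependent on Theorem \ref{conc}, whereas the paper's direct computation keeps the two independent — a small structural advantage, since together they feed into the proof of \eqref{MIBC}. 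Your edge-case remark about $x_1=0$ is harmless: there $\Bthree = 0 = x_1^2$ regardless of how $a$ is pinned down, so the inequality holds trivially.
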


\begin{proof}
The second claim follows from the first by rescaling. The first claim is
obvious from \eqref{t1t2t3}:  $\displaystyle\Bthree
=\frac{x_1^2}{[1-2a(1-x_3)]^2}\ge x_1^2$ because $0<1-2a(1-x_3)\le 1$. We
already noted the left inequality, it follows from $a>0$ and $a\le
\frac{1}{4}\le \frac{1}{2}$. The right inequality is just $a>0$.

\end{proof}

\begin{theorem}
\label{smaller} Let $c\in (0,1]$. In $\Omega_c$ we have $\Bel_c\le B_c$.
\end{theorem}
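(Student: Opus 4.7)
By the rescaling relation of Section~7 (and its analogue for $B$), it suffices to handle $c=1$, so I write $B=B_1$, $\Bel=\Bel_1$, $\Omega=\Omega_1$. Fix an admissible triple $(\phi,\mu,\{\alpha_J\}_{J\in D(I)})$ whose state at $I$ is $x$, and at every $J$ in the dyadic tree define the Bellman state $X(J):=(\av{\phi}{J},\av{\phi^2}{J},X_3(J))$ with $X_3(J):=|J|^{-1}\bigl(\mu(J)+\sum_{\ell\in D(J)}\alpha_\ell\bigr)$. By Cauchy--Schwarz and the Carleson constraint, $X(J)\in\Omega$ for every $J$. The theorem reduces to showing
\[
|I|\,B(x)\ge\int_I|\phi|^2\,d\mu+\sum_{J\in D(I)}\alpha_J\,\av{\phi}{J}^2,
\]
which upon taking the supremum over admissible data yields $\Bel\le B$.

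The core is the \emph{one-scale Bellman inequality}
\[
|J|\,B(X(J))\ge|J_+|\,B(X(J_+))+|J_-|\,B(X(J_-))+\alpha_{J_+}\av{\phi}{J_+}^2+\alpha_{J_-}\av{\phi}{J_-}^2.
\]
I plan to derive it via the \emph{lifted} states $Y^\pm:=(\av{\phi}{J_\pm},\av{\phi^2}{J_\pm},X_3(J_\pm)+\alpha_{J_\pm}/|J_\pm|)$. A direct bookkeeping using $|J_\pm|=|J|/2$ and the identity $|J|X_3(J)=|J_+|X_3(J_+)+|J_-|X_3(J_-)+\alpha_{J_+}+\alpha_{J_-}$ gives $X(J)=\tfrac12(Y^++Y^-)$. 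Integrating Theorem~\ref{Bx3} along the vertical segment from $X(J_\pm)$ to $Y^\pm$ yields $B(Y^\pm)\ge B(X(J_\pm))+(\alpha_{J_\pm}/|J_\pm|)\av{\phi}{J_\pm}^2$, and concavity (Theorem~\ref{conc}) at the midpoint gives $B(X(J))\ge\tfrac12(B(Y^+)+B(Y^-))$. Multiplying by $|J|$ and combining closes the step.

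Iterating this one-scale inequality from $I$ down to generation $N$ produces
\[
|I|B(x)\ge\sum_{|J|>2^{-N}|I|}\alpha_J\,\av{\phi}{J}^2+\sum_{|J|=2^{-N}|I|}|J|\,B(X(J)).
\]
As $N\to\infty$ the first sum captures every $\alpha$-contribution. For the leaf sum, at a.e.\ $t\in I$ (Lebesgue point of $\phi$ and $\phi^2$) the unique $J_N(t)\ni t$ at depth $N$ satisfies $(\av{\phi}{J_N(t)},\av{\phi^2}{J_N(t)})\to(\phi(t),\phi(t)^2)$, so $X(J_N(t))$ tends to the lower parabola $\{x_2=x_1^2\}$ where the boundary value $B(x_1,x_1^2,x_3)=x_1^2x_3$ applies; a Riemann-sum argument then yields $\sum_{|J|=2^{-N}|I|}|J|\,B(X(J))\to\int_I|\phi|^2\,d\mu$. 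This gives $|I|B(x)\ge T$, hence $B\ge\Bel$.

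The main obstacle is that the lifted states $Y^\pm$ need not lie in $\Omega$: the Carleson condition at $J$ only forces the average $\tfrac12(Y^+_3+Y^-_3)=X_3(J)\le 1$, while individually one can have $Y^\pm_3$ as large as $2$. My proposed fix is to extend $B$ above the upper lid $\{x_3=1\}$ by the affine rule $\tilde B(x_1,x_2,x_3):=B(x_1,x_2,1)+(x_3-1)x_1^2$, which is $C^1$ across the lid by the Neumann condition~\eqref{BCBUL} and preserves $\partial_{x_3}\tilde B\ge x_1^2$ everywhere. The delicate point will be verifying that the midpoint inequality $\tilde B(\tfrac12(Y^-+Y^+))\ge\tfrac12(\tilde B(Y^-)+\tilde B(Y^+))$ survives when one or both endpoints sits above the lid; this should follow from the explicit representation of $B$ in Section~\ref{ICET} by splitting the segment at its crossing with the lid and exploiting the concavity of $B(\cdot,\cdot,1)$ to dominate the quadratic-in-$x_1$ correction generated by the extension.
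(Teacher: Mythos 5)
Your high-level outline mirrors the paper's: reduce to $c=1$, establish a one-scale dyadic inequality, iterate, and handle the leaf sum by a limiting argument. But the one-scale step is where the proposal breaks down, and you have correctly located the danger yourself without resolving it.

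Your one-scale inequality is obtained by lifting the children's states to $Y^\pm=(\av{\phi}{J_\pm},\av{\phi^2}{J_\pm},X_3(J_\pm)+\alpha_{J_\pm}/|J_\pm|)$, then applying $\partial_{x_3}B\ge x_1^2$ along the vertical segment from $X(J_\pm)$ to $Y^\pm$ and midpoint concavity at $X(J)=\tfrac12(Y^++Y^-)$. This would work if $Y^\pm$ stayed in $\Omega$, but, as you note, the Carleson condition only controls the average $\tfrac12(Y_3^++Y_3^-)=X_3(J)\le 1$; individually $Y_3^\pm$ can exceed $1$. The proposed repair, namely the affine continuation $\tilde B(x_1,x_2,x_3)=B(x_1,x_2,1)+(x_3-1)x_1^2$ above the lid, does not yield a concave function. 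For $x_3>1$ the Hessian of $\tilde B$ has $\tilde B_{x_3x_3}=0$ and $\tilde B_{x_1x_3}=2x_1$, so the $(1,3)$ principal minor equals $-4x_1^2<0$ whenever $x_1\neq0$; the extended Hessian therefore has a positive eigenvalue in the $(x_1,x_3)$-plane. Nor does the weaker directional inequality survive. Taking a second-order perturbation $Y^\pm=(s\pm\mu\tau,C,1\pm\tau)$ with midpoint $X=(s,C,1)$ on the lid, one finds
$$
\tilde B(X)-\tfrac12\bigl(\tilde B(Y^+)+\tilde B(Y^-)\bigr)
=-\tfrac12\bigl(\mu^2 B_{11}+4s\mu+\tfrac12 B_{33}\bigr)\tau^2+O(\tau^3),
$$
where $B_{ij}=B_{x_ix_j}(s,C,1)$. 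Maximizing the bracket over $\mu$ shows that nonnegativity for all $\mu$ is equivalent to $B_{11}B_{33}\ge 8s^2$, and a direct computation using the explicit lid values $B_{33}=-4s^2a$ and $B_{11}=-(12a^2-6a+1)/(4a^3)$ reduces this to $4a^2-6a+1\ge0$, which \emph{fails} for $a\in\bigl(\tfrac{3-\sqrt5}{4},\tfrac14\bigr]$. Thus the midpoint inequality you want for $\tilde B$ is false in a whole region near the lid, and no amount of ``splitting at the crossing'' will restore it, because the defect is already present at the crossing point.

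The paper avoids this entirely by never leaving $\Omega$. Instead of lifting the children, it proves the main inequality~\eqref{MIBC}: with $x=X(J)$ and $x^\pm=X(J_\pm)$ all inside $\Omega$, the deficit $x_3-\tfrac12(x_3^++x_3^-)$ equals $\alpha_J/|J|\ge0$, and the right-hand side $x_1^2\bigl[x_3-\tfrac12(x_3^++x_3^-)\bigr]$ recovers the $\alpha$-contribution with $x_1$ evaluated at the \emph{parent}, not the children. The proof of~\eqref{MIBC} uses the concave tent $x_3(t)$ together with concavity (Theorem~\ref{conc}) and the gradient bound $B_{x_3}\ge x_1^2$ (Theorem~\ref{Bx3}) at the single interior point $t=0$, combined via $b(0)-\tfrac12(b(1)+b(-1))=-\tfrac12\int_{-1}^1(1-|t|)\,b''(t)$. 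Then Green's formula for the dyadic tree (Lemma~\ref{tree}), the a priori bound $B(x)\ge x_2x_3$, and Fatou close the limit. If you want to salvage your write-up, replace the lifted-state step by this tent-function argument for~\eqref{MIBC}; the rest of your outline is compatible with it.
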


\begin{proof}
Again it is enough to consider only the case $c=1$. Let us combine
Theorem~\ref{conc} and Theorem~\ref{Bx3} to obtain the following {\bf main
inequality}:

\begin{eqnarray}
\label{MIBC} B(x_1,x_2,x_3) -\half [B(x_1^+,x_2^+,x_3^+) +
B(x_1^-,x_2^-,x_3^-)] \ge x_1^2 [x_3 - \half (x_3^+
+ x_3^-)]\,,\,\,\,\, \\
\forall  x, x^+, x^- \in \Omega\,\,\text{such that}\,\, x_1=\half (x_1^ +
+x_1^-)\,,\,   x_2=\half (x_2^ +  +x_2^-)\,,\, \\ x_3=\half (x_3^ +  +x_3^-)\,.
\end{eqnarray}

In fact, this is easy. Put $x_i(t) = \half [x_i^+ (1+t) + x_i^-(1-t)]\,,\,\,
i=1,2$, and
$$
x_3(t)=\begin{cases} -t\, x_3^-  + x_3 (1+t)\,,\,\,\,\text{if}\,\,\,t\in [-1,0]\,,\\
x_3 (1-t) +t \,x_3^+\,,\,\,\,\text{if}\,\,\,t\in [0,1]\,.
\end{cases}
$$

Set

$$
b(t):= B(x_1(t), x_2(t), x_3(t))\,,\,\, t\in [-1,1]\,.
$$
  Then the {\bf main inequality} above transforms into

\begin{equation}
\label{bt}
b(0) -\half (b(1) + b(-1) \ge x_1(0)^2 [x_3(0)- \half (x_3(1) + x_3(-1)]\,.
\end{equation}
 To prove \eqref{bt} let us notice that  $x_3(t)$ is concave, and negative measure $x''_3$ is the following  $x''_3(t) = -2[x_3(0)- \half (x_3(1) + x_3(-1)]\cdot \delta_{0}$. Then, of course,  $b$ is concave  (see Theorems \ref{conc} and \ref{Bx3}) and measure
 $$
 b''(t) = (d^2B x'(t), x'(t))dt  + \Bthree(x(t)) x''_3(t) = (d^2B(x(t)) \,x'(t), x'(t))dt
 $$
 $$
  - 2 \Bthree(x(0))[x_3(0)- \half (x_3(1) + x_3(-1)]\delta_{0}\,.
 $$

 The following formula finishes the proof:
 $$
 b(0) -\half (b(1) + b(-1) = -\half\int_{-1}^1 (1-|t|) b''(t)\,.
 $$
 In fact, combining the last two formulae we obtain (we use Theorems \ref{conc} and  \ref{Bx3} again)

 \begin{eqnarray*}
 b(0) -\half (b(1) + b(-1) = \half \int_{-1}^1 (1-|t|) (-d^2B(x(t))\, x'(t), x'(t))dt\\  + \Bthree(x(0))[x_3(0)- \half (x_3(1) + x_3(-1)]
 \\\ge x_1(0)^2 [x_3(0)- \half (x_3(1) + x_3(-1)]\,,
 \end{eqnarray*}
 which is desired \eqref{bt}.

 \vspace{.1in}

 Now {\bf the main inequality} \eqref{MIBC} and {\bf the  convexity of the domain} $\Omega$ will allow us to finish the proof of the theorem.
 We need a simple  lemma about subharmonic functions on graphs.

 We consider the dyadic tree $\T$, whose vertices are denoted by $p_{\sigma}$, where $\sigma$ is the word formed by $\pm$, the empty word $\sigma_0$ is the root of the tree, and $|\sigma|$ is the length of the word.
 Function on $\T$ is called superharmonic if  its discrete Laplacian is non-negative
 $$
 \forall \sigma \in \T \,\,\, \Delta f (\sigma) = f(\sigma) -\half (f(\sigma +)+ f(\sigma -)) \ge 0\,.
 $$
 We can associate the boundary of the tree with segment $[0,1]$ in the following sense. For Lebesgue almost every point $x\in [0,1]$ we have a unique
 branch $b(x)=(\sigma_0,\sigma_1(x),...,\sigma_n(x),...)$ of $\T$ associated with it: just consider the dyadic form of $x$ and its $n$-th digit encodes whether to branch to $+$ or $-$ side on the $n$-th stage. Given $f$ on the tree we put
 $$
 F(x) = \liminf_{n\rightarrow \infty} f(\sigma_n(x))\,.
 $$

 Here is a lemma  which deserves to be called { \bf Green's formula for the dyadic tree}

 \begin{lemma}
 \label{tree}
 Given a positive  finite superharmonic  function $f$ on $\T$ we get
 $$
 \int_0^1 F(x)\,dx + \sum_{\sigma \in \T} 2^{-|\sigma|}\Delta f (\sigma) \le f(\sigma_0)\,.
 $$
 \end{lemma}
 \begin{proof}
 Obvious.
 \end{proof}

 Now fix $x= (x_1, x_2, x_3) \in \Omega$ and fix any function $\phi$ on the interval $I=[0,1]$ (we recall that nothing depends on the original interval) with
 \begin{equation}
 \label{initx}
 \langle \phi \rangle_I = x_1\,,\,\, \langle \phi^2 \rangle_I = x_2\,,
 \end{equation}
a non-negative measure $\mu$ on $I$, such that $0\le \mu(J)\le |J|$ for all dyadic subintervals $J$ of $I$, and any collection of non-negative numbers $\{\alpha_J\}_{J\in D(I)}$ such that
 \begin{equation}
 \label{initM}
 \frac{\mu(I)}{|I|}+\frac1{|I|} \sum_{J\in D(I) } \alpha_J = x_3 \in (0, 1]\,,\,\,
 \frac{\mu(\ell)}{|\ell|}+ \frac1{|\ell|} \sum_{J\in D(\ell) } \alpha_J \in [0, 1]\,\,\, \forall \ell\in D(I)\,.
 \end{equation}
 We immediately see that $\mu= w\,dx$, where $0\le w\le 1$.
 
 Intervals of $D(I)$ and dyadic tree $\T$ are in natural one to one correspondence. We call $I_{\sigma}$ the interval corresponding to vertex $\sigma$, $I$ corresponds to $\sigma_0$. Consider the following function. Take $B$ from \eqref{firstB} (with $M=1, m=0$) and put
 $$
 M(\sigma) := \frac1{|I_{\sigma}|}(\mu(I_{\sigma}) + \sum_{J\in D(I_{\sigma}) } \alpha_J)
 $$
  and
 $$
 f(\sigma) := B ( \langle \phi \rangle_{I_{\sigma}}, \langle \phi^2 \rangle_{I_{\sigma}}, M(\sigma))\,.
 $$
 Then of course
 \begin{equation}
 \label{init}
 f(\sigma_0) = B ( x_1, x_2, x_3)\,.
 \end{equation}
 By the main inequality \eqref{MIBC} it is a superharmonic function on the tree $\T$: more than that, we can estimate its discrete Laplacian from below. By this same \eqref{MIBC}
 $$
 \Delta f (\sigma) \ge  \langle \phi \rangle_{I_{\sigma}}^2 \Delta M(\sigma)\,.
 $$

 It is immediately seen that

 $$
 \Delta M(\sigma) =  \alpha_{I_{\sigma}}/|I_{\sigma}|\ge 0\,.
 $$
 Therefore, combining the last two inequalities we get
 $$
 \frac{2^{|\sigma|}\langle \phi \rangle_{I_{\sigma}}^2 \alpha_{I_{\sigma}}}{|I_{\sigma_0}|} \le \Delta f(\sigma)\,.
 $$

One can observe that $B(x_1,x_2,x_3) \ge x_2 x_3$. Then
$$
f(\sigma) \ge \langle\phi^2\rangle_{I_{\sigma}} M(\sigma) \ge \langle\phi^2\rangle_{I_{\sigma}} \cdot\frac{\mu(I_{\sigma})}{|I_{\sigma}|}\,.
$$

Then for almost every $x\in [0,1]$ we denote by $\sigma(x)$ the branch landing at it and by $\sigma_n(x)$ the $n$-th vertex on this branch. Then for a. e. $x$
$$
\liminf_{n\rightarrow \infty} f(\sigma_n(x)) \ge \lim_{n\rightarrow\infty}\langle\phi^2\rangle_{I_{\sigma}} \cdot\frac{\mu(I_{\sigma})}{|I_{\sigma}|}= \phi^2(x) w(x)\,.
$$

 Recall that $I=I_{\sigma_0}=[0,1]$. It is time to use Lemma \ref{tree}, the last inequalities, and Fatou's lemma (which is $\int\liminf_n f_n\le \liminf_n\int f_n$ for a sequence of non-negative functions $f_n$):

 \begin{equation}
 \label{sum} \frac{1}{|I_{\sigma_0}|}\int_{I_{\sigma_0}}\phi^2(x)\,w(x)dx+\frac{1}{|I_{\sigma_0}|}\sum_{\sigma \in \T} \langle \phi \rangle_{I_{\sigma}}^2 \alpha_{I_{\sigma}} \le B ( x_1, x_2, x_3)\,.
 \end{equation}
 We used here our lemma and \eqref{init}.

 But inequality \eqref{sum} proves that $\Bel(x; 0,1)\le B(x;0,1)$.  In fact, in the left hand side of \eqref{sum} we have arbitrary $\phi$ satisfying \eqref{initx} and arbitrary numbers $\alpha_{\cdot}$ satisfying \eqref{initM}. Function $\Bel(x_1, x_2, x_3)$ by definition is the supremum of such sums over all such functions and collections of numbers. Therefore, inequality $B_c(x) \ge \Bel_c(x)$ is completely proved and so is the theorem.

\end{proof}

\section{$B_c(x) \le \Bel_c(x)$. Extremal sequences.}
\label{Bellbig}

\begin{theorem}
\label{smaller} Let $c\in (0,1]$. In $\{x_1^2 \le x_2\,,\,\, 1-c <x_3 \le 1$ we
have $\Bel_c\ge B_c$.
\end{theorem}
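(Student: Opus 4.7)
The plan is to construct, for each admissible $x$, an explicit sequence of admissible data achieving the value $B_c(x)$ in the limit, thereby establishing $\Bel_c(x)\ge B_c(x)$. By the rescaling identity \eqref{reduce-mM} it suffices to treat the case $c=1$ (so $m=0$, $M=1$); write $B=B_1$ and $\Bel=\Bel_1$.

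Fix $x=(x_1,x_2,x_3)\in\Omega$ with $0<x_3\le 1$, and let $\ell=\ell_{a,\xi_1}$ be the extremal line through $x$ from Section~\ref{foli}, where $a=a^+(x)\in[0,\frac1{4})$ is the positive root of \eqref{cubicc}. This line connects the ``terminal'' point $\xi=(\xi_1,\xi_1^2,0)$ on the bottom boundary face (where $B=0$, corresponding to the constant test function $\phi\equiv\xi_1$ with no Carleson mass) to a point $\zeta=(\zeta_1,\zeta_2,1)$ on the top face (where $B_{x_3}=x_1^2$, so that the main inequality \eqref{MIBC} is tight at first order). Two features of the foliation drive the construction: $B$ is linear along $\ell$ (since $\ell$ lies in the kernel of $d^2B$), and the boundary gap $B_{x_3}(y)-y_1^2$ vanishes exactly on $\{x_3=1\}$. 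The extremal data will be built by following this foliation down the dyadic tree $\T(I)$.

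The construction proceeds inductively: assign a state $y(J)\in\Omega$ to each dyadic $J\subset I$ with $y(I)=x$. At each node $J$, we split $J=J^+\cup J^-$, choose states $y(J^\pm)$, and place a mass $\alpha_J\ge 0$ at $J$, subject to the averaging conditions $y_i(J)=\frac12(y_i(J^+)+y_i(J^-))$ for $i=1,2$ and $y_3(J)=\alpha_J/|J|+\frac12(y_3(J^+)+y_3(J^-))$. The choices are dictated by the foliation: the points $y(J^\pm)$ are taken to deviate only infinitesimally from the extremal line through $y(J)$, so that the Hessian contribution to the deficit in \eqref{MIBC} is of second order in the step size, while the distribution of masses $\alpha_J$ and the drift of states down the tree are arranged so that the states approach the top face $\{x_3=1\}$, where the first-order deficit factor $B_{x_3}-y_1^2$ vanishes. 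The limit data $(\phi,\mu,\{\alpha_J\})$ satisfies the marginal conditions $\av\phi I=x_1$, $\av{\phi^2}I=x_2$ by telescoping, and the Carleson constraint $\mu(J)+\sum_{\ell\in D(J)}\alpha_\ell\le|J|$ by the invariant $y_3(J)\le 1$.

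The value of the constructed data is analyzed by iterating \eqref{MIBC} along the tree, in the manner dual to the proof of \eqref{sum} via Lemma~\ref{tree}: one obtains the telescoping identity
\[
V_N=B(x)-\sum_{|\sigma|<N}2^{-|\sigma|}D_\sigma-2^{-N}\!\!\sum_{|\sigma|=N}B(y(\sigma)),
\]
where $D_\sigma\ge 0$ is the deficit in \eqref{MIBC} at node $\sigma$ and splits into a Hessian part (of second order in the local step size) and a boundary part proportional to $(B_{x_3}(y(\sigma))-y_1(\sigma)^2)\alpha_\sigma/|I_\sigma|$. The main obstacle is controlling the first-order boundary deficit: since $B_{x_3}>y_1^2$ strictly in the interior of $\Omega$, any mass placement produces a positive first-order loss, and this loss can be absorbed only by driving the states $y(\sigma)$ toward the top face along the main branches. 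The technical crux is thus to design the recursion so that the boundary-deficit sum telescopes to zero as $N\to\infty$ while the Hessian sum is controlled by the small step sizes and the leaf remainder $2^{-N}\sum B(y(\sigma))$ vanishes. Once this is achieved, $V_N\to B(x)$, which is the desired inequality $\Bel_c(x)\ge B_c(x)$.
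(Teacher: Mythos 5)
Your proposal correctly identifies the overall strategy — construct an explicit extremal sequence, reduce to $c=1$ by rescaling, exploit the foliation by extremal lines $\ell_{a,\xi_1}$, and use linearity of $B$ along $\ell$ together with the vanishing of the boundary gap $B_{x_3}-x_1^2$ on $\{x_3=1\}$ — and this is indeed the geometric heuristic underlying the paper's construction (cf.\ Section~\ref{Phin}). However, the argument as written has a genuine gap precisely where it matters: you never actually build the admissible data. Your paragraph on the inductive assignment of states $y(J)$, the ``infinitesimal'' deviations from the extremal line, and the ``telescoping identity'' for $V_N$ is a description of what a construction \emph{ought} to accomplish, not a construction. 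You concede this yourself: ``The technical crux is thus to design the recursion so that the boundary-deficit sum telescopes to zero \dots Once this is achieved, $V_N\to B(x)$.'' That sentence is the theorem restated, not a proof of it. One cannot conclude $\Bel_c\ge B_c$ from the mere assertion that some recursion with these qualitative properties exists; the whole content of part~IV is exhibiting such data and computing that the resulting sum really reaches $B$.

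The paper resolves the gap by an explicit, self-similar construction that makes no reference to Hessian/boundary deficit bookkeeping at all. For $x=(x_1,x_2,1)$ on the upper lid it partitions $I_0=[0,1]$ into $J=[0,2^{-n}]$ and $I_k=[2^{-k},2^{-k+1}]$, places $\alpha^n_{I_k}=2^{-k-n}$ and iterates self-similarly, and defines $\phi_n$ recursively as in~\eqref{phin} with two constants $c_n$, $d_n$ fixed by the marginal conditions \eqref{x1}--\eqref{x2}; the only lemma needed is the pointwise self-similarity bound~\eqref{geq}, and a direct computation then yields $\sum_\ell\langle\phi_n\rangle_\ell^2\alpha^n_\ell\ge x_2/c_n^2\to(\sqrt{x_2}+\sqrt{x_2-x_1^2})^2=B(x_1,x_2,1)$. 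Interior points with $0<x_3<1$ are then handled exactly as you suggest, by linearity of $B$ along $L_{t_1,t_2}$, combining a constant test function at the endpoint $\xi$ with the $\phi_n$ at the upper-lid endpoint. So your geometric intuition points in the right direction, and your treatment of the extension from the upper lid to the interior matches the paper's. But to turn your plan into a proof you still need to (i) write down the data explicitly (the self-similar $\phi_n$ with the scaling $d_n$ on one dyadic child is the non-obvious choice), (ii) verify the Carleson constraint and the marginal conditions for that data, and (iii) compute the limiting value and check it equals $B$; none of these steps is carried out in your write-up.

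Two minor inaccuracies worth noting: the gap $B_{x_3}-x_1^2$ vanishes not only on $\{x_3=1\}$ but also on $\{x_2=x_1^2\}$ (where $a=0$), so ``vanishes exactly on $\{x_3=1\}$'' is imprecise; and the quantity $V_N$ in your telescoping identity is never defined, which makes the claimed identity impossible to check.
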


\begin{proof}
Below $I_0=[0,1]$. We consider first only $c=1$ and  $x= (x_1, x_2, 1)$ on the
``upper lid" of $\Omega$. We want to fix a large integer $n$ and to  construct
function $\phi_n$ and sequence $\{\alpha^n_{\ell}\}_{\ell\in D(I_0)}$ in such a
way that the sum
$$
\sum _{\ell\in D(I_0)} \langle \phi_n\rangle_{\ell}^2 \alpha^n_{\ell} > B(x_1,
x_2, 1) - \varepsilon_n\,,\,\,\,\,\,\varepsilon_n\rightarrow 0\,.
$$
where
\begin{equation}
\label{x1} \langle \phi_n\rangle_{I_0} = x_1
\end{equation}
and
\begin{equation}
\label{x2} \langle \phi_n^2\rangle_{I_0} = x_2\,.
\end{equation}
 Fix a large integer $n$ and split $I_0= [0,1]$ into
the union of $n+1$ intervals $I_{k}=[2^{-k}, 2^{-k+1}]\,,\,\, 1\leq k \le n$,
and $J =[0, 2^{-n}]$. Maps $m_k(x) = 2^{k}x -1$ map $I_{k}\,,\,\,1\leq k \le n$
onto $I_0$. Consider preimages of $I_s$, $s=1,...,n$, under $m_k, k=1,...,n$.
We obtain $I_{ks}$.  Then $m_s \circ m_k$ maps $I_{ks}$ onto $I_0$. We also
consider $J_k$, $k=1,...,n$, preimages of $J$ under $m_k$. We iterate this
procedure obtaining $I_{k_1k_2...k_m}$, $J_{k_1k_2...k_{m-1}}$. Put
$\alpha^n_{I_k}:= 2^{-k}\cdot 2^{-n}= |J_k|\,,\,\, 0\le k\le n$. We put
$\alpha^n_{k_1k_2...k_m}=|J_{k_1k_2...k_m}|$.  For all other dyadic intervals we
put $\alpha_I=0$. We call $J$ the $0$ generation, and all $J_{k_1k_2...k_m}$
the $m$-th generation. Obviously sum of lengths of all generations of $J$'s is
equal to $1$. There for we have proved
$$
\sum_{\ell \in D(I_0)} \alpha_{\ell} =1\,.
$$
Moreover, it is easy to see that all $J$'s are disjoint (except may be the end-points) so
$$
\sum_{\ell \in D(I_0)\,,\,\ell\subset I} \alpha_{\ell} \le |I|\,\,\forall I\in D(I_0)\,.
$$
Now we are going to define $\phi_n$.
\begin{equation}
\label{phin}
\phi_n(x) =\begin{cases}
c_n x_1\,\,\,\,\,x\in J\\
\phi_n(m_k(x))\,\,\,\, x\in I_k\,\,\, k=2,...,n\\
d_n \phi_n(m_1(x))\,\,\,x\in I_1\,.
\end{cases}
\end{equation}
Here $c_n$ and $d_n$ are constants which we will define now. First of all
notice that this recursive definition of $\phi_n$ really defines it a.e. (as
long as $c_n, d_n$ are prescribed) just because we have it already defined on
the $0$ generation of $J$'s (that is on $J$ itself). But then it is defined on
$m_k$-pre-images of $J$ (so on the first generation of $J$'s), but then it is
defined on pre-images of pre-images (second generation of $J$'s), et cetera...
But The union of all generations of $J$'s gives us $I_0$ up to a set of zero
Lebesgue measure.

We define $c_n, d_n$ from requirements \eqref{x1}, \eqref{x2}:
\begin{eqnarray*}
\label{cndn}
\frac1{2^n}c_n x_1 + (1-\frac1{2^n} -\frac12) x_1 +\frac12 d_n x_1 = x_1\\
\frac1{2^n}c_n^2 x_2 + (1-\frac1{2^n} -\frac12) x_2 +\frac12 d_n^2 x_2 = x_2\,.
\end{eqnarray*}

Then we get (using our notations $s=x_1^2/x_2$ and choosing proper root of
quadratic equation)
\begin{equation}
\label{cn} c_n =\frac{(2^{n-1} +1) 1/s - \sqrt{2^{n-1}(2^{n-1} +1) (x_1^2-
1/s)}}{2^{n-1} + 1/s} \rightarrow 1/s - \sqrt{ 1/s^2 - 1/s}\,.
\end{equation}

\begin{lemma}
\label{geql}
We have
\begin{equation}
\label{geq}
\phi(x)c_n\chi_{J_{k_1...k_m}} \leq c_n\langle \phi_n\rangle_{I_{k_1...k_m}}\,,\,\,\,\forall x\in J_{k_1...k_m}\,,
\end{equation}
where empty sequence $k_1...k_m$ corresponds to $I_0$ and $J$ correspondingly.
\end{lemma}

\begin{proof}
Let us start with the empty sequence. Then we should check that
$$
\phi(x) \leq c_n\langle \phi_n\rangle_{I_{0}}  \,,\,\,\,\forall x\in J\,.
$$

But the average over $I_0$ is $x_1$, and $\phi(x) = c_n x_1$ on $J$ by
definition. The rest of the lemma follows from the self-similarity of $\phi_n$,
in fact, it is either the same function $\phi_n$ ``pre-shrunk'' to a smaller
interval, or it is a  fixed multiple of this. So we are done.
\end{proof}

Let us now square \eqref{geq} and integrate it. Then sum over all $J$'s of all generations.
We will get from \eqref{cn}
$$
\sum_{\ell\in D(I_0)}\langle \phi_n\rangle^2 \alpha^n_{\ell} \ge
\frac{x_2}{c_n^2} = x_1^2\frac{1/s}{(1/s - \sqrt{ 1/s^2 - 1/s})^2}
-\varepsilon_n\,.
$$
But the first term is
\begin{eqnarray*}
\frac{ x_1^2}{(\sqrt{1/s} - \sqrt{ 1/s - 1})^2}  = x_1^2 (\sqrt{1/s} +\sqrt{
1/s - 1})^2\\=
x_1^2\Big(\sqrt{\frac{x_2}{x_1^2}}-\sqrt{\frac{x_2}{x_1^2}-1}\Big)^2=
(\sqrt{x_2} + \sqrt{x_2-x_1^2})^2\,.
 \end{eqnarray*}
We finally get
\begin{equation}
\label{geq1} \sum_{\ell\in D(I_0)}\langle \phi_n\rangle^2 \alpha^n_{\ell} \ge
(\sqrt{x_2} + \sqrt{x_2-x_1^2})^2 -\varepsilon_n\,.
\end{equation}

Let us now look at formula \eqref{secondB} with $x_3=1$ and plug $a$ from
\eqref{cubicc} with $x_3=1$. Readily, \eqref{cubicc} with $x_3=1$ becomes
$\frac{1+4a}{(1+2a)^2} =\frac{x_1^2}{x_2}$. From here and from negativity of
$a$ we obtain
$$
\frac1{1+2a} = 1 + \sqrt{\frac{x_2-x_1^2}{x_2}}\,.
$$
But formula \eqref{secondB} with $x_3=1$  (and $c=1$) gives
$$
B(x_1, x_2, 1) =\Big( \frac1{1+2a}\big)^2 x_2 =\Big( 1 +
\sqrt{\frac{x_2-x_1^2}{x_2}}\Big)^2 x_2 = (\sqrt{x_2} + \sqrt{x_2-x_1^2})^2 \,.
$$
Using \eqref{geq1} we finally we get
\begin{equation}
\label{geq2} \sum_{\ell\in D(I_0)}\langle \phi_n\rangle^2 \alpha^n_{\ell} \ge
B(x_1, x_2, 1)  -\varepsilon_n\,,
\end{equation}
and our theorem is proved for $x_3=1, c=1$.

If $c=1$ but $0< x_3 <1$, then point $x=(x_1, x_2, x_3)\in \Omega$ lies on a
certain line $L_{t_1, t_2}$. Function $B$ is linear on this line. So combining
constant function and function $\phi_n$ we will obtain the general inequality.

For $c\in (0,1)$ the proof is actually exactly the same.
\end{proof}

\subsection{Geometry of foliation: explanation of the choice of $\phi_n$ in \eqref{phin}.}
\label{Phin}

\bigskip

This section contains a heuristic explanation why function $\phi_n$ was chosen
in this form in~\eqref{phin}.

The main point here is a certain geometric observation concerning the foliation
of the domain $\Omega$ by lines $L_{t_1t_2}$. Consider the foliation of the
upper lid $\{X_3=1\}\cap\Omega$ by parabolas $P_A:= \{ X_2 = A\,X_1^2, \,
X_3=1\}\,\,, A\ge 1$. Then we have the following easy geometric observation.

\begin{theorem}
\label{foliupper} Let $L_{t_1,t_2}$ intersects the upper lid at the point
$(u_1,u_2,1)$ and let $(u_1,u_2,1)\in P_A$. Then the projection of the line
$L_{t_1t_2}$ onto the upper lid is tangent to $P_A$ at point $(u_1,u_2,1)$.
\end{theorem}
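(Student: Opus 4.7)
The plan is to extract the projection of $L_{t_1,t_2}$ onto the plane $\{x_3=1\}$ directly from the parametric form \eqref{line1} (taking $M=1$, $m=0$), and then to compare slopes at the base point.

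First I would observe that with $\xi_1$ and $a$ held fixed, the equations \eqref{line1} read
\[
x_1(x_3)=\frac{1-2a(1-x_3)}{1-2a}\,\xi_1,\qquad x_2(x_3)=\frac{1-4a(1-x_3)}{1-4a}\,\xi_1^2,
\]
so \emph{both} coordinates $x_1$ and $x_2$ are affine functions of $x_3$. Consequently the image of $L_{t_1,t_2}$ under the vertical projection $(x_1,x_2,x_3)\mapsto(x_1,x_2,1)$ is a straight line in the upper lid $\{x_3=1\}$, and it is enough to show that this straight line is tangent to $P_A$ at $(u_1,u_2,1)$.

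Second, setting $x_3=1$ gives the point of intersection
\[
u_1=\frac{\xi_1}{1-2a},\qquad u_2=\frac{\xi_1^2}{1-4a},
\]
from which one reads off
\[
A=\frac{u_2}{u_1^2}=\frac{(1-2a)^2}{1-4a}.
\]

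Third, I would compute the slope of the projected line by differentiation in $x_3$,
\[
\frac{dx_2/dx_3}{dx_1/dx_3}=\frac{4a\xi_1^2/(1-4a)}{2a\xi_1/(1-2a)}=\frac{2\xi_1(1-2a)}{1-4a},
\]
and the slope of $P_A$ at $(u_1,u_2)$,
\[
2Au_1=2\cdot\frac{(1-2a)^2}{1-4a}\cdot\frac{\xi_1}{1-2a}=\frac{2\xi_1(1-2a)}{1-4a}.
\]
The two slopes coincide, so the projected line is tangent to $P_A$ at $(u_1,u_2)$, which is exactly the claim. The argument is a three-line algebraic verification; the only thing to be careful about is that one is projecting a genuine straight line (hence the opening remark on affineness), so there is no real obstacle.
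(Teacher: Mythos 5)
Your proof is correct and takes essentially the same route as the paper: compute the slope of the vertical projection of the line from the parametric equations and compare it to the slope of $P_A$ at the intersection point. You work from \eqref{line1} in the $(\xi_1,a)$ parameters while the paper uses \eqref{line} in the $(t_1,a)$ parameters, but the computation is the same (and your explicit remark that the projection of a line is a line is a minor, harmless addition).
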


\begin{proof}
Fix $t_1,t_2$, which fixes $a=a(t_2)$ and consider formulae~\eqref{line} that
give us $L_{t_1t_2}$. Then we can compute the slope of the projection of
$L_{t_1t_2}$ on plane $(X_1,X_2)$. In fact from~\eqref{line}
$$
\frac{\partial X_1}{\partial X_3} = -2t_1 a^2\,, \,\,\, \frac{\partial
X_2}{\partial X_3} = \frac{4t_1^2 a^3(1-2a)^2}{1-4a}\,.
$$
Therefore, the slope of this projection is the ratio of these quantities:
\begin{equation}
\label{slope1}
-\frac{2t_1a(1-2a)^2}{1-4a}\,.
\end{equation}

On the other hand the point of intersection of $L_{t_1t_2}$ wit the plane
$\{X_3=1\}$ is  (again see~\eqref{line})
$$
U=(u_1,u_2,1)\,,\,\,\text{where}\,\, u_1 = -t_1 a\,,\,\,u_2 = t_1^2\,a^2(1-2a)^2\,.
$$
Therefore, point $U$ lies on parabola $v_3=1, v_2 = \frac{(1-2a)^2}{1-4a}\cdot
v_1^2$, and, hence, the slope of this parabola at $U$ is
\begin{equation}
\label{slope2}
\frac{2(1-2a)^2}{1-4a}\cdot u_1\,.
\end{equation}
Now plug $u_1 = -t_1 a$ into~\eqref{slope2} to see that it is equal to the
formula for slope in \eqref{slope1}. We are done.

\end{proof}

\vspace{.1in}

Let us see what influence this theorem has on the choice of $\phi_n$.

Given $L=L_{t_1t_2}$ intersecting  the upper lid at $u=(u_1,u_2,1)$. To
construct $\phi$ corresponding to $u$ we make an infinitesimally small jumps
$u+\ve e$, $u-\ve e$ along $L$ (here $e$ is a unit vector parallel to $L$ and
with positive third coordinate). If we would know how to build the extremal
functions $\phi_{\pm}$ for $u\pm\ve e$ we would just put the one for $u+\ve e$
on the right half $I_+$ of $I$ and the one for $u-\ve e$ on the left half $I_-$
of $I$. The one for $u+\ve e$ is definitely unknown. But the one for $u-\ve e$
is ``known" in the sense that we can jump from it to $u-2\ve e=u-\ve e-\ve e$
and $u=u-\ve e+\ve e$. Then we can restart the procedure again. Thus, the
explanation for the self-similar structure of $\phi_n$ and for the second line
of~\eqref{phin}.

Notice that after several jumps along $L$ we will find ourselves at the
end-point $u-n\ve e$ of $L$ where $X_3=0, X_2=X_1^2$ (especially if we choose
dyadic number $\ve$). At this moment we know that extremal function is just
constant $X_1=:c_n\, u_1$. This is the explanation for the first line in the
definition of $\phi_n$.

To explain $d_n$ let us recall that we do not yet know how to build the
extremal function for the point $u+\ve e$. Moreover, we cannot do that. This
point is outside of $\Omega$. So here is heuristics. Instead of jumping into
$u+\ve e$ let us jump to point $p_{\ve}=$ projection of $u+\ve e$ onto the
upper lid. We know by Theorem~\ref{foliupper} that $p_{\ve}$ lies on the
tangent line to parabola $P_A$: $x_2 =A \,x_1^2$ at point $u$. So, in a sense,
we can think that $p_{\ve}$ lies on $P_A$ (because $\ve$ is very small). Or
rather we can choose $U=p_{\ve} + O(\ve^2)$ such that $U\in P_A$. Let
$U=(U_1,U_2,1)$.

Now we need to understand the following: suppose we know an almost extremal
function for $u$ (our future $\phi_n$). Do we know it for $U$ then? The answer
is yes, because if we know the extremal function for one point of parabola than
the extremal function for another point of the same parabola is acquired just
by multiplication of the first function on a suitable constant.

This is the explanation for the third line of \eqref{phin}.

\section{Step I for Carleson embedding theorems in $L^p$ with $p>1$}
\label{pICET}

Now, we briefly consider how to find the Bellman function for the Carleson
embedding operator acting on arbitrary $L^p$. We shell follow the same scheme
and we shall start with an evident definition of the Bellman function.

\begin{multline}
\Bel_{\max}(x_1,x_2,x_3;m,M):=\sup\Big\{\int_I
|\phi(t)|^p\,d\mu(t)\,+\!\!\sum_{J\in D(I)} |\,\av{\phi}J|^p \alpha_J\colon
\\
\av{\phi}I=x_1,\quad \av{|\phi|^p}I=x_2,\quad
\frac1{|I|}\Bigl(\mu(I)\,+\!\!\sum_{J\in D(I)}\alpha_J\Bigr)=x_3,
\\
m|J|\le\mu(J)\,+\!\!\sum_{\ell\in D(J)}\alpha_{\ell}\le M|J|\quad\forall J\in
D(I)\Big\}\,, \label{pcetbel}
\end{multline}
\begin{multline}
\Bel_{\min}(x_1,x_2,x_3;m,M):=\inf\Big\{\int_I
|\phi(t)|^p\,d\mu(t)\,+\!\!\sum_{J\in D(I)} |\,\av{\phi}J|^p \alpha_J\colon
\\
\av{\phi}I=x_1,\quad \av{|\phi|^p}I=x_2,\quad
\frac1{|I|}\Bigl(\mu(I)\,+\!\!\sum_{J\in D(I)}\alpha_J\Bigr)=x_3,
\\
m|J|\le\mu(J)\,+\!\!\sum_{\ell\in D(J)}\alpha_{\ell}\le M|J|\quad\forall J\in
D(I)\Big\}\,. \label{pcetbelmin}
\end{multline}

Functions $\Bel$ are defined in
\begin{equation}
\Omega:= \{x=(x_1,x_2,x_3)\colon\ x_1^p \le x_2 ,\ m\le x_3\le M\}\,.
\label{pcetdom}
\end{equation}

As before, the Bellman functions $\Bel$ do not depend on the interval $I$. The
difference is that now we consider non-negative test functions only.

From the definition of function $\Bel$ above we can immediately see that it
satisfies the inequality (just concavity or convexity)

\begin{align}
\label{pmiB} \Bel_{\max}(x) - \half \big(\Bel_{\max}(x^-) +
\Bel_{\max}(x^+)\big) &\ge 0\,,\quad\forall x^\pm\in\Omega\,,\quad x= \half
(x^+ + x^-)\,.
\\
\label{pmiBmin} \Bel_{\min}(x) - \half \big(\Bel_{\min}(x^-) +
\Bel_{\min}(x^+)\big) &\le 0\,,\quad\forall x^\pm\in\Omega\,,\quad x= \half
(x^+ + x^-)\,.
\end{align}
Note that $\Omega$ (see~\eqref{pcetdom}) is convex and for all $x^\pm\in\Omega$
we have $x\in\Omega$.

The boundary conditions also follow  from the definition:

\begin{equation}
\label{pBCB} \Bel(x_1, |x_1|^p, x_3) = |x_1|^p\, x_3\,.
\end{equation}

\begin{equation}
\label{pBCBUL} \frac{\partial\Bel}{\partial x_3} (x_1,x_2,M;m,M) = |x_1|^p\,.
\end{equation}

It is equally easy to see that homogeneity condition holds

\begin{equation}
\label{phmB} \Bel(tx_1, t^px_2, x_3) = t^p\,\Bel(x_1,x_2,x_3)\,,\quad t>0,.
\end{equation}

As before, we are looking for a function $B$ satisfying the following {\bf
degeneration}  condition:
\begin{equation}
\label{pdegdetB} \det (d^2 B) = \det \begin{pmatrix}
B_{x_1x_1} & B_{x_1x_2} & B_{x_1x_3}\\
B_{x_2x_1} & B_{x_2x_2} & B_{x_2x_3}\\
B_{x_3x_1} & B_{x_3x_2} & B_{x_3x_3}\end{pmatrix} = 0\,,\qquad \forall x\in
\Omega\,.
\end{equation}

\bigskip

\begin{lemma}
\label{claim}
 There is a simple algorithm to find the function
$B(x;m,M)$  that solves Monge--Amp\`ere equation~\eqref{pdegdetB} in the
domain~\eqref{pcetdom} with boundary condition~\eqref{pBCB}--\eqref{pBCBUL} and
homogeneity condition~\eqref{phmB}.
\end{lemma}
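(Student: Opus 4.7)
The plan is to transcribe the algorithm executed for $p=2$ in Section~\ref{ICET}, carefully tracking where the exponent $p$ enters. Assuming, as before, that $d^2B$ has rank $2$ at generic points of $\Omega$, the method of characteristics of Section~\ref{MAE} immediately gives
\begin{equation*}
B(x) = t_0 + t_1 x_1 + t_2 x_2 + t_3 x_3, \qquad dt_0 + x_1\,dt_1 + x_2\,dt_2 + x_3\,dt_3 = 0,
\end{equation*}
with $t_i = B_{x_i}$. Taking $(t_1,t_2)$ as independent parameters, the integral lines $L_{t_1,t_2}$ are again described by the two equations~\eqref{linesB}.

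Next I would differentiate the homogeneity identity~\eqref{phmB} at $t=1$, obtaining $x_1 B_{x_1} + p\,x_2 B_{x_2} = p\,B$. Substituting the parametric form of $B$, this becomes the single plane
\begin{equation*}
p\,t_0 + (p-1)\,t_1 x_1 + p\,t_3 x_3 = 0,
\end{equation*}
and imposing that it contain both the base point $\bigl(-\zeroone,-\zerotwo,0\bigr)$ and the direction $\bigl(-\threeone,-\threetwo,1\bigr)$ of $L_{t_1,t_2}$ yields the two separable first order PDEs
\begin{equation*}
p\,t_3 = (p-1)\,t_1\,\threeone, \qquad p\,t_0 = (p-1)\,t_1\,\zeroone,
\end{equation*}
whose general solution is $t_3 = A(t_2)\,t_1^{p/(p-1)}$ and $t_0 = D(t_2)\,t_1^{p/(p-1)}$. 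This specializes to~\eqref{AD} when $p=2$.

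Now I would impose the two boundary conditions. Let $\zeta = (\zeta_1,\zeta_2,M)$ and $\xi = (\xi_1,\xi_1^p,\xi_3)$ denote the intersections of $L_{t_1,t_2}$ with the upper lid $\{x_3=M\}$ and with the side $\{x_2=x_1^p\}$ respectively. Writing the two line equations at each of these points, together with~\eqref{pBCB}--\eqref{pBCBUL} (which, via the homogeneity identity $B = \tfrac1p\,x_1 t_1 + x_2 t_2$, translate into $A\,t_1^{p/(p-1)} = \zeta_1^p$ and $\tfrac1p\,\xi_1 t_1 + \xi_1^p t_2 = \xi_1^p \xi_3$), gives a closed system of six equations in the six unknowns $A,D,\zeta_1,\zeta_2,\xi_1,\xi_3$. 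Setting $a(t_2) := \tfrac{p}{p-1}(AM+D)$ eliminates $A$ and $D$ via $A = a^p$ and $D = \tfrac{p-1}{p}\,a - M\,a^p$, in exact parallel with~\eqref{A2}--\eqref{D}. The auxiliary $\eta := 1 - \tfrac{p}{p-1}\,a^{p-1}(M-\xi_3)$ then decouples $\xi_1$ and reduces the remaining three equations at $\xi$ to a single ODE in $(a,\eta)$ as functions of $t_2$. Discarding constant-$\eta$ solutions on geometric grounds (exactly as $\eta=\pm 1$ were eliminated after~\eqref{aeta}) should force $\xi_3$ to be the constant $m$, so that the extremal lines form a fan emanating from the lower edge $\{x_2=x_1^p,\ x_3=m\}$ and foliating the interior of $\Omega$.

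With this foliation in hand, $B$ is recovered from $B = \tfrac1p\,x_1 t_1 + x_2 t_2$ after solving the implicit equation in $s := x_1^p/x_2$ (the analogue of~\eqref{cubicc}) that expresses $a$ as a function of $x$. I expect the main obstacle to be two-fold. First, for $p\ne 2$ this implicit equation is no longer polynomial, so uniqueness of the relevant root $a(x)$ must be argued by a monotonicity analysis in $a$ rather than by a direct root count for a cubic. Second, the constant-$\xi_3$ step, which for $p=2$ rested on the factorization $(\eta-1)(\eta+1)$ buried inside~\eqref{aeta}, requires a more delicate analogue in the general case. Everything else is a routine transcription of the $p=2$ argument with $p/(p-1)$ and $p$-th powers in place of $2$ and squares.
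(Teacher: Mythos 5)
Your proposal follows essentially the same route as the paper: homogeneity differentiated at $t=1$ yields the extra plane, subtracting the line equation produces the pair of separable PDEs with dual exponent $q=p/(p-1)$, the ansatz $t_3=A(t_2)|t_1|^q$, $t_0=D(t_2)|t_1|^q$ is substituted, the boundary data on the upper lid and on $\{x_2=|x_1|^p\}$ close the system, the change of variables $a=q(AM+D)$ and $\eta=1-qa|a|^{p-2}(M-\xi_3)$ reduces everything to a scalar ODE in $t_2$, and constancy of $\xi_3$ (hence $\xi_3=m$) follows before $B=\tfrac1p x_1t_1+x_2t_2$ is read off. So the plan is right. Two minor imprecisions are worth fixing, though. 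First, you consistently omit absolute values: the paper writes $|t_1|^q$, $|a|^p$, $|\xi_1|^p$, $a|a|^{p-2}$, etc., and this is not cosmetic, since $q=p/(p-1)$ is generically non-integer and the quantities $t_1$, $\xi_1$, $a$ can be negative (indeed $a<0$ is exactly the $B_{\min}$ branch); without the absolute values several of your formulas are undefined. Second, the two ``obstacles'' you flag turn out to be milder than you anticipate: the factor that cancels in \eqref{paeta} is $|\eta|^p-1$, whose zeros are still exactly $\eta=\pm1$, so the discard is verbatim; and the monotonicity analysis of $s(a)$ on $(-\infty,(pq(M-m))^{-(q-1)})$ is carried out in the paper and is routine (monotone increase on $a<0$ to the value $1$, monotone decrease on $a>0$ from $1$ to $0$), so no new idea is needed for the root count.
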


\bigskip

\begin{proof}
Again we look for a solution $B$ of the form
\begin{equation}
\label{pB} B(x_1, x_2, x_3) = t_1\cdot x_1 + t_2\cdot x_2 + t_3\cdot x_3 +
t_0\,,
\end{equation}
where $t_i=\frac{\partial B}{\partial x_i}$, $i=1,2,3$. The integral curves of
the vector field $\Theta$ such that $d^2B(x)\Theta(x)=0$ are again segments of
straight lines
\begin{equation}
\label{pinvlinesB} x_1 dt_1 + x_2 dt_2 + x_3 dt_3 + dt_0=0\,.
\end{equation}
Each line is given by fixing two free parameters, for example $t_1$ and $t_2$.
Then we can rewrite equation~\eqref{pinvlinesB} defining lines $L_{t_1, t_2}$
foliating our domain in the following form:
\begin{equation}
\label{plinesB}
\begin{cases}
x_1 + \threeone\cdot x_3 + \zeroone = 0
\\
x_2 + \threetwo\cdot x_3 + \zerotwo =0
\end{cases}
\end{equation}

Differentiating~\eqref{phmB} in $t$ and setting $t=1$ we get
\begin{equation}
x_1 t_1 + p x_2 t_2 = p B\,. \label{pshortB}
\end{equation}
Whence,
\begin{equation}
\label{pplaneB} (p-1)t_1 x_1 + p t_3 x_3 + p t_0 =0\,.
\end{equation}

If we subtract this equation from the first equation of~\eqref{plinesB}
multiplied by $(p-1)t_1$, then we get
$$
\big(pt_3-(p-1)t_1\,\threeone\big)x_3+\big(pt_0-(p-1)t_1\,\zeroone\big)=0
$$
for every $x_3$. If we introduce the dual exponent $q=\frac{p}{p-1}$, then the
latter equation can be rewritten as follows
$$
\big(qt_3-t_1\,\threeone\big)x_3+\big(qt_0-t_1\,\zeroone\big)=0\,.
$$
Hence
\begin{equation}
\begin{cases}
\displaystyle q t_3-t_1\,\threeone=0\\
\displaystyle q t_0-t_1\,\zeroone=0\,.
\end{cases}
\end{equation}

From these ``PDE" we easily write down
\begin{equation}
\label{pAD}
\begin{cases}
t_3 = A(t_2) |t_1|^q\\
t_0 = D(t_2) |t_1|^q\,.
\end{cases}
\end{equation}
Then the equations of the extremal lines $L_{t_1,t_2}$ can be rewritten in the
form
\begin{equation}
\label{plinesAD}
\begin{cases}
x_1 + q |t_1|^{q-2}t_1 A x_3 + q |t_1|^{q-2}t_1 D=0
\\
x_2 + |t_1|^q A' x_3 + |t_1|^q D' =0
\end{cases}
\end{equation}

As before, we assume that our extremal line $L_{t_1,t_2}$ intersect the
boundary $\partial\Omega$ in a point $\zeta=\zeta(t_1,t_2)$ on the ``upper
lid'' $x_3=M$ and in a point $\xi=\xi(t_1,t_2)$ on the ``side'' $x_2=|x_1|^p$.
Then we have two pairs of equations~\eqref{plinesAD} asserting that our points
are on the line $L_{t_1,t_2}$
\begin{align}
\zeta_1 + q |t_1|^{q-2}t_1 A M + q |t_1|^{q-1}t_1 D=&0\label{pzeta1}
\\
\zeta_2 + |t_1|^q A' M + |t_1|^q D'=&0\label{pzeta2}
\\
\xi_1 + q |t_1|^{q-2}t_1 A \xi_3 + q |t_1|^{q-2}t_1 D=&0\label{pxi11}
\\
|\xi_1|^p + |t_1|^q A' \xi_3 + |t_1|^q D'=&0\label{pxi12}
\end{align}
and two boundary conditions~\eqref{pBCB} and~\eqref{pBCBUL}
\begin{align}
\frac1p\xi_1 t_1 + |\xi_1|^p t_2=&|\xi_1|^p\xi_3\label{pxi13}
\\
A |t_1|^q=&|\zeta_1|^p\,.\label{pA}
\end{align}

If we take $\zeta_1$ from~\eqref{pzeta1} and plug into~\eqref{pA} we get
\begin{equation}
A=q^p|AM+D|^p\,.\label{pA1}
\end{equation}
So, introducing a new function of $t_2$
\begin{equation}
a=q(AM+D)\label{pa}
\end{equation}
we can express both functions $A$ and $D$ in terms of $a$: \eqref{pA1} yields
\begin{equation}
A=|a|^p\label{pA2}
\end{equation}
and directly from the definition of $a$~\eqref{pa} we get
\begin{equation}
D=\frac1q a-M|a|^p\,.\label{pD}
\end{equation}

In result equations~\eqref{pxi11}, \eqref{pxi12}, and~\eqref{pxi13} can be
rewritten as follows
\begin{align}
\xi_1 + |t_1|^{q-2}t_1 a \big(1-qa|a|^{p-2}(M-\xi_3)\big)=&0\label{pxi21}
\\
|\xi_1|^p + \frac pq|t_1|^q a'\big(\frac1p-qa|a|^{p-2}(M-\xi_3)\big)=&0
\label{pxi22}
\\
\frac1p\xi_1 t_1 -|\xi_1|^p(\xi_3-t_2)=&0\,.\label{pxi23}
\end{align}

As before, we introduce the following new function
\begin{equation}
\eta=1-qa|a|^{p-2}(M-\xi_3)\,,\label{peta}
\end{equation}
and then equations~\eqref{pxi21} and~\eqref{pxi22} turn into
\begin{align}
\xi_1 + |t_1|^{q-2}t_1 a \eta=&0\label{pxi31}
\\
|\xi_1|^p + \frac pq |t_1|^q a'\big(\eta-\frac1q\big)=&0\label{pxi32}\,,
\end{align}
whence
\begin{equation}
a'\big(\eta-\frac1q\big)+\frac qp |a|^p|\eta|^p=0\,.\label{peta1}
\end{equation}

Since $a$ does not depend on $t_1$, the function $\eta$ (as a solution of this
equation) also depends only on $t_2$.

On the one side, from~\eqref{peta} we have
\begin{equation}
\xi_3=M-\frac{1-\eta}{qa|a|^{p-2}}\,,\label{pxi3a}
\end{equation}
on the other side~\eqref{pxi23} together with~\eqref{pxi31} yield
\begin{equation}
\xi_3=t_2+\frac{\xi_1t_1}{p|\xi_1|^p}=t_2-\frac{a\eta}{p|a|^p|\eta|^p}\,.
\label{pxi3b}
\end{equation}
So, excluding $x_3$ we get
$$
t_2-\frac{a\eta}{p|a|^p|\eta|^p}=M-\frac{1-\eta}{qa|a|^{p-2}}\,.
$$
Differentiating in $t_2$ the latter equation we obtain
$$
1+\frac{a'\eta+\eta'a}{q|a|^p|\eta|^p}=\frac{\eta'}{qa|a|^{p-2}}+\frac{p-1}q
\cdot\frac{(1-\eta)a'}{|a|^p}
$$
and then we multiply this equality by the common denominator and replace it
using~\eqref{peta1} by $-pa'(\eta-\frac1q)$:
$$
-pa'(\eta-\frac1q)+a'\eta+\eta'a=\eta'a|\eta|^p+(p-1)(1-\eta)a'|\eta|^p\,,
$$
or
\begin{equation}
\label{paeta} (p-1)(|\eta|^p-1)(\eta-1) a' = a(|\eta|^p-1)\eta'\,.
\end{equation}

Thus, we come to the following equation
\begin{equation}
\label{pab1} (p-1)(\eta-1) a' =  a\eta'\,,
\end{equation}
which implies $\eta=1+C|a|^p$. Then~\eqref{pxi3a} yields
\begin{equation}
\xi_3(t_2)=M+\frac{C|a|^{p-1}}{qa|a|^{p-2}}= M+\frac{C}q\sign{a} =: c\,.
\label{px3}
\end{equation}
The latter expression is a constant, because $\sign{a}$ can take the only value
due to $\xi_3\le M$. As before, the natural choice of the constant $c$ is
$c=m$.

Recall that we have the following expression for $B$ (see~\eqref{pshortB})
\begin{equation}
\label{pshortB1} B = \frac1p t_1x_1+t_2x_2\,.
\end{equation}
Thus, to find an expression for $B$ we need to find $t_1$ and $t_2$ as
functions of a point $x$ running over our domain $\Omega$.

From~\eqref{peta} we have
$$
\eta=1-qa|a|^{p-2}(M-m)\,,
$$
hence~\eqref{pxi3b} yields
\begin{equation}
\label{pt2} t_2 =
m+\frac{a\eta}{p|a\eta|^p}=m+\frac{a-q(M-m)|a|^p}{p\big|a-q(M-m)|a|^p\big|^p}\,.
\end{equation}
This gives us the desired $a$ as a function of $t_2$, but it is clear that $a$
is more convenient parameter than $t_2$. We shall express all other functions
of $t_2$ in terms of $a$ and look for $a$ as a function of $x\in\Omega$. To
this aim we return to the equations of the extremal lines~\eqref{plinesAD}
rewriting them in terms of $a$. From~\eqref{pt2} we have
$$
\frac{dt_2}{da}=-\frac1{q|a\eta|^p}\cdot\frac{d}{da}(a\eta)=
-\frac{1-pq(M-m)|a|^{p-2}a}{q\big|a-q(M-m)|a|^p\big|^p}
$$
therefore
$$
a'=-\frac{qa\big|a-q(M-m)|a|^p\big|^p}{a-pq(M-m)|a|^p}
$$
and
$$
A=|a|^p\,,A'=p|a|^{p-2}aa'=
-pq|a|^p\,\frac{\big|a-q(M-m)|a|^p\big|^p}{a-pq(M-m)|a|^p}\,,
$$
$$
D=\frac1q a-M|a|^p\,,D'=\frac1q\big(1-pqM|a|^{p-2}a\big)a'=
$$
$$
-\frac{\big(a-pqM|a|^p\big)\big|a-q(M-m)|a|^p\big|^p}{a-pq(M-m)|a|^p}\,.
$$

Now equation~\eqref{plinesAD} of the line $L_{t_1, t_2}$ becomes
\begin{equation}
\label{pline}
\begin{cases}
\displaystyle x_1 = -|t_1|^{q-2}t_1\,\big(a-q(M-x_3)|a|^p\big)\,,
\rule[-15pt]{0pt}{15pt}
\\
\displaystyle x_2 =
|t_1|^q\frac{\big|a-q(M-m)|a|^p\big|^p\big(a-pq(M-x_3)|a|^p\big)}{a-pq(M-m)|a|^p}
\,.
\end{cases}
\end{equation}
To get a more symmetrical expression, as before, we take a more geometrical
parameter $\xi_1$ instead of $t_1$, where $\xi_1$ is the first coordinate of
$\xi=(\xi_1,|\xi_1|^p,m)$, the point where the side surface $\{x_2=|x_1|^p\}$
is intersected by our extremal line. In term of $a$ and $\xi_1$
equations~\eqref{pline} turns into
\begin{equation}
\label{pline1}
\begin{cases}
\displaystyle x_1 = \frac{a-q(M-x_3)|a|^p}{a-q(M-m)|a|^p}\,\xi_1\,,
\rule[-20pt]{0pt}{20pt}
\\
\displaystyle x_2 = \frac{a-pq(M-x_3)|a|^p}{a-pq(M-m)|a|^p}\,|\xi_1|^p\,.
\end{cases}
\end{equation}

These equations immediately supply us with an expression for $a$, namely and
$a=a(x)=$ is a root of the following equation
\begin{equation}
\label{pcubic} s:= \frac{|x_1|^p}{x_2} = \left|
\frac{a-q(M-x_3)|a|^p}{a-q(M-m)|a|^p} \right|^p
\frac{a-pq(M-m)|a|^p}{a-pq(M-x_3)|a|^p}\,.
\end{equation}

To determine which of possible roots of the equation~\eqref{pcubic} gives us
the desired value of $a$, we investigate the above function $s=s(a)$ defined
by~\eqref{pcubic} as a function of the parameter $a$, all other parameters
assuming to be fixed.

First of all we note that the extremal line~\eqref{pline1} intersects the plane
$\{x_3=M\}$ at the point
\begin{equation}
\zeta=(\zeta_1,\zeta_2,M)\,,\quad \zeta_1= \frac{a\xi_1}{a-pq(M-x_3)|a|^p}\,,
\quad \zeta_2 = \frac{a|\xi_1|^p}{a-pq(M-x_3)|a|^p}\,. \label{pzeta}
\end{equation}
Since $\zeta_2\ge0$, we have the first restriction for $a$:
\begin{equation}
\label{pdoma} a<\Big(\frac1{pq(M-m)}\Big)^{q-1}\,.
\end{equation}
The behavior of $s(a)$ on the semi-axis~\eqref{pdoma} is the same for all
values of other parameters: on the negative half-line it monotonously increases
from $\big(\frac{M-x_3}{M-m}\big)^{p-1}$ till $1$, and on the interval
$[0,\big(\frac1{pq(M-m)}\big)^{q-1}]$ it monotonously decreases from $1$ to
$0$. Therefore, for a given $s$, $0\le s\le1$, we have one or two solutions
of~\eqref{pcubic} satisfying~\eqref{pdoma}: for all $s$ there exists a positive
solution $a=a^+$ and for $s>\big(\frac{M-x_3}{M-m}\big)^{p-1}$ there exists a
negative solution $a=a^-$ as well. We already know that the solution $a=a^+$
corresponds to the function $\Bel_{\max}$, and the solution $a=a^-$ corresponds
to the function $\Bel_{\min}$.

To write down the expression for $B$ we use~\eqref{pshortB}, then we express
$x_1$ using the first equation of~\eqref{pline}, and by means of the second
equation of~\eqref{pline} we replace the expression of $|t_1|^q$. For $t_2$ we
use~\eqref{pt2}. In result we obtain
\begin{multline*}
B=\frac1p x_1t_1+x_2t_2=-\frac1p|t_1|^q\big(a-q(M-x_3)|a|^p\big)+x_2t_2=
\rule[-20pt]{0pt}{20pt}
\\
-\frac{\big(a-q(M-x_3)|a|^p\big)\big(a-pq(M-m)|a|^p\big)}
{p\big|a-q(M-m)|a|^p\big|^p\big(a-pq(M-x_3)|a|^p\big)}x_2+
x_2\Big(m+\frac{a-q(M-m)|a|^p}{p\big|a-q(M-m)|a|^p\big|^p}\Big)=
\rule[-25pt]{0pt}{25pt}
\\
\frac{a|a|^p(x_3-m)x_2}{\big|a-q(M-m)|a|^p\big|^p\big(a-pq(M-x_3)|a|^p\big)}
+mx_2=\rule[-25pt]{0pt}{25pt}
\\
\frac{(x_3-m)x_2}{\big|1-q(M-m)a|a|^{p-2}\big|^p\big(1-pq(M-x_3)a|a|^{p-2}\big)}
+mx_2\,.
\end{multline*}
As it was already mentioned, here we choose the solution $a=a^+$
of~\eqref{pcubic} for $B_{\max}$ and $a=a^-$ for $B_{\min}$. 


\begin{theorem}
\label{pbcmin} Consider
\begin{equation}
\label{peqcet1min} B(x_1,x_2,x_3;m,M)=
\frac{(x_3-m)x_2}{\bigl[1-2a(M-m)\bigr]^2\bigl[1-4a(M-x_3)\bigr]}+mx_2 \,.
\end{equation}
We shall denote this function by $B_{\max}$ if $a=a^+(x)$ is the unique
positive solution of the equation
$$
\frac{|x_1|^p}{x_2} = \left| \frac{a-q(M-x_3)|a|^p}{a-q(M-m)|a|^p} \right|^p
\frac{a-pq(M-m)|a|^p}{a-pq(M-x_3)|a|^p}
$$
from the interval $[0,\big(\frac1{pq(M-m)}\big)^{q-1}]$, and this function will
be denoted by $B_{\min}$ if $a=a^-(x)$ is the unique negative solution of the
same equation. The domain of $B_{\max}$ is
$$
\Dom(B_{\max})=\{x\colon x_2\ge|x_1|^p,\,m\le x_3\le M\}\,,
$$
the domain of $B_{\min}$ is
$$
\Dom(B_{\min})=\{x\colon |x_1|^p\le x_2
\le\big(\frac{M-m}{M-x_3}\big)^{p-1}|x_1|^p,\,m\le x_3\le M\}\,.
$$
Then
$$
\Bel_{max}(x;m,M) =
\begin{cases}
B_{\max}(x;m,M)\quad& m<x_3\le M \,,\rule[-15pt]{0pt}{15pt}
\\
\qquad mx_2&\quad x_3=m\,.
\end{cases}
$$
and
$$
\Bel_{min}(x;m,M) =
\begin{cases}
B_{\min}(x;m,M)\quad&\displaystyle M-(M-m)\frac{|x_1|^q}{x_2^{q-1}}\le x_3\le
M\,, \rule[-15pt]{0pt}{15pt}
\\
\qquad mx_2&\displaystyle m\le x_3\le M-(M-m)\frac{|x_1|^q}{x_2^{q-1}}\,.
\end{cases}
$$
\end{theorem}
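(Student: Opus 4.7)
The plan is to mirror the two-sided argument carried out for $p=2$ in Sections \ref{Bellsm}--\ref{Bellbig}. For $B_{\max}$ one proves concavity of the candidate on $\Dom(B_{\max})$, upgrades it to a \emph{main inequality} with an additive correction in $x_3$, and then runs the dyadic Green's formula of Lemma \ref{tree} to conclude $B_{\max}\ge\Bel_{\max}$; the reverse inequality $B_{\max}\le\Bel_{\max}$ comes from an explicit self-similar extremal sequence built along the foliation $L_{t_1,t_2}$. For $B_{\min}$ the same scheme runs with convexity replacing concavity, infimum replacing supremum, and an extremal sequence built along the negative-$a$ foliation.

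First I would record the differential data: from \eqref{pshortB1}, \eqref{pt2} and the first line of \eqref{pline}, the partial derivatives $t_i=\partial B/\partial x_i$ are explicit rational functions of $a,x_1,x_2,x_3$, and implicit differentiation of \eqref{pcubic} gives $\partial a/\partial s$ and $\partial a/\partial x_3$ with controlled signs on the two branches ($a^+$ gives $\partial a/\partial s\ge 0$, $a^-$ gives $\partial a/\partial s\le 0$). The Monge--Amp\`ere construction already guarantees $\det d^2B\equiv 0$ and that $d^2B$ annihilates the direction of $L_{t_1,t_2}$; so, just as in the matrix decomposition $M=N+L$ used in Theorem \ref{conc}, it is enough to check the sign of $(d^2B)_{11}$ and observe that the relevant $2\times 2$ block has rank one. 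The key positivity facts are
\[
1-q(M-x_3)a|a|^{p-2}>0, \qquad 1-pq(M-x_3)a|a|^{p-2}>0,
\]
which hold on $\Dom(B_{\max})$ because $a\in[0,(pq(M-m))^{-(q-1)}]$ and $x_3\le M$; the same quantities have definite sign on $\Dom(B_{\min})$ for $a=a^-<0$. The \emph{main inequality}
\[
B_{\max}(x)-\half\bigl(B_{\max}(x^+)+B_{\max}(x^-)\bigr)\ge |x_1|^p\bigl(x_3-\half(x_3^++x_3^-)\bigr)
\]
then follows, exactly as in \eqref{MIBC}, from concavity together with the bound $\partial B/\partial x_3\ge |x_1|^p$, which reads off \eqref{pline} as $t_3=|x_1|^p/[1-pq(M-x_3)a|a|^{p-2}]$ with denominator in $(0,1]$. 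Feeding this main inequality into Lemma \ref{tree} with
\[
f(\sigma):=B_{\max}\bigl(\av{\phi}{I_\sigma},\av{|\phi|^p}{I_\sigma},M(\sigma);m,M\bigr)
\]
reproduces the dyadic Green-type summation of Section \ref{Bellsm} and yields $\Bel_{\max}\le B_{\max}$; the analogous estimate with the inequalities reversed (and Fatou replaced by reverse Fatou on bounded quantities) gives $\Bel_{\min}\ge B_{\min}$ on $\Dom(B_{\min})$.

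For $B_{\max}\le \Bel_{\max}$ I would repeat the geometric construction of Section \ref{Bellbig}: the $L^p$ analog of Theorem \ref{foliupper} says that the projections of $L_{t_1,t_2}$ onto the upper lid are tangent to the family $\{x_2=A|x_1|^p\}$, and this tangency is what justifies a self-similar $\phi_n$ of the form \eqref{phin} with constants $c_n,d_n$ now determined by $\av{\phi_n}{I_0}=x_1$ and $\av{|\phi_n|^p}{I_0}=x_2$. Passing to the limit $n\to\infty$ along the extremal line through $x$ gives the sharp value $B_{\max}(x;m,M)$, first on the upper lid $x_3=M$ and then at a general interior point by linearity of $B_{\max}$ along each $L_{t_1,t_2}$. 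The symmetric construction along the $a^-$-foliation, run on the smaller domain $\Dom(B_{\min})$, produces minimizing sequences and yields $\Bel_{\min}\le B_{\min}$. The main obstacle is the sign verification of the Hessian for general $p>1$: because \eqref{pcubic} is transcendental in $a$, the partial derivatives of $a(x)$ no longer simplify as in the cubic case, and the cleanest route is to keep working with the auxiliary variable $\eta=1-qa|a|^{p-2}(M-x_3)$ of \eqref{peta} so that the rank-one degeneracy of the $2\times 2$ principal block is preserved in closed form.
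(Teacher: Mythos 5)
Your strategy coincides with the paper's own stated plan: after deriving the candidate, the paper declines to prove Theorem~\ref{pbcmin}, remarking only that the proof ``follows almost verbatim the lines of the proof of our Theorems~\ref{bc}, \ref{bcmin} above.'' Your outline---concavity of $B_{\max}$ via a decomposition mirroring $M=N+L$ from Theorem~\ref{conc}, the main inequality with the $x_3$-correction, Lemma~\ref{tree}, and self-similar extremizers along the $L_{t_1,t_2}$ foliation as in Section~\ref{Bellbig}---is exactly that template, so at this level the proposal matches the paper.

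One concrete slip: you read off $t_3$ as $|x_1|^p\big/\bigl[1-pq(M-x_3)a|a|^{p-2}\bigr]$, but from $t_3=A\,|t_1|^q=|a|^p|t_1|^q$ together with the first line of \eqref{pline} one obtains
\begin{equation*}
t_3 \;=\; \frac{|x_1|^p}{\bigl|1-q(M-x_3)\,a|a|^{p-2}\bigr|^{\,p}}\,,
\end{equation*}
i.e.\ the coefficient is $q$ rather than $pq$, and the denominator carries a $p$-th power (this specializes for $p=2$ to $[1-2a(1-x_3)]^2$, consistent with the Hessian in Theorem~\ref{conc}; the bare first power in \eqref{t1t2t3} is itself a typo in the paper). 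The bound $t_3\ge|x_1|^p$ you need for the main inequality is still true, since $1-q(M-x_3)a|a|^{p-2}\in(0,1]$ on $\Dom(B_{\max})$, so the error is benign but should be corrected before the Green's-formula step. Beyond that, you rightly flag but do not discharge the two real technical points: the sign of the $L^p$ Hessian (the $2\times2$ principal block no longer reduces to the cubic computation, and the rank-one cancellation must be re-derived, most cleanly in the $\eta$-variable of \eqref{peta}), and the reverse-Fatou/domination needed for $\Bel_{\min}\ge B_{\min}$, which requires exhibiting an integrable majorant rather than invoking it. Since the paper supplies neither of these details, your proposal is at the same level of completeness as the paper's own treatment of this theorem.
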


\end{proof}

\bigskip

The proof of this theorem follows almost verbatim the lines of the proof of our Theorems \ref{bc}, \ref{bcmin} above. So, we are not going to prove it here. Instead, we wish to show that our results encompass a slightly more general situation than it could have been thought.

\bigskip

\section{Trees.}
\label{trees}

As in \cite{M} we let $(X,\mu_0)$ be a nonatomic probability space. Two measurable subsets $A,B$ of $X$ will be called almost disjoint if $\mu_0(A\cap B) =0$. Then we give the following 

\medskip

\noindent{\bf Definition.} A set $\mathcal{T}$ is called  a tree if the following conditions are satisfied 
\begin{itemize}
\item $X\in \mathcal{T}$ and for every $I\in \mathcal{T}$ we have $\mu_0(I)>0$.
\item For every $I \in \mathcal{T}$ there corresponds a  finite or countable subset $C(I)\subset \mathcal{T}$ containing at least two elements such that:

a) the elements of $C(I)$ are pairwise almost disjoint,

b) $I=\cup_{J\in C(I)} J$\,.
\item $\mathcal{T}= \cup_{m\ge 0} \mathcal{T}_m$, where $\mathcal{T}_0=\{X\}$,
$\mathcal{T}_{m+1} = \cup_{I\in \mathcal{T}_m} C(I)$.
\item We have $\lim_{m\rightarrow\infty} \sup_{I\in \mathcal{T}_m}\mu_0(I)=0$.
\end{itemize}

\medskip

For any tree $\mathcal{T}$ we define its exceptional set $E= E(\mathcal{T})$ as $$
E= \cup_{I\in \mathcal{T}}\cup_{J_1,J_2 \in C(I), J_1\neq J_2} J_1\cap J_2\,.
$$
It is clear that $E$ has $\mu_0$ measure zero.

An easy induction shows that each $\mathcal{T}_m$ consists of pairwise almost disjoint sets whose union is $X$. Moreover, if $x\in X\setminus E$, then for each $m$ there exists exactly one $I_m$ in $\mathcal{T}_m$ containing $x$. For every $m>0$ there exists a $J\in \mathcal{T}_{m-1}$ such that $I_m\in C(J)$.
Then of course $J=I_{m-1}$. Hence the set $A(x) = \{ I\in \mathcal{T}: x\in I\}$ forms a chain of nested elements of $\mathcal{T}_m, m=0,1,...$. Inclusions are all strict. From this remark it clear that if $I,J\in \mathcal{T}$ and $I\cap J\cap(X\setminus E)$ is nonempty then $I\subset J$ or $J\subset I$. In particular, for any $I,J\in \mathcal{T}$  we have either $\mu_0(I\cap J) =0$ or 
one of them is contained in the other. We denote by $\mathcal{T}(J)$ all elements of $\mathcal{T}$ that are subsets of $J$.
Also for any $I\in \mathcal{T}$ put
$$
\av{\phi}I := \frac1{\mu_0(I)}\int_I \phi\,d\mu_0\,.
$$

Given any tree $\mathcal{T}$ consider the collection $\alpha=\{\alpha_I\}_{I\in \mathcal{T}}$
of non-negative numbers. Let $\mu$ be another measure on $X$.
We can introduce the tree Bellman function:

\begin{multline}
\Bel_{CET}^{\mathcal{T}}(x_1,x_2,x_3;m,M):=\sup\Big\{\int_X
|\phi(t)|^2\,d\mu(t)\,+\!\!\sum_{J\in \mathcal{T}}
 |\,\av{\phi}J|^2 \alpha_J\colon
\\
\av{\phi}X=x_1,\quad \av{\phi^2}X=x_2,\quad
\Bigl(\mu(X)\,+\!\!\sum_{J\in \mathcal{T}}\alpha_J\Bigr)=x_3,
\\
m\mu_0(J)\le\mu(J)\,+\!\!\sum_{\ell\in \mathcal{T}(J)}\alpha_{\ell}\le M\mu_0(J)\quad\forall J\in
\mathcal{T}\Big\}\,. \label{cetbel}
\end{multline}

Independently of the tree the same Theorem \ref{bc} holds:

\begin{theorem}
\label{bctree} Consider
\begin{equation}
\label{eqcet1tree} B(x_1,x_2,x_3;m,M)=
\frac{(x_3-m)x_2}{\bigl[1-2a(M-m)\bigr]^2\bigl[1-4a(M-x_3)\bigr]}+mx_2 \,,
\end{equation}
where $a= a(x)$ is the unique solution of the cubic equation
\begin{equation}
\label{cubictree}
\frac{x_1^2}{x_2} = \left[  \frac{1-2a(M-x_3)}{1-2a(M-m)} \right]^2
\frac{1-4a(M-m)}{1-4a(M-x_3)}
\end{equation}
on the interval $[0,\frac1{4(M-m)}]$. Then
$$
\Bel_{CET}^{\mathcal{T}}(x;m,M) =
\begin{cases}
B(x;m,M)\quad& x_1^2 \le x_2,\quad m<x_3\le M
\\
\qquad mx_2 &x_1^2 \le x_2,\quad x_3=m.
\end{cases}
$$
\end{theorem}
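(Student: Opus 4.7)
The plan is to copy the two-sided strategy of Sections \ref{Bellsm} and \ref{Bellbig} verbatim, observing that the two principal ingredients --- concavity of $B$ on $\Omega$ (Theorem \ref{conc}) and the bound $B_{x_3}\ge x_1^2$ on $\Omega$ (Theorem \ref{Bx3}) --- are properties of $B$ alone and do not refer to any tree. The only structural modification is that the main inequality \eqref{MIBC}, stated there for binary splits into equal halves, has to be promoted to splits over an arbitrary (finite or countable) collection of children $C(I)$ with weights $\lambda_J:=\mu_0(J)/\mu_0(I)$.

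First I would establish the following generalized main inequality: whenever $\bar x_i=\sum_j\lambda_j x_{i,j}$ for $i=1,2$, with $\lambda_j\ge 0$, $\sum_j\lambda_j=1$, and $\bar x_3\ge\sum_j\lambda_j x_{3,j}$, all points in $\Omega$, then
$$
B(\bar x)-\sum_j\lambda_j B(x_j)\ge \bar x_1^2\Bigl(\bar x_3-\sum_j\lambda_j x_{3,j}\Bigr).
$$
This will follow in two steps exactly as in the derivation of \eqref{MIBC}: concavity of $B$ applied to the convex combination $(\bar x_1,\bar x_2,\sum_j\lambda_j x_{3,j})=\sum_j\lambda_j x_j$ handles the case of equality on the third coordinate, and integrating $B_{x_3}\ge x_1^2$ in $x_3$ raises the third coordinate up to $\bar x_3$. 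Setting $f(I):=B(\av{\phi}I,\av{\phi^2}I,M(I))$ with $M(I):=\mu_0(I)^{-1}(\mu(I)+\sum_{J\in\mathcal{T}(I)}\alpha_J)$, and using the identities $\av{\phi^k}I=\sum_{J\in C(I)}\lambda_J\av{\phi^k}J$ together with $M(I)-\sum_{J\in C(I)}\lambda_J M(J)=\alpha_I/\mu_0(I)\ge 0$, the generalized inequality translates to the tree superharmonicity
$$
\mu_0(I)f(I)\ge \sum_{J\in C(I)}\mu_0(J)f(J)+\av{\phi}I^2\alpha_I.
$$
Iterating this down from $X$ through $\mathcal{T}_m$, using the pointwise bound $B(x_1,x_2,x_3)\ge x_2 x_3$ at level $m$, and passing to the limit $m\to\infty$ via the refinement condition $\sup_{I\in\mathcal{T}_m}\mu_0(I)\to 0$, martingale convergence for the filtration generated by $\mathcal{T}_m$, and Fatou's lemma, I obtain $B(x)\ge\int_X\phi^2\,d\mu+\sum_{I\in\mathcal{T}}\av{\phi}I^2\alpha_I$; taking the supremum over admissible $(\phi,\mu,\alpha)$ yields $B\ge\Bel^{\mathcal{T}}_{CET}$.

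The main obstacle is the reverse inequality, since the extremal sequence in Section \ref{Bellbig} exploits the specific dyadic structure of $D(I_0)$. The plan is to reproduce that construction inside the arbitrary tree $\mathcal{T}$ by exploiting the nonatomicity of $\mu_0$ together with the refinement condition $\sup_{I\in\mathcal{T}_m}\mu_0(I)\to 0$. Inductively, at each stage one can group the descendants at a sufficiently deep subsequent level of a given tree element into two almost-disjoint clusters whose total $\mu_0$-masses approximate to within any prescribed error any target ratio; this will let me build a nested binary ``pseudo-tree'' inside $\mathcal{T}$ whose masses approximate the ratios $2^{-k},\,2^{-n}$ appearing in the dyadic template of \eqref{phin}. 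One then places $\alpha$-masses analogous to $\alpha^n_I$ from Section \ref{Bellbig} on the elements of this pseudo-tree and defines $\phi_n$ piecewise constant with the constants $c_n,d_n$ from \eqref{cn}; the estimate \eqref{geq2} transfers, with the discrepancies from inexact ratios absorbed into $\varepsilon_n$. The extension from $x_3=1$ to general $x_3\in(m,M]$ and to the boundary case $x_3=m$ is handled exactly as at the end of Section \ref{Bellbig}, by linearity of $B$ along the extremal lines $L_{t_1,t_2}$.
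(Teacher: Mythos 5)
Your plan for the direction $B\ge\Bel_{CET}^{\mathcal{T}}$ matches the paper's intent; the paper merely says this part is ``exactly the same as before,'' and you correctly identify the one point that actually needs attention, namely that the two-point main inequality \eqref{MIBC} must be promoted to the general convex-combination form over $C(I)$ with weights $\lambda_J=\mu_0(J)/\mu_0(I)$. Your derivation of that generalized inequality (concavity of $B$ plus the pointwise bound $\partial B/\partial x_3\ge x_1^2$ integrated in the third variable) is sound, and the telescoping through $\mathcal{T}_m$, the use of $B(x_1,x_2,x_3)\ge x_2x_3$ at the final level, and the passage to the limit via the refinement hypothesis $\sup_{I\in\mathcal{T}_m}\mu_0(I)\to0$ together with Fatou are what the paper has in mind.

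For the reverse inequality $\Bel_{CET}^{\mathcal{T}}\ge B$ you take a genuinely different route than the paper. The paper invokes Melas's Lemma~\ref{melasLemma}, which for every $I\in\mathcal{T}$ and every $\alpha\in(0,1)$ produces a subfamily $F(I)\subset\mathcal{T}$ of pairwise almost disjoint tree elements with \emph{exactly} prescribed total measure $(1-\alpha)\mu_0(I)$; the self-similar construction of Section~\ref{Bellbig} is then reproduced verbatim with no approximation error, because each member of $F(I)$ is again a bona fide tree element to which the construction can be applied recursively. Your proposal instead builds approximate binary pseudo-trees by clustering deep descendants, absorbing the discrepancies into $\varepsilon_n$. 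This is in the right spirit --- in effect you are re-deriving a weak, approximate form of Melas's lemma from nonatomicity and the refinement condition --- and it has the virtue of being self-contained. But it costs you extra bookkeeping that the paper avoids: your clusters are unions of tree elements rather than single elements, so recursing through them requires partitioning each constituent element compatibly, and the errors accumulate over the $n$ levels of the construction before being compared with the $\varepsilon_n$ slack in \eqref{geq2}. None of this is fatal --- the constants $c_n,d_n$ depend continuously on the ratios, so small perturbations at each stage do wash out --- but it would need to be spelled out more carefully than a single sentence to be convincing, whereas citing Lemma~\ref{melasLemma} removes the issue entirely. If you already know that lemma (or are willing to prove it once, by a greedy/refinement argument using exactly the nonatomicity you invoke), the argument becomes literally the Section~\ref{Bellbig} construction with ``dyadic interval'' replaced by ``member of the chosen subfamily,'' which is what the paper intends.
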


We can introduce the tree Bellman function also for $p\neq 2, 1<p<\infty$. Again the result for the trees  will not depend on the tree, it coincides with dyadic result. The proofs of $\Bel_{CET}^{\mathcal{T}}(x;m,M) \le B(x;m,M)$ is exactly the same as before, the proof of $\Bel_{CET}^{\mathcal{T}}(x;m,M) \ge B(x;m,M)$ is not too difficult either if one uses the following lemma from \cite{M}.

\begin{lemma}
\label{melasLemma}
For every $I\in \mathcal{T}$ and every $\alpha$ such that $0<\alpha <1$ there exists a subfamily $F(I)\subset \mathcal{T}$ consisiting of pairwise almost disjoint subsets of $I$ such that
$$
\mu_0(\cup_{J\in F(I)} J ) = (1-\alpha) \mu_0(I)\,.
$$
\end{lemma}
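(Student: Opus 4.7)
The strategy will be a single greedy descent along the subtree $\mathcal{T}(I)$. Writing $T := (1-\alpha)\mu_0(I)$, which lies strictly between $0$ and $\mu_0(I)$, the plan is to produce a (possibly countable) pairwise almost disjoint subfamily $F\subset\mathcal{T}(I)$ with $\mu_0(\bigcup F)=T$, by choosing at each node which children to keep whole and which to split further.

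I would set up an inductive construction as follows. Initialize $J_0 := I$, $T_0 := T$, and $F := \emptyset$, maintaining the invariant $0 < T_n < \mu_0(J_n)$. Given $J_n$, enumerate its children $C(J_n) = \{K_1,K_2,\ldots\}$ and traverse the list with a running sum $M_n$ initially zero: for each $i$, if $M_n + \mu_0(K_i) \le T_n$ adjoin $K_i$ to $F$ and replace $M_n$ by $M_n + \mu_0(K_i)$; otherwise skip $K_i$. Once the list is exhausted, if $M_n = T_n$ terminate; otherwise pick any skipped child to serve as $J_{n+1}$, set $T_{n+1} := T_n - M_n$, and iterate. I would check that in the non-terminating case some child must be skipped (else $M_n = \mu_0(J_n) > T_n$ contradicts $M_n \le T_n$) and that any skipped $J_{n+1}$ satisfies $\mu_0(J_{n+1}) > T_n - M_n = T_{n+1} > 0$, so the invariant passes to level $n+1$.

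Next I would verify the two essential properties of $F$. Within stage $n$ the additions are distinct children of $J_n$, hence pairwise almost disjoint; additions made at any later stage lie inside $J_{n+1}$, which is a sibling of the stage-$n$ additions and so almost disjoint from them, giving pairwise almost-disjointness of $F$ overall. The identity $M_n = T_n - T_{n+1}$ telescopes to $\sum_{n<N} M_n = T_0 - T_N$; if the algorithm halts at a finite $N$, then $T_N = 0$ and the total measure of $F$ is exactly $T$.

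The main obstacle is the possibility of an infinite descent, producing a chain $J_0\supset J_1\supset\ldots$ with $0<T_n<\mu_0(J_n)$ for every $n$. At this point I would invoke the last axiom of a tree: since $J_n$ sits at level at least $n$ above the level of $J_0$, the hypothesis $\sup_{J\in\mathcal{T}_m}\mu_0(J)\to 0$ forces $\mu_0(J_n)\to 0$, hence $T_n\to 0$. The telescoping then yields $\mu_0(\bigcup F)=\sum_{n\ge 0} M_n = T_0 = T$ exactly. The delicate point is this last step: the greedy procedure by itself only guarantees $\mu_0(\bigcup F)\le T$, and it is the vanishing of cell sizes deep in the tree that pins the greedy total to exact equality with $T$.
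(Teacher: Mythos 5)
Your argument is correct. Note that the paper itself contains no proof of this lemma: it is imported verbatim from Melas \cite{M}, so there is no in-paper argument to compare against; your greedy descent along a chain $I=J_0\supset J_1\supset\cdots$ is essentially the standard (and, as far as I can tell, the intended) proof. The two steps that carry all the weight are exactly the ones you isolate: (i) the invariant $0<T_{n+1}<\mu_0(J_{n+1})$, which you correctly derive from the fact that a skipped child $K$ was skipped because $\mu_0(K)>T_n-M$ for some partial sum $M\le M_n$, and from $\sum_{K\in C(J_n)}\mu_0(K)=\mu_0(J_n)>T_n$ forcing at least one skip; and (ii) the shrinking-cells axiom, which squeezes $T_n\to0$ in the non-terminating case and upgrades the a priori inequality $\mu_0(\bigcup F)\le T$ to equality. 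Two small points deserve to be made explicit in a written version: first, that for a countable pairwise almost disjoint family the measure of the union equals the sum of the measures (this is what lets the telescoped sum $\sum_n M_n$ be identified with $\mu_0(\bigcup F)$); second, your indexing in the terminating case is off by one --- termination at stage $N$ means $M_N=T_N$, i.e.\ $T_{N+1}=0$, not $T_N=0$ --- but this is cosmetic and the telescoping closes up either way.
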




\begin{thebibliography}{XXX}
\bibitem[M]{M}{A. Melas},
{\em The Bellman functions of dyadic-like maximal operators and related
inequalities}, Adv. in Math., Vol.~192 ({\bf2005}), No.~2, pp.~310--340.
\bibitem[NT]{NT}{F. Nazarov, S. Treil},
{\em The hunt for Bellman function: applications to estimates of singular
integral operators and to other classical problems in harmonic analysis},
Algebra i Analiz, Vol.~8 ({\bf1997}), No.~5, pp.~32-162.
\bibitem[NTV1]{NTV1}{F. Nazarov, S. Treil, A. Volberg}, {\em  The Bellman functions and two-weight inequalities for Haar multipliers}, J. Amer. Math. Soc., {\bf 12} no. 4 (1999), 909--928.
\bibitem[NTV2]{NTV2}{F. Nazarov, S. Treil, A. Volberg}, {\em  Bellman function in Stochastic Optimal Control and Harmonic Analysis ( how our Bellman function got its name)}, Oper. Theory: Advances and Appl. {\bf 129} (2001), 393--424.
\bibitem[PVo]{PVo}{S. Petermichl, A. Volberg}, {\em Heating of the Ahlfors-Beurling operator: weakly quasiregular maps on the plane are quasiregular}, Duke Math. J., {\bf 112} (2002), no. 2, 281--305.
\bibitem[SlSt]{SlSt} {L. Slavin, A. Stokolos},
{\em The maxial operator on
$L^p(\R^n)$}, Preprint, 2006.
\bibitem[SV]{SV}{L. Slavin, V. Vasyunin},
{\em Sharp results in the integral-form
John--Nirenberg inequality}, Preprint, 2007; http://arxiv.org/abs/0709.4332.
\bibitem[V]{V}{V. Vasyunin},
{\em The sharp constant in the John--Nirenberg
inequality}, Preprint PDMI no.20, 2003;
http://www.pdmi.ras.ru/preprint/2003/index.html.
\bibitem[Vo]{Vo}{A. Volberg}, {\em Bellman approach to some problems in Harmonic Analysis}, Seminaire des Equations aux deriv\'ees partielles, Ecole Polytechnique, 2002, expos\'e XX, 1--14.
\end{thebibliography}
    \end{document}